\documentclass[reqno,12pt]{amsart}

\usepackage[utf8]{inputenc}
\usepackage{microtype}
\usepackage{amsfonts}
\usepackage{amsmath}
\usepackage{graphicx}
\usepackage{float}
\usepackage{color}
\usepackage{hyperref}
\usepackage{tikz-cd}


\usepackage{amssymb}
\usepackage{enumitem}
\usepackage{mathrsfs}

\usepackage{tikz}
\usetikzlibrary{topaths}
\usetikzlibrary{calc}

\usepackage{a4wide}

\usepackage{empheq}


\newtheorem{theorem}{Theorem}[section]
\newtheorem*{theorem*}{Theorem}

\newtheorem{lemma}[theorem]{Lemma}

\newtheorem{proposition}[theorem]{Proposition}
\newtheorem*{proposition*}{Proposition}
\newtheorem{corollary}[theorem]{Corollary}
\newtheorem*{corollary*}{Corollary}

\newtheorem{cit}[theorem]{Citation}
\newtheorem*{conjecture*}{Conjecture}

\newtheorem*{question*}{Question}

\newtheorem*{main:braided_nekr_inherit_finprops}{Theorem~\ref{thrm:braided_nekr_inherit_finprops}}
\newtheorem*{main:braided_roever_F_infty}{Theorem~\ref{thrm:braided_roever_F_infty}}

\theoremstyle{definition}
\newtheorem{definition}[theorem]{Definition}
\newtheorem{remark}[theorem]{Remark}
\newtheorem{example}[theorem]{Example}

\newcommand{\Z}{\mathbb{Z}}
\newcommand{\N}{\mathbb{N}}

\newcommand{\R}{\mathbb{R}}

\newcommand{\lk}{\operatorname{lk}}
\newcommand{\st}{\operatorname{st}}

\newcommand{\dlk}{\lk^\downarrow\!}
\newcommand{\dst}{\st^\downarrow\!}

\newcommand{\clone}{\kappa}
\newcommand{\symmclone}{\varsigma}
\newcommand{\braidclone}{\vartheta}
\newcommand{\tree}{\mathcal{T}}
\newcommand{\states}{\psi}
\newcommand{\splitting}{\Delta}
\newcommand{\altsplitting}{\Omega}
\newcommand{\merging}{\Upsilon}
\newcommand{\altmerging}{\Psi}
\newcommand{\disk}{D}
\newcommand{\surface}{\mathcal{S}}

\newcommand{\defeq}{\mathbin{\vcentcolon =}}

\DeclareMathOperator{\Aut}{Aut}
\DeclareMathOperator{\AAut}{AAut}

\DeclareMathOperator{\Symm}{Symm}
\DeclareMathOperator{\Grig}{Grig}
\DeclareMathOperator{\brGrig}{brGrig}
\DeclareMathOperator{\poly}{poly}

\DeclareMathOperator{\F}{F}

\DeclareMathOperator{\Stab}{Stab}

\DeclareMathOperator{\id}{id}

 \newcommand{\diskcpx}{{\mathbb {D}}}
 \newcommand{\CM}{{\mathcal {M}}} 

\newcommand{\Thomp}{\mathscr{T}}   

\newcommand{\Groupoid}{\mathscr{G}}
\newcommand{\Poset}{\mathscr{P}}
\newcommand{\Stein}{\mathscr{X}}
\newcommand{\dlmodel}{\mathscr{L}}

\newcommand{\brF}%
   {\operatorname{br}\!F}                 

\newcommand{\brT}%
   {\operatorname{br}\!T}                 

\newcommand{\brV}%
   {\operatorname{br}\!V}                 
   
\newcommand{\brAut}%
   {\operatorname{br}\!\Aut}                 
   
\newcommand{\brAAut}%
   {\operatorname{br}\!\AAut}                 
   
\newcommand{\brW}%
   {\operatorname{br}\!W}                 
   
\newcommand{\brR}%
   {\operatorname{br}\!R}
   
\numberwithin{equation}{section}


%
%

\begin{document}

\title{Braiding groups of automorphisms and almost-automorphisms of trees}
\date{\today}
\subjclass[2010]{Primary 20F65;   
                 Secondary 57M07} 

\keywords{Braid group, self-similar group, Thompson group, R\"over--Nekrashevych group, cloning system, finiteness properties}

\author[R.~Skipper]{Rachel Skipper}
\address{D\'epartement de math\'ematiques et applications, \'Ecole normale sup\'erieure, 75005 Paris, France}
\email{rachel.skipper@ens.fr}

\author[M.~C.~B.~Zaremsky]{Matthew C.~B.~Zaremsky}
\address{Department of Mathematics and Statistics, University at Albany (SUNY), Albany, NY 12222}
\email{mzaremsky@albany.edu}

\begin{abstract}
We introduce ``braided'' versions of self-similar groups and R\"over--Nekrashevych groups, and study their finiteness properties. This generalizes work of Aroca and Cumplido, and the first author and Wu, who considered the case when the self-similar groups are what we call ``self-identical''. In particular we use a braided version of the Grigorchuk group to construct a new group called the braided R\"over group, which we prove is of type $\F_\infty$. Our techniques involve using so called $d$-ary cloning systems to construct the groups, and analyzing certain complexes of embedded disks in a surface to understand their finiteness properties.
\end{abstract}

\maketitle
\thispagestyle{empty}

\section*{Introduction}

In this paper we introduce and study braided versions of self-similar groups of tree automorphisms and R\"over--Nekrashevych groups of tree almost-automorphisms. A subgroup of the group of automorphisms of an infinite rooted regular tree is called self-similar if it is ``built out of copies of itself'' in some sense (see Definition~\ref{def:self_sim}). The R\"over--Nekrashevych group associated to a self-similar group is the group of self-homeomorphisms of the boundary of the tree that locally ``look like'' the self-similar group (see Definition~\ref{def:nekr}). The first example, now called the R\"over group, was constructed by R\"over in \cite{roever99}, using the Grigorchuk group constructed by Grigorchuk in \cite{grigorchuk80,grigorchuk84}. In \cite{nekrashevych04} Nekrashevych constructed R\"over--Nekrashevych groups in general, starting with an arbitrary self-similar group.

The braided variants of R\"over--Nekrashevych groups we construct here were previously considered by Aroca--Cumplido \cite{aroca22} in the special case when the self-similar groups are what we call ``self-identical'' (Definition~\ref{def:self_identical}), and these examples were also studied by the first author and Wu in \cite{skipperwu}. These papers are  part of a large body of recent work devoted to ``braiding'' groups in the extended family of Thompson's groups. The original braided Thompson group $\brV$ was introduced independently by Brin \cite{brin07} and Dehornoy \cite{dehornoy06}, as a braided version of the classical Thompson group $V$. Thompson's group $F$ also has a braided counterpart $\brF$, first considered in \cite{brady08}. Thompson's group $T$ has a number of different ``braidings'', found, e.g., in \cite{funar08,funar11,witzel19}. Braided versions of the Brin--Thompson groups from \cite{brin04} were recently constructed by Spahn \cite{spahn}. Finally, Houghton's groups, which are related to Thompson's groups, have braided variants due to Degenhardt \cite{degenhardt00}. The braided Houghton groups along with the braided versions of $T$ from \cite{funar08,funar11} are also studied in \cite{genevois22}.

In this paper we ``braid'' arbitrary self-similar groups and arbitrary R\"over--Nekrashevych groups. In particular we get a new group that we call the \emph{braided R\"over group}, which is a braided variant of the R\"over group, i.e., the R\"over--Nekrashevych group specifically arising from the Grigorchuk group. (A torsion-free version of the Grigorchuk group that could be called a ``braided'' version was previously constructed by Grigorchuk in \cite{grigorchuk85}; the braided Grigorchuk group we construct here is slightly different.) To construct braided self-similar groups, we work inside an infinitely iterated wreath product of braid groups, and impose conditions arising from a ``braided wreath recursion'' on elements (see Definition~\ref{def:braided_self_sim}). To construct braided R\"over--Nekrashevych groups, we use the framework of $d$-ary cloning systems, developed by the authors and Stefan Witzel in \cite{witzel18,skipper21}: we use the braided wreath recursion to produce a $d$-ary cloning system associated to any braided self-similar group, and then the braided R\"over--Nekrashevych group is the resulting Thompson-like group that arises from the $d$-ary cloning system (see Definition~\ref{def:braided_ro_ne}).

As often happens with new Thompson-like groups, for example in many of the aforementioned references, it is of interest to understand their finiteness properties. A group is of \emph{type $F_n$} if it has a classifying space with finite $n$-skeleton, where a \emph{classifying space} is a connected CW-complex whose fundamental group is the group in question and whose higher homotopy groups are all trivial. Thus, type $\F_1$ is equivalent to finite generation and type $\F_2$ is equivalent to finite presentability. We say \emph{type $F_\infty$} for type $\F_n$ for all $n$. The classical Thompson's groups $F$, $T$, and $V$, and the Brin--Thompson groups $nV$, are all of type $\F_\infty$ \cite{brown84,brown87,fluch13}, as are all their aforementioned braided variants \cite{bux16,witzel19,genevois22,spahn}. There is also a ``ribbon braided'' variant of $V$ that is of type $\F_\infty$ \cite{thumann17}. The R\"over group along with some related R\"over--Nekrashevych groups are of type $\F_\infty$ as well \cite{farley15,belk16,skipper21}. In particular this includes R\"over--Nekrashevych groups arising from self-identical groups \cite{farley15}. R\"over--Nekrashevych groups also afforded the first known examples of simple groups of type $\F_{n-1}$ but not $\F_n$ for arbitrary $n$, in \cite{skipper19}. Here we analyze finiteness properties of braided R\"over--Nekrashevych groups, and prove two main results:

\begin{main:braided_nekr_inherit_finprops}
Let $G\le \brAut(\tree_d)$ be braided self-similar. If $G$ is of type $\F_n$ then so is $\brV_d(G)$.
\end{main:braided_nekr_inherit_finprops}

\begin{main:braided_roever_F_infty}
The braided R\"over group is of type $\F_\infty$.
\end{main:braided_roever_F_infty}

When $G$ is braided self-identical, Theorem~\ref{thrm:braided_nekr_inherit_finprops} was proved by the first author and Wu in \cite{skipperwu}, where it was proved that the converse also holds. We prove that the braided Grigorchuk group is (like the original Grigorchuk group) not finitely presentable (Proposition~\ref{prop:brGrig_not_fp}), so Theorem~\ref{thrm:braided_roever_F_infty} shows that the converse of Theorem~\ref{thrm:braided_nekr_inherit_finprops} is not true in general. We remark that it remains an interesting problem for $n\ge 2$ to find explicit examples of $G$ for which Theorem~\ref{thrm:braided_nekr_inherit_finprops} applies but the results in \cite{skipperwu} do not, i.e., $G$ is braided self-similar but not braided self-identical. (For an example when $n=1$, see Example~\ref{ex:ZwrZ} using $\Z\wr\Z$.) In another direction, for $G$ a braided self-similar group and $\pi(G)$ its corresponding (non-braided) self-similar group, it would be interesting to try and relate the finiteness properties of $G$ to those of $\pi(G)$, and the finiteness properties of $\brV_d(G)$ to those of $V_d(\pi(G))$; we leave this for future investigation.

This paper also rectifies a gap in the literature, namely we construct the so called Stein--Farley complex for a general $d$-ary cloning system. When $d=2$ this was done by Witzel and the second author in the original cloning systems paper \cite{witzel18}, but for general $d$-ary cloning systems, introduced in \cite{skipper21}, the Stein--Farley complex was only constructed for a special case involving self-similar groups. Here we officially construct the Stein--Farley complex for an arbitrary $d$-ary cloning system (Subsection~\ref{ssec:stein_farley}). The construction is straightforward, and works essentially by combining the ideas from the two aforementioned special cases, but had not technically been done before. These complexes are a key tool in proving Theorem~\ref{thrm:braided_nekr_inherit_finprops}.

In order to prove Theorem~\ref{thrm:braided_roever_F_infty}, we introduce a new type of complex defined on a surface, which we call the $(2,5/2)$-disk complex (see Definition~\ref{def:252}). A vertex of this complex is an isotopy class of an embedded disk, enclosing $2$ marked points in its interior and either $0$ or $1$ marked points in its boundary, and a collection of vertices span a simplex whenever the disks are pairwise disjoint or nested. The higher connectivity properties of the descending links in the Stein--Farley complex for the braided R\"over group turn out to be informed by those of the $(2,5/2)$-disk complex, and this complex seems to be of interest in its own right, given its connection to (braided versions of) the Grigorchuk and R\"over groups.

This paper is organized as follows. In Section~\ref{sec:auts} we recall the background on self-similar groups, and define braided self-similar groups. In Section~\ref{sec:aauts} we recall the background on $d$-ary cloning systems and R\"over--Nekrashevych groups, and define braided R\"over--Nekrashevych groups, including the braided R\"over group. In Section~\ref{sec:fin_props} we construct Stein--Farley complexes for arbitrary $d$-ary cloning systems (which has technically not been done before), and then focus on the case of braided R\"over--Nekrashevych groups to prove Theorem~\ref{thrm:braided_nekr_inherit_finprops}. Finally, in Section~\ref{sec:braided_roever_fin_props} we prove Theorem~\ref{thrm:braided_roever_F_infty}, that the braided R\"over group is of type $\F_\infty$.

\subsection*{Acknowledgments} We are grateful to Daniel Allcock, Jim Belk, and Xiaolei Wu for helpful discussions, to Anthony Genevois for pointing out the reference \cite{benli13}, and to the anonymous referee for some excellent suggestions. The first author is supported by NSF DMS--2005297 and the European Research Council (ERC) under the European Union’s Horizon 2020 research and innovation program (grant agreement No.725773). The second author is supported by grant \#635763 from the Simons Foundation.

\section{Braiding groups of automorphisms of trees}\label{sec:auts}

Let $d\in \N$ with $d\ge 2$, let $X$ be a set with $d$ elements, called an \emph{alphabet}, and let $X^*$ be the set of all finite words in $X$ (including the empty word, denoted $\varnothing$). The infinite rooted $d$-ary tree, denoted $\tree_d$, naturally has $X^*$ as its vertex set. The root is $\varnothing$, and given a vertex $v$ the \emph{children} of $v$ are the vertices of the form $vx$ for $x\in X$.

\begin{definition}[Automorphism]
An \emph{automorphism} of $\tree_d$ is a bijection $X^*\to X^*$ that preserves incidence (and so in particular fixes $\varnothing$). The group of all automorphisms of $\tree_d$ is denoted $\Aut(\tree_d)$.
\end{definition}

We will be interested in certain subgroups of $\Aut(\tree_d)$, called self-similar groups. A vast amount of information about self-similar groups can be found in \cite{nekrashevych05}. To define them we need to view $\Aut(\tree_d)$ as an infinitely iterated wreath product, namely
\[
\Aut(\tree_d) \cong S_d \wr_X(S_d \wr_X(S_d \wr_X\cdots)) \text{,}
\]
as we will now begin to explain.

Our convention for wreath products is that $S_d \wr_X G \defeq S_d \ltimes G^X$, i.e., the group doing the acting is written on the left. We write the subscript $X$ in $\wr_X$ to emphasize that this is the permutation wreath product coming from $S_d\defeq \Symm(X)$ acting on $X$. We may also sometimes identify $X$ with $\{1,\dots,d\}$ so that elements of $G^X$ can be conveniently written as tuples $(g_1,\dots,g_d)$. We will also sometimes write $S_d \wr_X^\infty S_d$ for $S_d \wr_X(S_d \wr_X(S_d \wr_X\cdots))$.

Now let us be more rigorous about infinitely iterated wreath products. Let $S_d \wr_X^n S_d$ be the $n$-times-iterated wreath product, e.g.,
\begin{align*}
S_d\wr_X^0 S_d &= S_d\\
S_d \wr_X^1 S_d &= S_d \wr_X S_d\\
S_d \wr_X^2 S_d &= S_d \wr_X (S_d \wr_X S_d) \text{,}
\end{align*}
and so forth. For each $n\in\N$ we have an epimorphism $S_d \wr_X^n S_d \to S_d \wr_X^{n-1} S_d$ given by ``forgetting the rightmost factor''; for example $S_d \wr_X^2 S_d \to S_d \wr_X^1 S_d$ is the map
\[
(\sigma,((\tau_1,(\upsilon_1^1,\dots,\upsilon_1^d)),\dots,(\tau_d,(\upsilon_d^1,\dots,\upsilon_d^d)))) \mapsto (\sigma,(\tau_1,\dots,\tau_d)) \text{.}
\]
This forms a projective system, and the infinitely iterated wreath product $S_d \wr_X^\infty S_d$ is the projective limit of this system. It is clear that this is isomorphic to $\Aut(\tree_d)$.  The point is that an automorphism of $\tree_d$ can be viewed as first shuffling the $d$ many children of the root, then independently for each child of the root shuffling its $d$ many children, then shuffling their children, and so on.

Thanks to this viewpoint we see that $\Aut(\tree_d)\cong S_d\wr_X\Aut(\tree_d)$. Hence, an element $f\in \Aut(\tree_d)$ can be decomposed as $f=\rho(f)(f_1,\dots,f_d)$, where
\[
\rho\colon \Aut(\tree_d)\to S_d
\]
is the natural epimorphism coming from the wreath product, and $f_i\in\Aut(\tree_d)$. This is called the \emph{wreath recursion}. This produces a function (not a homomorphism) $\states\colon \Aut(\tree_d)\to \Aut(\tree_d)^X$ sending $f$ to
\[
\states(f)\defeq (f_1,\dots,f_d)\text{,}
\]
so the wreath recursion of $f$ is $f=\rho(f)\states(f)$. For any subgroup $G\le\Aut(\tree_d)$ the image $\states(G)$ is a subset of $\Aut(\tree_d)^X$, and this leads us to the definition of self-similar:

\begin{definition}[Self-similar]\label{def:self_sim}
A subgroup $G\le \Aut(\tree_d)$ is called \emph{self-similar} if $\states(G)\subseteq G^X$.
\end{definition}

The easiest example of a self-similar subgroup of $\Aut(\tree_d)$ is given by a certain action of the symmetric group $S_d$ on $\tree_d$. This kind of example will come up a lot, so we will actually give it its own name:

\begin{definition}[Self-identical]\label{def:self_identical}
A subgroup $G\le \Aut(\tree_d)$ is called \emph{self-identical} if
\[
\states(g)=(g,\dots,g)
\]
for all $g\in G$.
\end{definition}

\begin{example}[Subgroups of the symmetric group]\label{ex:symm_self_sim}
For $\sigma\in S_d=\Symm(X)$ we can define an element of $\Aut(\tree_d)$, also denoted $\sigma$, by declaring that $\sigma$ sends the vertex $v=x_1\cdots x_k$ to the vertex $\sigma(v)\defeq \sigma(x_1)\cdots\sigma(x_k)$. Note that the wreath recursion of $\sigma$ is $\sigma=\sigma(\sigma,\dots,\sigma)$. In particular $\states(\sigma)=(\sigma,\dots,\sigma)$, and so viewing $S_d$ as a subgroup of $\Aut(\tree_d)$ in this way we see that $S_d$, and indeed any subgroup of $S_d$, is self-identical (hence self-similar). Every self-identical subgroup occurs in this way.
\end{example}

\begin{example}[Grigorchuk group]
The \emph{Grigorchuk group}, introduced by Grigorchuk in \cite{grigorchuk80}, is the subgroup $\Grig\le\Aut(\tree_2)$ generated by elements $\overline{a}$, $\overline{b}$, $\overline{c}$, and $\overline{d}$ defined by the following wreath recursions. (These are usually denoted by $a$, $b$, $c$, and $d$, but we will be writing those for the braided version, so we will write $\overline{a}$, $\overline{b}$, $\overline{c}$, and $\overline{d}$ here.) First, $\overline{a}=(1~2)(\id,\id)$, where we identify $X$ with $\{1,2\}$. Next, $\overline{b}=(\overline{a},\overline{c})$, $\overline{c}=(\overline{a},\overline{d})$, and $\overline{d}=(\id,\overline{b})$, where the lack of a symbol in front of the ordered pair indicates that $\overline{b}$, $\overline{c}$, and $\overline{d}$ fix the children of the root. The Grigorchuk group was the first example of a finitely generated group that has intermediate growth, and that is amenable but not elementary amenable \cite{grigorchuk84}.
\end{example}

\subsection{Braiding groups of automorphisms}\label{ssec:braid_aut}

Now we describe a braided version of all of the above. Let $B_d$ be the $d$-strand braid group. Via the standard projection $B_d\to S_d$, we get an action of $B_d$ on $X$. Hence we can consider finitely iterated wreath products $B_d\wr_X^n B_d$, take the projective limit, and get the infinitely iterated wreath product
\[
B_d\wr_X (B_d \wr_X(B_d \wr_X\cdots)) \text{,}
\]
which we may also write as $B_d \wr_X^\infty B_d$.

\begin{definition}[Braided $\Aut(\tree_d)$]
Call the above infinitely iterated wreath product the \emph{braided automorphism group of $\tree_d$}, denoted
\[
\brAut(\tree_d) \defeq B_d \wr_X^\infty B_d \text{.}
\]
\end{definition}

Note that $\brAut(\tree_d) \cong B_d\wr_X \brAut(\tree_d)$, and so just like with $\Aut(\tree_d)$ we get a ``braided wreath recursion'': any $f\in \brAut(\tree_d)$ decomposes as $f=\phi(f)(f_1,\dots,f_d)$, where
\[
\phi\colon \brAut(\tree_d)\to B_d
\]
is the natural epimorphism coming from the wreath product, and $f_i\in \brAut(\tree_d)$. Also note that the epimorphism $B_d\to S_d$ induces an epimorphism
\[
\pi\colon \brAut(\tree_d)\to \Aut(\tree_d)\text{.}
\]
For any $f\in\brAut(\tree_d)$ define
\[
\states(f)\defeq(f_1,\dots,f_d) \text{,}
\]
where $f=\phi(f)(f_1,\dots,f_d)$ is the braided wreath recursion. In particular for any subgroup $G\le \brAut(\tree_d)$ the image $\states(G)$ is a subset of $\brAut(\tree_d)^X$. This leads us to the following:

\begin{definition}[Braided self-similar]\label{def:braided_self_sim}
A subgroup $G\le \brAut(\tree_d)$ is called \emph{braided self-similar} if $\states(G)\subseteq G^X$.
\end{definition}

\begin{definition}[Braided self-identical]
A subgroup $G\le \brAut(\tree_d)$ is called \emph{braided self-identical} if $\states(g)=(g,\dots,g)$ for all $g\in G$.
\end{definition}

Note that the image in $\Aut(\tree_d)$ under $\pi$ of any braided self-similar group is a self-similar group (with an analogous statement for self-identical).

\begin{remark}
In terms of being able to construct these variants of self-similar groups, there is nothing particularly special about braid groups. More generally, given any group $\Gamma$ acting on any set $Y$ one can form the iterated wreath products $\Gamma \wr_Y^n \Gamma$, take the projective limit to get $\Gamma \wr_Y (\Gamma \wr_Y(\Gamma \wr_Y\cdots))$, and then consider subgroups $G$ for which the associated wreath recursion of any element of $G$ involves only elements of $G$. One could call these ``$(\Gamma,Y)$-self-similar''. Our focus here is on braided versions, so we will not pursue this degree of generality here.
\end{remark}

\begin{example}[Subgroups of the braid group]\label{ex:braid_self_sim}
This example is the ``braided'' version of Example~\ref{ex:symm_self_sim}. For $\beta\in B_d$ we can define an element of $\brAut(\tree_d)$, also denoted $\beta$, via the braided wreath recursion $\beta=\beta(\beta,\dots,\beta)$. In particular, viewing $B_d$ as a subgroup of $\brAut(\tree_d)$ in this way we see that $B_d$, and indeed any subgroup of $B_d$, is braided self-identical (hence braided self-similar).
\end{example}

Before the next example, let us fix a generator $\zeta$ of $B_2\cong\Z$, which will also be useful in all that follows. We will use the braid where the strand on the left crosses over the strand on the right as the strands go down, as in Figure~\ref{fig:zeta}.

\begin{figure}[htb]
\centering
\begin{tikzpicture}[line width=0.8pt]
  \draw (1,0) to [out=-90, in=90] (0,-2);
  \draw[white, line width=4pt] (0,0) to [out=-90, in=90] (1,-2);
  \draw (0,0) to [out=-90, in=90] (1,-2);
\end{tikzpicture}
\caption{The braid $\zeta$.}
\label{fig:zeta}
\end{figure}
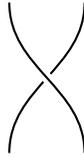

\begin{example}[$\Z\wr\Z$]\label{ex:ZwrZ}
As a nice example that is braided self-similar but not braided self-identical, we can use $\Z\wr\Z$. Take elements $a$ and $b$ whose braided wreath recursions are $a=\zeta(1,a)$ and $b=\zeta^2(1,b)$. A computation shows that $a^{2k}ba^{-2k}=\zeta^2(1,a^k ba^{-k})$ and $a^{2k-1}ba^{-(2k-1)}=\zeta^2(a^k ba^{-k},1)$ for all $k\in\Z$, so an induction argument shows that all the $b_n\defeq a^n ba^{-n}$ pairwise commute. Hence we get a well defined epimorphism $\Z\wr\Z\to \langle a,b\rangle$, and we claim it is an isomorphism. It suffices to show that if $(b_{i_1})^{p_1}\cdots (b_{i_\ell})^{p_\ell} a^q=1$ for $i_1<\cdots<i_\ell$ then $p_1=\cdots=p_\ell=q=0$. Hitting $\langle a,b\rangle$ with $\pi$, since $b\in\ker(\pi)$ and one can check that $a$ maps under $\pi$ to an infinite order element of $\Aut(\tree_2)$, we see that $q=0$. Now applying braided wreath recursions and using induction on the $p_i$, we see that the only possibility is $p_1=\cdots=p_\ell=0$, as desired.
\end{example}

Before discussing our main example of the braided Grigorchuk group, let us pin down the kernel of the map $\pi\colon \brAut(\tree_d)\to \Aut(\tree_d)$, i.e., $\pi\colon B_d\wr_X^\infty B_d\to S_d\wr_X^\infty S_d$. First note that the kernel of $\pi\colon B_d\to S_d$ is the \emph{pure braid group} $PB_d$. The action of $PB_d$ on $X$ is trivial, so $PB_d \wr_X^n PB_d$ is simply a direct product of copies of $PB_d$, namely $1+d+d^2+\cdots+d^n$ many copies. This equals the kernel of the natural map $B_d \wr_X^n B_d \to S_d \wr_X^n S_d$. Now we get the following:

\begin{lemma}\label{lem:kernel_is_pure}
The kernel of $\pi\colon B_d\wr_X^\infty B_d\to S_d\wr_X^\infty S_d$ is $PB_d\wr_X^\infty PB_d$, which is a direct product of infinitely many copies of $PB_d$.
\end{lemma}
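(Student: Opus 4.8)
The plan is to compute the kernel levelwise and then pass to the projective limit. First I would recall the two projective systems in play: $B_d \wr_X^\infty B_d = \varprojlim (B_d \wr_X^n B_d)$ with the ``forget the rightmost factor'' maps, and likewise $S_d \wr_X^\infty S_d = \varprojlim (S_d \wr_X^n S_d)$. The map $\pi$ is induced by the standard projections $B_d \to S_d$ applied at every level of the wreath product, so it is a morphism of projective systems, i.e.\ it commutes with the bonding maps. Since taking kernels commutes with projective limits (a limit of a sequence of short exact sequences of groups stays exact on the left), it suffices to identify $\ker(\pi_n)$ where $\pi_n \colon B_d \wr_X^n B_d \to S_d \wr_X^n S_d$ is the $n$-th level map, and check that the bonding maps restrict appropriately.

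Next I would run an induction on $n$ to show $\ker(\pi_n) = PB_d \wr_X^n PB_d$, where by the latter I mean the $n$-fold iterated wreath product of $PB_d$ with itself along the action on $X$ — but since $PB_d$ acts trivially on $X$ (its image in $S_d$ is trivial), this iterated wreath product is just an (internal) direct product of $1 + d + d^2 + \cdots + d^n$ copies of $PB_d$. The base case $n=0$ is the definition of $PB_d = \ker(B_d \to S_d)$. For the inductive step, write $B_d \wr_X^n B_d = B_d \ltimes (B_d \wr_X^{n-1} B_d)^X$ and similarly for $S_d$; then $\pi_n = \pi_0 \ltimes (\pi_{n-1})^X$, so an element $(\beta, (g_1,\dots,g_d))$ lies in $\ker(\pi_n)$ iff $\beta \in PB_d$ and each $g_i \in \ker(\pi_{n-1})$, giving $\ker(\pi_n) = PB_d \times (\ker \pi_{n-1})^d = PB_d \times (PB_d\wr_X^{n-1}PB_d)^d$, which is the asserted direct product by the inductive hypothesis. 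One small point to spell out: because $PB_d$ acts trivially on $X$, the semidirect product $PB_d \ltimes (\cdots)^X$ appearing here is actually a direct product, so no twisting survives and the whole thing is a genuine direct product of copies of $PB_d$.

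Finally I would assemble the limit: the bonding map $B_d\wr_X^n B_d \to B_d\wr_X^{n-1}B_d$ restricts to the corresponding ``forget the rightmost factor'' map $PB_d\wr_X^n PB_d \to PB_d \wr_X^{n-1} PB_d$ on kernels, so $\ker(\pi) = \varprojlim \ker(\pi_n) = \varprojlim (PB_d\wr_X^n PB_d) = PB_d \wr_X^\infty PB_d$. Since each level is a finite direct product of copies of $PB_d$ and the bonding maps are the obvious coordinate projections (dropping the ``newest'' $d^n$ coordinates), the projective limit is the full (unrestricted) direct product of countably infinitely many copies of $PB_d$, indexed by the vertices $X^*$ of $\tree_d$. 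This matches the heuristic that an element of $\ker(\pi)$ is precisely a pure braid assigned independently to each vertex of the tree.

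I do not anticipate a serious obstacle here; the only thing that requires a moment's care is justifying that $\ker$ commutes with the projective limit (equivalently, that a compatible family of kernel elements is the same as a kernel element of the limit), which is immediate from the explicit description of elements of a projective limit as compatible tuples, together with the fact that $\pi$ is computed coordinatewise. The bookkeeping of how many copies of $PB_d$ appear at level $n$ (the geometric series $1 + d + \cdots + d^n$) and that these are indexed by $X^{\le n}$ is routine but worth stating cleanly so the ``infinitely many copies'' claim is transparent.
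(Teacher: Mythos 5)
Your proposal is correct and follows essentially the same route as the paper: identify the levelwise kernel as $PB_d\wr_X^n PB_d$ (a direct product since $PB_d$ acts trivially on $X$) and pass to the projective limit, the only cosmetic difference being that the paper realizes the limit as a subgroup of the direct product of the levels while you invoke left-exactness of $\varprojlim$ directly. Your explicit induction for the levelwise kernel fills in a step the paper asserts without proof, but no new idea is involved.
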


\begin{proof}
Viewing the projective limit $B_d\wr_X^\infty B_d$ as a subgroup of the direct product of the factors $B_d \wr_X^n B_d$ in the projective system, and similarly $S_d\wr_X^\infty S_d$ as a subgroup of the direct product of the $S_d \wr_X^n S_d$, the map $\pi$ is the restriction of the analogous map between these direct products. The kernel of the map on the direct products is clearly the direct product of the $PB_d \wr_X^n PB_d$. This shows that the kernel of $\pi$ is the intersection of $B_d\wr_X^\infty B_d$ with the direct product of the $PB_d \wr_X^n PB_d$, which is $PB_d\wr_X^\infty PB_d$. Since $PB_d$ acts trivially on $X$, this is just a direct product of copies of $PB_d$.
\end{proof}

\begin{corollary}\label{cor:kernel_is_abelian}
The kernel of $\pi\colon \brAut(\tree_2)\to \Aut(\tree_2)$ is abelian.
\end{corollary}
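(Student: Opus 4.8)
The plan is to read this off immediately from Lemma~\ref{lem:kernel_is_pure} together with the fact that the $2$-strand pure braid group is abelian. First I would invoke Lemma~\ref{lem:kernel_is_pure} in the case $d=2$, which identifies the kernel of $\pi\colon\brAut(\tree_2)\to\Aut(\tree_2)$ with $PB_2\wr_X^\infty PB_2$, and moreover tells us that, since $PB_2$ acts trivially on $X$, this iterated wreath product is literally a direct product of infinitely many copies of $PB_2$, with no twisting.

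Next I would note that $PB_2$ is abelian. Indeed $B_2\cong\Z$ is infinite cyclic, generated by $\zeta$, and the standard projection $B_2\to S_2\cong\Z/2\Z$ sends $\zeta$ to the nontrivial transposition; hence $PB_2=\ker(B_2\to S_2)=\langle\zeta^2\rangle$ is itself infinite cyclic, in particular abelian.

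Finally, a direct product of abelian groups is abelian, so the kernel of $\pi\colon\brAut(\tree_2)\to\Aut(\tree_2)$ is abelian, as claimed. The argument is entirely formal and there is no real obstacle; the only point that deserves a moment's attention is confirming that Lemma~\ref{lem:kernel_is_pure} genuinely delivers an untwisted direct product rather than some nontrivial iterated wreath product, which it does precisely because the pure braid group acts trivially on the alphabet $X$.
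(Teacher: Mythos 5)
Your proof is correct and is essentially identical to the paper's: both invoke Lemma~\ref{lem:kernel_is_pure} to identify the kernel with a direct product of copies of $PB_2\cong\Z$, hence abelian. You simply spell out the (standard) fact that $PB_2=\langle\zeta^2\rangle$ is infinite cyclic, which the paper takes for granted.
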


\begin{proof}
By Lemma~\ref{lem:kernel_is_pure}, the kernel is a direct product of copies of $PB_2\cong \Z$.
\end{proof}

\subsection{The braided Grigorchuk group}\label{ssec:brGrig}

Now we construct a braided version of $\Grig$. Denote by $a$ the element of $\brAut(\tree_2)$ defined by the braided wreath recursion $a=\zeta(1,1)$. Now define $b$, $c$, and $d$ via the braided wreath recursions $b=(a,c)$, $c=(a^{-1},d)$, and $d=(1,b)$.

\begin{definition}[Braided Grigorchuk group]
The braided Grigorchuk group $\brGrig$ is the subgroup of $\brAut(\tree_2)$ defined by
\[
\brGrig\defeq \langle a,b,c,d\rangle\text{.}
\]
\end{definition}

The reason for using $c=(a^{-1},d)$ rather than $c=(a,d)$ is so that we get the pleasant-looking relation $bcd=1$ that holds analogously in the Grigorchuk group, as the proof of the next result shows.

\begin{lemma}\label{lem:Z2_in_brGrig}
The subgroup $\langle b,c,d\rangle$ of $\brGrig$ is isomorphic to $\Z^2$.
\end{lemma}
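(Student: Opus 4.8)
The plan is to show first that $b$, $c$, $d$ pairwise commute and satisfy $bcd = 1$, and then to rule out any further relations, so that $\langle b,c,d\rangle \cong \Z^2$ with, say, $b$ and $c$ as a basis. For the relation $bcd = 1$: using the braided wreath recursions $b = (a,c)$, $c = (a^{-1},d)$, $d = (1,b)$, and the fact that the product of tuples in $\brAut(\tree_2)$ is computed coordinatewise when all the top braid components are trivial, we compute $\states(bcd) = (a \cdot a^{-1} \cdot 1,\ c \cdot d \cdot b) = (1,\ cdb)$. Since $\phi(b) = \phi(c) = \phi(d) = 1$ as well, the element $bcd$ satisfies $bcd = (1, cdb)$. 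Now $cdb$ is conjugate to $bcd$, so an induction on the depth of the tree (or the observation that $\brAut(\tree_2)$ is a projective limit and $bcd$ must be trivial at every finite level) forces $bcd = 1$, exactly as in the classical Grigorchuk group.

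Next I would establish pairwise commutativity. Since $bcd = 1$, it suffices to show that any two of them commute, e.g. that $b$ and $c$ commute. Here I would again pass to wreath recursions: $[b,c]$ has trivial top braid component, and $\states([b,c]) = ([a,a^{-1}],\ [c,d])= (1,[c,d])$. So $[b,c] = (1,[c,d])$, and similarly $[c,d] = (1,[d,b])$ and $[d,b] = (1,[b,c])$ (using $d = (1,b)$, $b=(a,c)$, $c = (a^{-1},d)$ and that $a, a^{-1}$ commute). Chaining these three identities, $[b,c]$ is supported arbitrarily deep in the tree, hence trivial by the projective limit argument. Thus $\langle b,c,d\rangle$ is abelian, generated by two elements $b$ and $c$ (as $d = c^{-1}b^{-1} = (bc)^{-1}$), so it is a quotient of $\Z^2$.

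Finally I must show $\langle b,c\rangle$ is not a quotient of $\Z$ or finite, i.e. that $b$ and $c$ are independent of infinite order. The cleanest route is to exhibit, for each $n$, that $b^m c^k \ne 1$ whenever $(m,k) \ne (0,0)$, by tracking a suitable ``coordinate'' of the element arbitrarily deep in the tree. Concretely, iterating the wreath recursion shows that the right-hand entry of $b^m c^k$ at some fixed-depth vertex is again a word of the form $b^{m'} c^{k'} a^{\varepsilon}$ (or involves a nontrivial power of $a$, hence is nontrivial in $\brAut(\tree_2)$), and one keeps a bookkeeping of exponents; alternatively, one can apply $\pi$ to land in the classical Grigorchuk group where $\langle \overline b, \overline c, \overline d\rangle \cong \Z^2$ is known, which immediately shows $\pi(b), \pi(c)$ are independent of infinite order, and this property lifts. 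I expect this last step — verifying there are no hidden relations — to be the main obstacle, though invoking $\pi$ and the known structure of the Klein four-subgroup... wait, of the $\Z^2$ structure... in $\Grig$ makes it essentially immediate. The other steps are routine wreath-recursion computations combined with the standard depth-induction argument used throughout the study of self-similar groups.
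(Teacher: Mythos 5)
Your first two steps are correct. The computation $bcd=(1,cdb)$ and the iteration pushing everything deeper into the tree is exactly the paper's argument for $bcd=1$. For commutativity you take a slightly different (equally valid) route: you compute $[b,c]=(1,[c,d])$, $[c,d]=(1,[d,b])$, $[d,b]=(1,[b,c])$ and kill the commutator by the same depth argument, whereas the paper instead derives commutativity by also proving $cbd=1$ and comparing with $bcd=1$. Either works.

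The genuine gap is in your final step. Your preferred ``clean'' route --- applying $\pi$ to land in $\Grig$ and invoking that $\langle \overline{b},\overline{c},\overline{d}\rangle\cong\Z^2$ --- is false: in the Grigorchuk group $\overline{b},\overline{c},\overline{d}$ are involutions and $\langle \overline{b},\overline{c},\overline{d}\rangle$ is the Klein four-group (indeed all of $\Grig$ is a torsion group), so $\pi(b)$ and $\pi(c)$ have order $2$ and no infinite-order information can be pulled back through $\pi$. Your hesitation in the text (``Klein four-subgroup... wait, of the $\Z^2$ structure'') is exactly where the argument breaks. The fallback you sketch --- bookkeeping of $a$-exponents under the wreath recursion --- is the right idea and is what the paper does, but you do not carry it out, and you flag it as the main obstacle before retreating to the invalid $\pi$ argument. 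The actual computation is short: $b^k c^\ell=(a^{k-\ell},c^k d^\ell)$ and $c^k d^\ell=(a^{-k},d^k b^\ell)$, so if $b^k c^\ell=1$ then $a^{k-\ell}=1$ and $a^{-k}=1$; since $a=\zeta(1,1)$ maps to the generator $\zeta$ of $B_2\cong\Z$ under $\phi$, it has infinite order, forcing $k=\ell=0$.
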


\begin{proof}
First we claim that $bcd=1$. Applying braided wreath recursions to $bcd$ produces $(1,cdb)$, then $(1,(1,dbc))$, and so forth, which shows that indeed $bcd=1$. A similar argument shows $cbd=1$. In particular $b$ and $c$ commute and $d=(bc)^{-1}$. It remains to show that $b$ and $c$ admit no non-trivial relations of the form $b^k c^\ell=1$. Applying braided wreath recursions we get $b^k c^\ell = (a^{k+\ell},(a^k,d^k b^\ell))$, which equals $1$ only if $k+\ell=0$ and $k=0$, so indeed no non-trivial such relations hold.
\end{proof}

\begin{lemma}\label{lem:brGrig_to_Grig}
The restriction of $\pi\colon \brAut(\tree_2)\to \Aut(\tree_2)$ to $\brGrig$ yields an epimorphism from $\brGrig$ onto $\Grig$.
\end{lemma}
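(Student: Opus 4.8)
The plan is to show directly that $\pi$ carries the four generators $a,b,c,d$ of $\brGrig$ to the four generators $\overline a,\overline b,\overline c,\overline d$ of $\Grig$; once that is done we get $\pi(\brGrig)=\langle \pi(a),\pi(b),\pi(c),\pi(d)\rangle=\langle\overline a,\overline b,\overline c,\overline d\rangle=\Grig$, so the restriction of $\pi$ is a (surjective) homomorphism onto $\Grig$, as claimed.

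First I would record the one structural fact that drives the argument: $\pi\colon B_2\wr_X^\infty B_2\to S_2\wr_X^\infty S_2$ is, by construction, the limit of the maps $B_2\wr_X^n B_2\to S_2\wr_X^n S_2$ induced by the projection $B_2\to S_2$, and hence it is compatible with the wreath recursion. Concretely, if $f\in\brAut(\tree_2)$ has braided wreath recursion $f=\phi(f)(f_1,f_2)$, then $\pi(f)$ has wreath recursion $\pi(f)=\overline{\phi(f)}\,(\pi(f_1),\pi(f_2))$, where $\overline{\phi(f)}$ is the image of $\phi(f)$ under $B_2\to S_2$; equivalently $\states(\pi(f))=(\pi(f_1),\pi(f_2))$ and $\rho(\pi(f))$ is the image of $\phi(f)$ in $S_2$.

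Next I would apply this to $a,b,c,d$. Since $a=\zeta(1,1)$ and $\zeta$ maps to the transposition $(1~2)\in S_2$, we get $\pi(a)=(1~2)(\id,\id)=\overline a$; in particular $\pi(a)$ is an involution, so $\pi(a)^{-1}=\pi(a)=\overline a$. The recursions $b=(a,c)$, $c=(a^{-1},d)$, $d=(1,b)$ all have trivial braid prefix, so $\pi(b)=(\pi(a),\pi(c))$, $\pi(c)=(\pi(a)^{-1},\pi(d))=(\overline a,\pi(d))$, and $\pi(d)=(\id,\pi(b))$, all fixing the children of the root. Thus $(\pi(a),\pi(b),\pi(c),\pi(d))$ satisfies exactly the system of wreath recursions
\[
\pi(a)=(1~2)(\id,\id),\qquad \pi(b)=(\pi(a),\pi(c)),\qquad \pi(c)=(\pi(a),\pi(d)),\qquad \pi(d)=(\id,\pi(b))
\]
that defines $\overline a,\overline b,\overline c,\overline d$. (This is precisely why the definition of $\brGrig$ uses $c=(a^{-1},d)$: the sign is invisible to $\pi$ since $\overline a^2=1$, but it gives the cleaner relation $bcd=1$ upstairs, per Lemma~\ref{lem:Z2_in_brGrig}.)

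Finally I would invoke uniqueness: under the identification $\Aut(\tree_2)\cong S_2\wr_X^\infty S_2$, an automorphism of $\tree_2$ is determined by its wreath recursion applied recursively down the tree, so the displayed system has a unique solution in $\Aut(\tree_2)^4$, namely $(\overline a,\overline b,\overline c,\overline d)$. Hence $\pi(a)=\overline a$, $\pi(b)=\overline b$, $\pi(c)=\overline c$, $\pi(d)=\overline d$, and the proof is complete. There is no serious obstacle here; the only point meriting a word of care is this last uniqueness statement, which is a standard consequence of writing $\Aut(\tree_d)$ as the projective limit of the finite iterated wreath products.
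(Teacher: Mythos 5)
Your proposal is correct and follows essentially the same route as the paper's proof: check that $\pi$ sends $\zeta$ to $(1~2)$, hence $a$ to $\overline{a}$, and then conclude that $b,c,d$ go to $\overline{b},\overline{c},\overline{d}$ via the wreath recursions. You simply spell out the two points the paper leaves implicit, namely the compatibility of $\pi$ with the wreath recursion and the fact that $\pi(a^{-1})=\overline{a}$ absorbs the sign in $c=(a^{-1},d)$, together with the uniqueness of solutions to the recursive system.
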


\begin{proof}
First note that $\pi\colon B_2\to S_2$ sends $\zeta$ to $(1~2)$. Thus $\pi\colon \brAut(\tree_2)\to \Aut(\tree_2)$ sends $a$ to the element $\overline{a}$ of $\Grig$. It follows immediately that $b$, $c$, and $d$ respectively map under $\pi$ to $\overline{b}$, $\overline{c}$, and $\overline{d}$.
\end{proof}

\begin{corollary}\label{cor:brGrig_amenable}
The braided Grigorchuk group $\brGrig$ is amenable.
\end{corollary}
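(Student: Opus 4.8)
The plan is to realize $\brGrig$ as an extension of the (ordinary) Grigorchuk group by an abelian group, and then invoke the standard closure properties of the class of amenable groups. Concretely, by Lemma~\ref{lem:brGrig_to_Grig} the restriction of $\pi\colon\brAut(\tree_2)\to\Aut(\tree_2)$ to $\brGrig$ is an epimorphism onto $\Grig$, so we have a short exact sequence
\[
1 \longrightarrow K \longrightarrow \brGrig \longrightarrow \Grig \longrightarrow 1 \text{,}
\]
where $K \defeq \ker(\pi)\cap\brGrig$.

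First I would observe that $K$ is amenable: it is a subgroup of $\ker(\pi\colon\brAut(\tree_2)\to\Aut(\tree_2))$, which is abelian by Corollary~\ref{cor:kernel_is_abelian}, hence $K$ is abelian and in particular amenable. Next, the quotient $\Grig$ is amenable by Grigorchuk's theorem \cite{grigorchuk84}. Since the class of amenable groups is closed under extensions, it follows that $\brGrig$ is amenable.

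There is really no serious obstacle here; the only things being used beyond the results already established in the excerpt are the facts that subgroups of amenable groups are amenable and that amenability passes to extensions, together with the (cited) amenability of $\Grig$. If one wished to be slightly more self-contained one could note that $\ker(\pi)$ restricted to $\brGrig$ is a direct product of copies of $PB_2\cong\Z$ (via Lemma~\ref{lem:kernel_is_pure}), making its amenability completely transparent, but this is not needed.
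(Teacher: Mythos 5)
Your proposal is correct and follows the paper's argument exactly: restrict $\pi$ to get the epimorphism $\brGrig\to\Grig$ from Lemma~\ref{lem:brGrig_to_Grig}, note the kernel is abelian by Corollary~\ref{cor:kernel_is_abelian}, and conclude amenability from closure under extensions together with Grigorchuk's theorem. No gaps.
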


\begin{proof}
By Lemma~\ref{lem:brGrig_to_Grig} we have an epimorphism $\pi\colon \brGrig\to \Grig$. The kernel is abelian by Corollary~\ref{cor:kernel_is_abelian}. Hence $\brGrig$ is abelian-by-amenable, so amenable.
\end{proof}

Momentarily, we will prove that the braided Grigorchuk group is not finitely presented. The proof is inspired by the proof for the Grigorchuk group with some modifications, and we (roughly) follow this proof as given in \cite{dlHar00}. First we need some setup.

By Lemma~\ref{lem:Z2_in_brGrig}, we see there is a canonical epimorphism from $F\defeq \Z \ast \Z^2$ onto $\brGrig$ by mapping the generator of the first copy of $\Z$ to $a$ and mapping the generators of $\Z^2$ to $b$ and $c$. Thus any element in $\brGrig$ can be written (non-uniquely) as 
\[
z_1a^{k_1}z_2\cdots  a^{k_\ell}z_{\ell+1}
\]
where the $z_i$ are of the form $b^{m_i}c^{n_i}$ for some $m_i, n_i \in \Z$ and are nontrivial except possibly when $i=1$ or $\ell+1$. Call an expression of this form \emph{reduced}. For a reduced expression as above, declare the \emph{length}, denoted $|z_1a^{k_1}z_2\cdots  a^{k_\ell}z_{\ell+1}|$, to be the number of terms in the alternating product, so the length is  $2\ell-1$, $2\ell$, or $2\ell+1$ depending on whether $z_1$ and/or $z_{\ell+1}$ are equal to $1$. Similarly, define $|z_1a^{k_1}z_2\cdots  a^{k_\ell}z_{\ell+1}|_a=\sum_{i=1}^\ell k_i$, i.e., the sum of the exponents on the $a$ terms. 

Given any word $w$ in the generators $a$, $b$, $c$, and $d$ and their inverses, one can obtain a new word in reduced form by iteratively applying the following reductions and the analogous ones for their inverses:
\begin{enumerate}
    \item $g^ig^j=g^{i+j}$ for $g\in \{a,b, c, d\}$ 
    \item $d^{-1}=bc$
    \item $cb=bc$
\end{enumerate}
Call the resulting word $w^{red}$.

\begin{lemma}\label{lem:br_grig_contract}
Let $w$ be a reduced expression for an element of $\brGrig$. Consider the braided wreath recursion $w=\phi(w)(w_1, w_2)$. Then \[|w_i^{red}|\leq \Big\lfloor \frac{|w|+1}{2} \Big\rfloor \text{. }\]
\end{lemma}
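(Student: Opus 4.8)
The plan is to perform a careful case analysis on the structure of a reduced word $w = z_1 a^{k_1} z_2 \cdots a^{k_\ell} z_{\ell+1}$ and track how its two "halves" $w_1, w_2$ are built up, syllable by syllable, under the braided wreath recursion. The key arithmetic input is the behavior of the generators under $\states$: from the definitions, $\states(a) = (1,1)$ with $\phi(a) = \zeta$, while $b, c, d$ fix the children of the root, with $\states(b) = (a,c)$, $\states(c) = (a^{-1}, d)$, $\states(d) = (1, b)$. Thus a syllable $z_i = b^{m_i} c^{n_i}$ contributes $\states(z_i) = (a^{m_i - n_i}, c^{n_i} d^{m_i})$ (after using $bc = cb$, $d = (bc)^{-1}$, etc.), i.e., it pushes one $a$-syllable into the first coordinate and one $\{b,c,d\}$-syllable into the second. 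A syllable $a^{k_i}$ has trivial $\states$ but toggles the $\zeta$-parity, so it swaps which of $w_1, w_2$ subsequent syllables get appended to.

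First I would set up notation: walking through $w$ from left to right, keep a running "parity bit" that starts at $0$ and flips each time we pass an $a$-syllable whose exponent is odd (more precisely, flips by $k_i \bmod 2$ at the $i$-th $a$-syllable). Each $z_j$-syllable then contributes its pair $(a^{m_j-n_j}, c^{n_j}d^{m_j})$ to coordinates $(1,2)$ or $(2,1)$ according to the current parity. So $w_1$ and $w_2$ are each alternating products of $a$-syllables (coming from $z_j$'s with one parity and a power of $\zeta$'s worth of shift) and $\{b,c,d\}$-syllables (coming from $z_j$'s with the other parity). The crucial point is that $w$ has $\ell+1$ syllables of $z$-type, and each one contributes exactly one nontrivial-ish syllable to $w_1$ and one to $w_2$ — but after reduction, consecutive syllables of the same generator type landing in the same coordinate merge. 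Since the $z_j$ alternate in $w$ with $a$-syllables between them, when the intervening $a$-syllable has odd exponent the two neighboring $z_j$ contributions go to \emph{different} coordinates, and when it has even exponent they go to the \emph{same} coordinate and hence may merge upon reduction — but either way the total count is controlled.

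The counting step: I would show $|w_i^{red}| \le \lfloor (|w|+1)/2\rfloor$ by bounding the number of syllables in $w_i$ before reduction and noting that, in the worst case, the $\ell+1$ many $z$-contributions to coordinate $i$ are already at most roughly half of them (those of the relevant parity), interleaved with at most that many $a$-syllables, and then reductions only help. Concretely, if $|w| = 2\ell+1$ (both $z_1, z_{\ell+1}$ nontrivial), then $w_1$ and $w_2$ together receive $2(\ell+1)$ pieces from the $z$'s and the $a$-shifts, distributed between the two coordinates; one checks the longer of the two has at most $\ell+1 = \lfloor(2\ell+2)/2\rfloor$ syllables after reduction, matching the bound; the cases $|w| = 2\ell$ and $|w| = 2\ell - 1$ are handled the same way with the endpoint syllables absent. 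A clean way to organize this is an induction on $\ell$ (peel off $a^{k_\ell} z_{\ell+1}$, apply the inductive bound to the shorter prefix, and check the last one or two syllables add at most one syllable to one coordinate and zero to the other).

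The main obstacle I anticipate is not any single computation but making the bookkeeping airtight: precisely tracking (a) the $\zeta$-parity toggling and which coordinate gets which $z$-contribution, (b) the fact that the $a$-powers appearing inside $w_i$ (arising both from the $m_j - n_j$ exponents and from the $\phi(\cdot)$ shifts of $a$-syllables of $w$) are themselves syllables whose reductions must be accounted for, and (c) the edge effects at the start and end of $w$, where $z_1$ or $z_{\ell+1}$ may be trivial and where the leading $a$-syllable shifts the initial parity. I would handle (b) by observing that when a reduced $w$ is fed in, the $a$-exponents $k_i$ are nonzero and the $z_j$ are in normal form, so after recursion the only merges in $w_i$ happen between an $a$-syllable contributed by some $z_j$ and an adjacent $a$-syllable, or between two $\{b,c,d\}$-syllables — and each such merge strictly decreases the syllable count, so dropping them only strengthens the inequality. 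Once the syllable-by-syllable accounting is in place, the floor bound falls out of elementary estimates, and I would present it with a small illustrative diagram or an explicit worst-case word (e.g. $b a b a b \cdots$) to convince the reader the bound is sharp.
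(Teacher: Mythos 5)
Your proposal is correct and follows essentially the same route as the paper: compute the braided wreath recursion of each block $z_i a^{k_i}$ (the paper records $(b^kc^m)a^n = \zeta^n(a^{k-m}, b^{-m}c^{k-m})$, swapped when $n$ is odd), observe that each such block contributes at most one syllable to each coordinate while the $a$-syllables themselves only toggle which coordinate receives subsequent contributions, and then count, noting reductions only shorten. (One minor slip: $\states(b^{m}c^{n}) = (a^{m-n}, c^{m}d^{n}) = (a^{m-n}, b^{-n}c^{m-n})$, not $(a^{m-n}, c^{n}d^{m})$, but this transposition does not affect the syllable count.)
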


\begin{proof}
Using the braided wreath recursion and applying the reductions we get
\[(b^kc^m)a^n= \begin{cases} 
      \zeta^n(a^{k-m}, b^{-m}c^{k-m}) & \text{ if } n \text{ is even} \\
      \zeta^n(b^{-m}c^{k-m}, a^{k-m}) & \text{ if } n \text{ is odd.}
   \end{cases} 
\]
Thus each pair $z_i a^{k_i}$ in the alternating product contributes at most one term to $w_1$ and at most one term to $w_2$. The result now follows.
\end{proof}

\begin{lemma}\label{lem:wordproblem}
The braided Grigorchuk group has solvable word problem.
\end{lemma}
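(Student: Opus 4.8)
The plan is the standard self-similar word-problem algorithm, powered by the braided wreath recursion together with the length contraction of Lemma~\ref{lem:br_grig_contract}. Given a word $w$ in $a^{\pm1},b^{\pm1},c^{\pm1},d^{\pm1}$, first replace it by its reduced form $w^{red}$ using the reductions (1)--(3), which does not change the represented element. Since $\phi\colon\brAut(\tree_2)\to B_2\cong\Z$ sends $a$ to $\zeta$ and $b,c,d$ to $1$, we have $\phi(w)=\zeta^{|w^{red}|_a}$, which is read off immediately; if $|w^{red}|_a\ne0$ then $w\ne1$. Otherwise $\phi(w)=1$, so writing $w=\phi(w)(w_1,w_2)$ in $\brAut(\tree_2)\cong B_2\wr_X\brAut(\tree_2)$ we have $w=1$ if and only if $w_1=1$ and $w_2=1$. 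Using the formula for $(b^kc^m)a^n$ from the proof of Lemma~\ref{lem:br_grig_contract} and multiplying out in the wreath product, one writes down explicit words in $a^{\pm1},b^{\pm1},c^{\pm1},d^{\pm1}$ representing $w_1$ and $w_2$ (in particular these again represent elements of $\brGrig$), and then the algorithm recursively tests whether $w_1^{red}=1$ and $w_2^{red}=1$.

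Correctness is immediate from the wreath product decomposition, since $\beta(g_1,g_2)=1$ in $B_2\wr_X\brAut(\tree_2)$ exactly when $\beta=1$ and $g_1=g_2=1$, so it remains only to check that the recursion terminates. By Lemma~\ref{lem:br_grig_contract}, if $|w^{red}|\ge2$ then $|w_i^{red}|\le\lfloor(|w^{red}|+1)/2\rfloor\le|w^{red}|-1$, so the length strictly decreases and after finitely many steps every branch reaches a reduced expression of length at most $1$. A reduced expression of length $0$ is the empty word and represents $1$; a reduced expression of length $1$ is either $a^k$ with $k\ne0$, which is nontrivial because $\phi(a^k)=\zeta^k\ne1$, or $b^mc^n$ with $(m,n)\ne(0,0)$, which is nontrivial by Lemma~\ref{lem:Z2_in_brGrig}. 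Hence the word problem is decided directly on inputs of length at most $1$, and the algorithm halts on every input.

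The only delicate point is the behaviour at length $1$: Lemma~\ref{lem:br_grig_contract} gives $\lfloor(|w|+1)/2\rfloor=1$ there, so the length need not drop and a naive recursion could loop; the resolution is exactly the observation above that every length-$1$ reduced word is nontrivial, so these serve as base cases rather than recursion steps. Everything else is a routine unwinding of the braided wreath recursion and the reductions (1)--(3), so I expect no further obstacle.
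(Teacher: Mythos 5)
Your proof is correct and takes essentially the same route as the paper's: reduce the word, test the exponent sum $|w|_a$ (equivalently the image under $\phi$), and recurse through the braided wreath recursion, with termination guaranteed by the length contraction of Lemma~\ref{lem:br_grig_contract}. Your explicit handling of length-$1$ reduced words as base cases via Lemma~\ref{lem:Z2_in_brGrig} is a slightly more careful version of the paper's argument, which instead keeps applying $\states$ to such words and asserts termination directly from Lemma~\ref{lem:br_grig_contract}.
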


\begin{proof}
Let $w$ be an expression in reduced form. To decide if $w$ represents the identity, proceed as follows. 
\begin{enumerate}
    \item Determine if $|w|_a= 0$. 
    \begin{enumerate}
        \item If $|w|_a \ne 0$, then $w\ne 1$
        \item If $|w|_a=0$ and the length of $w$ is $0$, then $w=1$. 
        \item If $|w|_a=0$ and the length of $w$ is at least 1, then apply $\states$ to obtain $\states(w)=(w_1, w_2)$ and go to $(ii).$
    \end{enumerate}
\item Compute $w_1^{red}$ and $w_2^{red}$ and return to $(i)$ which should be checked for both $w_1^{red}$ and $w_2^{red}$.
\end{enumerate}
It follows from Lemma~\ref{lem:br_grig_contract} that the procedure terminates.
\end{proof}

Let $W^{red}$ be the set of reduced words, so we can identify $W^{red}$ with $F=\Z \ast \Z^2$. For $w\in W^{red}$ and for $j_1, \dots, j_n\in \{1,2\}$, let $w_{j_1\cdots j_n}^{red}$ denote the reduced word defined inductively by
\[
w_{j_1\cdots j_n}^{red}=((w_{j_1\cdots j_{n-1}})^{red})_{j_n}^{red} \text{.}
\]
Let $\Psi\colon F\rightarrow \brGrig$ be the canonical epimorphism.

\begin{definition}
For each $n\geq 0$, let $K_n$ denote the subset of $F$ given by
\[
K_n\defeq \{w\in F \mid w\in\ker(\Psi), w_{j_1\cdots j_n}^{red}=1 \text{ for all } j_1, \dots, j_n\in\{1,2\} \}.
\]
\end{definition}

\begin{lemma}\label{lem:proper}
We have $\{1\}=K_0\leq K_1 \leq K_2\leq \cdots \leq \bigcup\limits_{n=0}^\infty K_n=\ker{\Psi}$. Moreover, each $K_n$ is normal in $F$ and all the inclusions are strict.
\end{lemma}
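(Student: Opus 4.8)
The plan is to verify each assertion in turn, with the genuine content being the strictness of the inclusions $K_n \lneq K_{n+1}$. First the easy observations: $K_0 = \{w \in F \mid w \in \ker(\Psi),\ w^{red}_{} = 1\}$, but $w^{red} = 1$ already forces $w = 1$ in $F$ (reduced words are in bijection with $F$), so $K_0 = \{1\}$. The inclusion $K_n \le K_{n+1}$ holds because if $w^{red}_{j_1\cdots j_n} = 1$ for all strings of length $n$, then applying $\states$ once more to the trivial element gives $(1,1)$, hence $w^{red}_{j_1\cdots j_{n+1}} = 1$ for all strings of length $n+1$. For $\bigcup_n K_n = \ker(\Psi)$: the containment $\subseteq$ is clear since each $K_n \subseteq \ker(\Psi)$; conversely, if $w \in \ker(\Psi)$, i.e.\ $\Psi(w) = 1$ in $\brGrig$, then by Lemma~\ref{lem:br_grig_contract} the reduced lengths $|w^{red}_{j_1\cdots j_n}|$ are non-increasing in $n$ (strictly decreasing once the length exceeds $1$, via $|w| \mapsto \lfloor (|w|+1)/2 \rfloor$), so for $n$ large enough every $w^{red}_{j_1\cdots j_n}$ has length $\le 1$; a length-$\le 1$ reduced word representing the identity in $\brGrig$ must actually be the empty word (a single syllable $a^k$, $b^k$, or $b^mc^n$ is non-trivial in $\brGrig$ unless the exponents vanish, by the injectivity statements behind Lemmas~\ref{lem:Z2_in_brGrig} and~\ref{lem:brGrig_to_Grig}), so $w \in K_n$. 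Normality of $K_n$ in $F$ follows from the fact that $\ker(\Psi)$ is normal and that the "sections" $\states$ are equivariant with respect to conjugation by $F$ in the appropriate sense: conjugating $w$ by a generator of $F$ changes each $w^{red}_{j_1\cdots j_n}$ only by conjugation by a fixed element depending on the generator and the string, so triviality of all the $w^{red}_{j_1\cdots j_n}$ is preserved. (More carefully, one checks normality under conjugation by $a$ and by $b$, $c$ separately, using the explicit formulas in the proof of Lemma~\ref{lem:br_grig_contract}.)

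The heart of the lemma is showing $K_n \ne K_{n+1}$ for every $n \ge 0$. Here I would mimic the classical argument for the Grigorchuk group (as in \cite{dlHar00}): exhibit explicit elements detecting the difference. A natural candidate is to use the element $(\overline{d}\,\overline{a})$-analogue, i.e.\ work with words of the form $(da)^{2^{n+1}}$ or suitable conjugates, which are known in the Grigorchuk case to lie in the $(n+1)$-st such kernel but not the $n$-th. Concretely, set, say, $u = (ad)^4$ or a similar short word and consider its iterated sections; using the formula $(b^kc^m)a^n = \zeta^n(a^{k-m}, b^{-m}c^{k-m})$ or $\zeta^n(b^{-m}c^{k-m}, a^{k-m})$ from the proof of Lemma~\ref{lem:br_grig_contract}, together with $d = (1,b)$, one computes the sections explicitly. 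One then finds a word $w^{(n)} \in F$ with $\Psi(w^{(n)}) = 1$ such that all length-$(n+1)$ sections $w^{(n),red}_{j_1\cdots j_{n+1}}$ are trivial but some length-$n$ section $w^{(n),red}_{j_1\cdots j_n}$ is a non-trivial element of $F$ (even though its image in $\brGrig$ is trivial). Since being in $\ker(\Psi)$ only requires the image in $\brGrig$ to vanish, not the word in $F$, such a $w^{(n)}$ witnesses $w^{(n)} \in K_{n+1} \setminus K_n$.

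I expect the main obstacle to be pinning down the right explicit family $w^{(n)}$ and carrying out the section computations carefully enough to confirm both that $w^{(n)} \in \ker(\Psi)$ and that exactly one more level of sectioning is needed to reach the trivial reduced word. The braided setting introduces the powers of $\zeta$ (i.e.\ genuine braid generators rather than transpositions) into the first coordinate of the wreath recursion, so the bookkeeping is slightly heavier than in the classical Grigorchuk case; in particular one must track the $|\cdot|_a$ statistic and the precise exponents on $b$ and $c$ through several iterations. The relations $bcd = 1$ and $cbd = 1$ from Lemma~\ref{lem:Z2_in_brGrig}, and the length-contraction bound from Lemma~\ref{lem:br_grig_contract}, are the tools that keep this finite and controlled. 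Once the family is in hand, strictness of all inclusions is immediate, completing the proof; this strict infinite filtration is exactly what will be fed into the proof that $\brGrig$ is not finitely presented (Proposition~\ref{prop:brGrig_not_fp}).
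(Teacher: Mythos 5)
Your treatment of the routine parts ($K_0=\{1\}$, the inclusions $K_n\le K_{n+1}$, the identity $\bigcup_n K_n=\ker\Psi$ via termination of the word-problem algorithm, and normality via explicit section formulas for conjugation by each generator) matches the paper's argument. The gap is in the strictness of the inclusions, which is the actual content of the lemma, and there your proposal does not just leave details unfinished --- the candidate witnesses you float would fail. Words like $(ad)^4$ or $(da)^{2^{n+1}}$ are the classical torsion relations of $\Grig$, but $\brGrig$ is built inside iterated wreath products of $B_2\cong\Z$ and has no such torsion: one computes $(ad)^2=\zeta^2(b,b)$ and hence $(ad)^4=\zeta^4(b^2,b^2)\ne 1$, so these elements are not even in $\ker\Psi$. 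This is precisely the obstruction you flag (``the powers of $\zeta$''), but flagging it does not resolve it, and no element of your proposed form can serve as a witness.

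The paper's construction is genuinely different and worth recording. It fixes the two commutator words $w=[a,d][a^{-1},d^{-1}]=a^{-1}d^{-1}adada^{-1}d^{-1}$ and $\widetilde{w}=[d^{-1},a^{-1}][d,a]$ (whose $a$-exponent sums vanish and whose first-level sections reduce to the trivial word, so they lie in $K_1\setminus K_0$), together with the endomorphism $\sigma$ of $F$ given by $\sigma(a)=a^{-1}c^{-1}a$, $\sigma(b)=d$, $\sigma(c)=b$, $\sigma(d)=c$. The point is that $\sigma$ preserves reduced words and the condition $|{\cdot}|_a=0$, and satisfies $\sigma(w)_1=\widetilde{w}$, $\sigma(w)_2=w$, $\sigma(\widetilde{w})_1=w$, $\sigma(\widetilde{w})_2=\widetilde{w}$; so taking sections of $\sigma^n(w)$ drops you exactly one level in the filtration, and induction gives $\sigma^n(w)\in K_{n+1}\setminus K_n$. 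Note that two seed words are needed because $\sigma$ swaps them between the two coordinates. Without this (or some equivalent) explicit family and the self-similarity mechanism provided by $\sigma$, the strictness claim --- and hence Proposition~\ref{prop:brGrig_not_fp} --- is not established.
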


\begin{proof}
It is clear that $K_n\leq K_{n+1}$. The fact that $\bigcup\limits_{n=0}^\infty K_n=\ker{\Psi}$ follows from the fact that the procedure in Lemma~\ref{lem:wordproblem} terminates. It remains to show that each $K_n$ is normal and that the inclusions are strict.

It is clear that $K_0$ is normal in $F$ and so we proceed by induction. Observe that 
\[K_n=\{w\in F \mid |w|_a=0 \text{ and } w_1^{red}, w_2^{red}\in K_{n-1}\} \text{. }\]
Let $w\in K_n$. We claim that any conjugate of $w$ by one of the generators is again in $K_n.$
For $a^{-1}wa$, this follows from the fact that $|a^{-1}wa|_a=|w|_a$ and that $(a^{-1}wa)_1=w_2$ and $(a^{-1}wa)_2=w_1$. For the remaining cases, we see that $|b^{-1}wb|_a=|c^{-1}wc|_a=|d^{-1}wd|_a=|w|_a$, and moreover,
\[
(b^{-1}wb)_1=a^{-1}w_1 a \qquad (b^{-1}wb)_2=c^{-1}w_2 c
\]
\[
(c^{-1}wc)_1=aw_1 a^{-1} \qquad (c^{-1}wc)_2 =d^{-1}w_2 d
\]
\[
(d^{-1}wd)_1=w_1 \qquad (d^{-1}wd)_2=b^{-1}w_2 b \text{.}
\]
Now normality follows from the induction hypothesis.

Finally, we verify that the inclusions are proper. Let $\sigma$ be the endomorphism of $F$ defined by $\sigma(a)=a^{-1}c^{-1}a, \sigma(b)=d, \sigma(c)=b,$ and $\sigma(d)=c$. Observe that $\sigma$ takes reduced words to reduced words and moreover if $|w|_a=0$ then $|\sigma(w)|_a=0$. Also note that $\sigma(a)_1=d^{-1}$, $\sigma(d)_1=a^{-1}$, $\sigma(a)_2=a$ and $\sigma(d)_2=d$. Now fix
\[
w=[a,d][a^{-1},d^{-1}]=a^{-1}d^{-1}adada^{-1}d^{-1}
\]
and
\[
\widetilde{w}=[d^{-1},a^{-1}][d,a]=dad^{-1}a^{-1}d^{-1}a^{-1}da \text{.}
\]
We claim that $\sigma^n(w)$ and $\sigma^n(\widetilde{w})$ are in $K_{n+1}$ but not $K_n$. For the base cases, one easily checks that $w$ and $\widetilde{w}$ are both in $K_1$ but not in $K_0$. Now note that $\sigma(w)_1=\widetilde{w}$, $\sigma(w)_2=w$, $\sigma(\widetilde{w})_1=w$, and $\sigma(\widetilde{w})_2=\widetilde{w}$, so using the base case we get that $\sigma(w)$ and $\sigma(\widetilde{w})$ are both in $K_2$ but not $K_1$. Continuing in this way, the result follows by induction.
\end{proof}

\begin{proposition}\label{prop:brGrig_not_fp}
The braided Grigorchuk group $\brGrig$ is not finitely presented.
\end{proposition}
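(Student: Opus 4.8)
The plan is to exploit the filtration of $\ker(\Psi)$ by the normal subgroups $K_n \trianglelefteq F$ established in Lemma~\ref{lem:proper}, together with a standard criterion: if $G = F/N$ with $F$ finitely generated and $N$ the ascending union of a strictly increasing chain of normal subgroups $K_n \trianglelefteq F$, then $G$ is not finitely presented. Indeed, if $\brGrig$ were finitely presented, then since $F = \Z \ast \Z^2$ is finitely presented, the normal subgroup $N = \ker(\Psi)$ would be finitely generated as a normal subgroup, i.e., normally generated by finitely many elements $r_1,\dots,r_k$. Each $r_i$ lies in some $K_{n_i}$, and taking $n = \max_i n_i$ we would get $N = \langle\!\langle r_1,\dots,r_k\rangle\!\rangle_F \le K_n$, contradicting the fact (Lemma~\ref{lem:proper}) that $K_n \subsetneq K_{n+1} \le N$. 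So the whole argument reduces to invoking this criterion, and the substantive work has already been done in Lemma~\ref{lem:proper}.

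More carefully, I would state and cite (or quickly prove) the criterion in the following form. Suppose $G$ is finitely presented and $G \cong F/N$ with $F$ finitely generated. Then $N$ is the normal closure in $F$ of a finite subset. This is classical: choosing a finite presentation $\langle x_1,\dots,x_m \mid s_1,\dots,s_\ell\rangle$ of $G$ and a generating set $y_1,\dots,y_p$ of $F$ mapping to generators of $G$, one compares the two presentations of $G$ (one via $F$, one via the free group on the $x_j$) using the standard fact that two finite generating sets of a group give presentations that differ by Tietze transformations; tracking relators through finitely many Tietze moves shows $N$ is normally generated by the images of $s_1,\dots,s_\ell$ together with finitely many ``change of generators'' relators. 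A clean reference is de la Harpe's book \cite{dlHar00} (the same source the authors are following for the Grigorchuk argument), or P.~Hall's lemma on finitely presented quotients. I would simply cite this.

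Given the criterion, the proof is three sentences: apply it with $F = \Z \ast \Z^2$ (finitely generated) and $N = \ker(\Psi)$, so that $\ker(\Psi) = \langle\!\langle r_1,\dots,r_k\rangle\!\rangle_F$ for some finite set; pick $n$ with $r_1,\dots,r_k \in K_n$ (possible since $\ker(\Psi) = \bigcup_m K_m$ and the $K_m$ are nested), and use that each $K_n$ is normal in $F$ to conclude $\ker(\Psi) \le K_n$; this contradicts $K_n \subsetneq K_{n+1} \le \ker(\Psi)$ from Lemma~\ref{lem:proper}. Hence $\brGrig$ is not finitely presented. The only mild obstacle is making sure the finitely-presented-quotient criterion is invoked in exactly the right generality — it needs $F$ merely finitely generated, not finitely presented, which is the standard statement — but this is entirely routine and needs no new ideas beyond what Lemma~\ref{lem:proper} already supplies.
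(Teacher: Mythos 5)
Your proposal is correct and is essentially the paper's own argument: the authors likewise suppose $\brGrig$ is finitely presented, deduce that $\ker(\Psi)$ is the normal closure in $F=\Z\ast\Z^2$ of finitely many relators $r_1,\dots,r_k$, place these in some $K_n$, and derive a contradiction with the strictness of the chain in Lemma~\ref{lem:proper}. The only difference is presentational — you spell out the standard finitely-presented-quotient criterion that the paper invokes implicitly.
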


\begin{proof}
Suppose $\brGrig$ is finitely presented. As it is a quotient of $F$, it has a presentation of the form
\[
\langle a,b,c,d \mid bcd,b^{-1}c^{-1}bc, r_1, r_2, \dots r_k\rangle\text{,}
\]
giving $\brGrig\cong F/R$ where $R=\ker(\Psi)$ is the normal closure of $r_1, \dots, r_k$ in $F$. As each $r_i$ is contained in some $K_n$, this contradicts Lemma~\ref{lem:proper}.
\end{proof}

Anthony Genevois has pointed out to us that Proposition~\ref{prop:brGrig_not_fp} also follows from \cite[Theorem~1.6]{benli13}, which implies that no finitely presented amenable group has $\Grig$ as a quotient.

\begin{remark}
Our braided Grigorchuk group $\brGrig$ is similar to the torsion-free group of intermediate growth constructed by Grigorchuk in \cite[Section~5]{grigorchuk85}; see \cite{allcock21} for a similar construction (which in discussions with Daniel Allcock we have determined is isomorphic to the group in \cite{grigorchuk85}). The difference is that, phrased in our language, it seems likely that that group would use the braided wreath recursions $b=(a,c)$, $c=(a,d)$, and $d=(1,b)$. We chose $c=(a^{-1},d)$ so that we would get $bcd=1$, which makes $\brGrig$ feel more closely related to $\Grig$. We will leave it as a question for future investigation whether $\brGrig$ is isomorphic to the group from \cite{grigorchuk85,allcock21}, and so in particular whether $\brGrig$ has intermediate growth.
\end{remark}

\section{Braiding groups of almost-automorphisms of trees}\label{sec:aauts}

In this section we discuss groups of almost-automorphisms of trees, and introduce braided versions. We will use the framework of $d$-ary cloning systems from \cite{skipper21}, which generalize cloning systems from \cite{witzel18}. Very loosely, $d$-ary cloning systems provide a way to construct new Thompson-like groups, and have proven useful in a variety of recent work, for example on decision problems \cite{berns-zieve18}, orderability \cite{ishida18}, von Neumann algebras \cite{bashwinger}, and the so called Jones technology \cite{brothier21}. Let us recall the relevant background.

\subsection{Cloning systems}\label{ssec:cloning}

\begin{definition}[$d$-ary cloning system]
Let $d\ge 2$ be an integer and $(G_n)_{n\in\N}$ a family of groups. For each $n\in\N$ let $\rho_n\colon G_n\to S_n$ be a homomorphism to the symmetric group $S_n$, called a \emph{representation map}. For each $1\le k\le n$ let $\clone_k^n \colon G_n \to G_{n+d-1}$ be an injective function (not necessarily a homomorphism), called a \emph{$d$-ary cloning map}. We write $\rho_n$ to the left of its input and $\clone_k^n$ to the right of its input, for reasons of visual clarity. Now we call the triple
\[
((G_n)_{n\in\N},(\rho_n)_{n\in\N},(\clone_k^n)_{k\le n})
\]
a \emph{$d$-ary cloning system} if the following axioms hold:\\
\indent\textbf{(C1):} (Cloning a product) $(gh)\clone_k^n = (g)\clone_{\rho_n(h)k}^{n} (h)\clone_k^n$\\
\indent\textbf{(C2):} (Product of clonings) $\clone_\ell^n \circ \clone_k^{n+d-1} = \clone_k^n \circ \clone_{\ell+d-1}^{n+d-1}$\\
\indent\textbf{(C3):} (Compatibility) $\rho_{n+d-1}((g)\clone_k^n)(i) = (\rho_n(g))\symmclone_k^n(i)$ for all $i\ne k,k+1,\dots,k+d-1$.

Here we always have $1\le k<\ell\le n$ and $g,h\in G_n$, and $\symmclone_k^n$ denotes the standard $d$-ary cloning maps for the symmetric groups.
\end{definition}

The maps $\symmclone_k^n$ are explained in \cite[Example~2.2]{skipper21}, and we will review them in Example~\ref{ex:symm_clone} below.

\begin{remark}
For an illustration of a cloning map in the special case $G_n=\brW_n$ (defined in Subsection~\ref{ssec:braid_aaut}), we direct the reader to Figure~\ref{fig:braaut_clone}.
\end{remark}

Given a $d$-ary cloning system on a family of groups $(G_n)_{n\in\N}$ one gets a Thompson-like group, denoted $\Thomp_d(G_*)$, which can be viewed as a sort of ``Thompson limit'' of the $G_n$. Let us recall the construction of $\Thomp_d(G_*)$. First, a \emph{$d$-ary tree} is a finite rooted tree in which each non-leaf vertex has $d$ children, and a \emph{$d$-ary caret} is a $d$-ary tree with $d$ leaves. An element of $\Thomp_d(G_*)$ is represented by a triple $(T_-,g,T_+)$ where $T_\pm$ are $d$-ary trees with the same number of leaves, say $n$, and $g$ is an element of $G_n$. There is an equivalence relation on such triples, and the equivalence classes are the elements of $\Thomp_d(G_*)$. The equivalence relation is given by expansion and reduction: an \emph{expansion} of $(T_-,g,T_+)$ is a triple of the form $(T_-',(g)\clone_k^n,T_+')$ where $T_+'$ is $T_+$ with a $d$-ary caret added to the $k$th leaf and $T_-'$ is $T_-$ with a $d$-ary caret added to the $\rho_n(g)(k)$th leaf. A \emph{reduction} is the reverse of an expansion. Now declare that two triples are equivalent if we can get from one to the other via a finite sequence of expansions and reductions, and write $[T_-,g,T_+]$ for the equivalence class of $(T_-,g,T_+)$.

\begin{definition}[Thompson-like group]
The \emph{Thompson-like group} $\Thomp_d(G_*)$ is the set of equivalence classes $[T_-,g,T_+]$.
\end{definition}

We have not explained the group operation on $\Thomp_d(G_*)$. The idea is that given any two elements $[T_-,g,T_+]$ and $[U_-,h,U_+]$, up to expansions we can assume $T_+=U_-$. This is because any pair of $d$-ary trees have a common $d$-ary tree obtainable from either of them by adding $d$-ary carets to their leaves. Now the group operation on $\Thomp_d(G_*)$ is defined by
\[
[T_-,g,T_+][U_-,h,U_+] \defeq [T_-,gh,U_+]
\]
when $T_+=U_-$. The cloning axioms ensure that this is a well defined group operation. The identity is $[T,1,T]$ (for any $T$) and inverses are given by $[T_-,g,T_+]^{-1} = [T_+,g^{-1},T_-]$.

\begin{example}[Cloning permutations]\label{ex:symm_clone}
The most fundamental example of a $d$-ary cloning system is on the family $(S_n)_{n\in\N}$ of symmetric groups. Take $\rho_n \colon S_n\to S_n$ to be the identity, and let
\[
\symmclone_k^n \colon S_n \to S_{n+d-1}
\]
be the function described as follows. Visualize $\sigma\in S_n$ by drawing arrows going up, from a line of labels $1$ through $n$ for the domain to another line of labels $1$ through $n$ for the range, with an arrow from $i$ to $\sigma(i)$ for each $i$. Now $(\sigma)\symmclone_k^n \in S_{n+d-1}$ is obtained by replacing the arrow from $k$ to $\sigma(k)$ by $d$ parallel arrows, and relabeling everything appropriately. For a (complicated) rigorous formula, see \cite[Example~2.2]{skipper21}. It turns out that this defines a $d$-ary cloning system, with $d$-ary cloning maps $\symmclone_k^n$. The Thompson-like group $\Thomp_d(S_*)$ that arises from this $d$-ary cloning system is (isomorphic to) the Higman--Thompson group $V_d$.
\end{example}

\begin{example}[Cloning braids]\label{ex:braid_clone}
The braided version of the above example is a $d$-ary cloning system on the family $(B_n)_{n\in\N}$ of braid groups. Take $\rho_n\colon B_n\to S_n$ to be the natural projection of $B_n$ onto $S_n$, and let
\[
\braidclone_k^n \colon B_n \to B_{n+d-1}
\]
be the function described as follows. Visualize $\beta\in B_n$ as an $n$-strand braid diagram, counting the strands $1$ through $n$ at the bottom. Now $(\beta)\braidclone_k^n \in B_{n+d-1}$ is obtained by replacing the $k$th strand (counting at the bottom) by $d$ parallel strands. As discussed in \cite[Remark~2.10]{witzel18}, in the $d=2$ case this really defines a cloning system (this was essentially already shown by Brin in \cite{brin07}, before cloning systems had been introduced, using the language of Zappa-Sz\'ep products), and it is easy to see that in the arbitrary $d$ case it defines a $d$-ary cloning system. The Thompson-like group $\Thomp_d(B_*)$ that arises from this $d$-ary cloning system is (isomorphic to) the braided Higman--Thompson group $\brV_d$, considered previously by Aroca and Cumplido in \cite{aroca22}, and the first author and Wu in \cite{skipperwu}.
\end{example}

\subsection{Almost-automorphisms}\label{ssec:aauts_def}

Every automorphism of $\tree_d$ induces a self-homeomorphism of the boundary $\partial\tree_d$, which is a $d$-ary Cantor space. The idea behind almost-automorphisms of $\tree_d$ is to consider self-homeomorphisms of $\partial\tree_d$ that locally ``look like'' they came from $\Aut(\tree_d)$. Far more detail and rigor can be found for example in \cite{leboudec17} and \cite[Subsection~1.3]{skipper21}. For our purposes here, we will use the isomorphism in \cite[Theorem~2.6]{skipper21} to simply define the group of almost-automorphisms of $\tree_d$ as the Thompson-like group arising from the following cloning system.

For $n\in\N$, let $W_n\defeq S_n \wr \Aut(\tree_d)$ (in \cite{skipper21} this was denoted $A_n$, but here we will use $W_n$). Here the wreath product has no subscript, and so this should be interpreted as meaning there are $n$ copies of $\Aut(\tree_d)$ and $S_n$ acts on $\{1,\dots,n\}$ in the standard way. Following \cite[Subsection~2.2]{skipper21}, we will define a $d$-ary cloning system on $(W_n)_{n\in\N}$. Let $\rho_n \colon W_n \to S_n$ be the natural epimorphism onto the $S_n$ term, i.e.,
\[
\rho_n(\sigma(f_1,\dots,f_n))\defeq \sigma \text{.}
\]
To define the $d$-ary cloning maps $\clone_k^n \colon W_n \to W_{n+d-1}$ we need some notation. Given $\sigma(f_1,\dots,f_n)\in W_n$ and $1\le k\le n$, write the wreath recursion of $f_k$ as $f_k=\rho(f_k)(f_k^1,\dots,f_k^d)$. Also write $\rho^{(k)}(f_k)\in S_{n+d-1}$ for the image of $\rho(f_k)\in S_d$ under the ($k$-dependent) monomorphism $S_d\to S_{n+d-1}$ induced by the inclusion $\{1,\dots,d\}\to\{1,\dots,n\}$ sending $j$ to $k+j-1$. Now we can define $\clone_k^n$ as follows:
\[
(\sigma(f_1,\dots,f_n))\clone_k^n \defeq (\sigma)\symmclone_k^n \rho^{(k)}(f_k) (f_1,\dots,f_{k-1},f_k^1,\dots,f_k^d,f_{k+1},\dots,f_n) \text{.}
\]

As proved in \cite[Proposition~2.4]{skipper21}, these $\rho_n$ and $\clone_k^n$ define a $d$-ary cloning system on the $W_n$.

\begin{definition}[Group of almost-automorphisms]
The \emph{group of almost-automorphisms} $\AAut(\tree_d)$ of $\tree_d$ is
\[
\AAut(\tree_d) \defeq \Thomp_d(W_*) \text{,}
\]
where $(W_n)_{n\in\N}$ is equipped with the above $d$-ary cloning system.
\end{definition}

Now for self-similar $G\le \Aut(\tree_d)$, consider the family $(S_n \wr G)_{n\in\N}$. Thanks to self-similarity, the restriction of $\clone_k^n$ to $S_n \wr G$ lands in $S_{n+d-1} \wr G$, which means the $d$-ary cloning system on the $W_n=S_n \wr \Aut(\tree_d)$ restricts to a $d$-ary cloning system on $S_n \wr G$.

\begin{definition}[R\"over--Nekrashevych group]\label{def:nekr}
For a self-similar group $G\le \Aut(\tree_d)$, the \emph{R\"over--Nekrashevych group} for $G$ is
\[
V_d(G) \defeq \Thomp_d(S_*\wr G) \text{,}
\]
where $(S_n\wr G)_{n\in\N}$ is equipped with the above $d$-ary cloning system.
\end{definition}

This definition agrees with the usual definition first given by Nekrashevych \cite{nekrashevych04}, thanks to \cite[Corollary~2.7]{skipper21}.

\begin{definition}[R\"over group]
The \emph{R\"over group} is the group $V_2(\Grig)$.
\end{definition}

The R\"over group $V_2(\Grig)$ was first constructed by R\"over in \cite{roever99}, and generalized by Nekrashevych in \cite{nekrashevych04} to the full family of R\"over--Nekrashevych groups $V_d(G)$. R\"over proved that $V_2(\Grig)$ is isomorphic to the abstract commensurator of $\Grig$, and is a finitely presented simple group \cite{roever99,roever02}. Belk and Matucci \cite{belk16} proved that it is even of type $\F_\infty$. Analogous results for certain $V_d(G)$ were proved by Nekrashevych \cite{nekrashevych04}, Farley and Hughes \cite{farley15}, and the authors \cite{skipper21}.

\subsection{Braiding groups of almost-automorphisms}\label{ssec:braid_aaut}

Now we will introduce braided R\"over--Nekrashevych groups. Starting with self-identical groups, this was previously done by Aroca and Cumplido in \cite{aroca22}, but for self-similar groups that are not self-identical, to the best of our knowledge this has not been done. In particular using the braided Grigorchuk group to construct a braided R\"over group is new. As a remark, Aroca and Cumplido's constructions could have been phrased in the language of cloning systems, as they mention in their introduction (though they did not use this framework outside their introduction), so one can view our approach here as a direct generalization of theirs.

\textbf{Convention:} The notation $B_n \wr G$, i.e., with no subscript on the $\wr$, will always mean $B_n \wr_{\{1,\dots,n\}} G$ defined via the action of the braid group $B_n$ on $\{1,\dots,n\}$ coming from the natural projection $B_n \to S_n$.

Recall that $\brAut(\tree_d)=B_d \wr_X^\infty B_d$ and  let $\brW_n\defeq B_n \wr \brAut(\tree_d)$. We want to define a $d$-ary cloning system on $(\brW_n)_{n\in\N}$. We will use the notation $\rho_n$ and $\clone_k^n$ like we did in the non-braided case for the cloning system on $(W_n)_{n\in\N}$, and no confusion should arise. Let $\rho_n\colon \brW_n \to S_n$ be the composition of $\brW_n \to W_n$ with $W_n \to S_n$, where the first map is induced by the standard epimorphism $B_n\to S_n$ together with $\pi\colon \brAut(\tree_d)\to \Aut(\tree_d)$, and the second map is the $\rho_n$ epimorphism from the non-braided case. To define the $d$-ary cloning maps $\clone_k^n \colon \brW_n \to \brW_{n+d-1}$ we need some notation. Given $\beta(f_1,\dots,f_n)\in \brW_n$ and $1\le k\le n$, write the braided wreath recursion of $f_k$ as $f_k=\phi(f_k)(f_k^1,\dots,f_k^d)$. Also write $\phi^{(k)}(f_k)\in B_{n+d-1}$ for the image of $\phi(f_k)\in B_d$ under the ($k$-dependent) monomorphism $B_d\to B_{n+d-1}$ induced by adding $k-1$ new unbraided strands on the left and $n-k$ new unbraided strands on the right. Now we can define $\clone_k^n$ as follows:
\[
(\beta(f_1,\dots,f_n))\clone_k^n \defeq (\beta)\braidclone_k^n \phi^{(k)}(f_k) (f_1,\dots,f_{k-1},f_k^1,\dots,f_k^d,f_{k+1},\dots,f_n) \text{.}
\]
Note that $\clone_k^n$ is injective, since $\braidclone_k^n$ is injective and $f_k$ is uniquely determined by its braided wreath recursion. See Figure~\ref{fig:braaut_clone} for an example.

\begin{figure}[htb]
 \centering
 \begin{tikzpicture}[line width=0.8pt]
  \draw (0,-2) to [out=90, in=-90] (1,0);
  \draw[white, line width=6pt] (1,-2) to [out=90, in=-90] (0,0);
  \draw[line width=1.5pt] (1,-2) to [out=90, in=-90] (0,0);
  \node at (0,-2.25) {$\id$}; \node at (1,-2.25) {$f$};
  \node at (2,-1) {$\stackrel{\clone_2^2}{\longrightarrow}$};
  
  \begin{scope}[xshift=3cm,yshift=1cm]
   \draw (0,-4) -- (0,-2);
   \draw (0,-2) to [out=90, in=-90] (2,0);
   \draw (1,-2)[white, line width=6pt] to [out=90, in=-90] (0,0);
   \draw (1,-2)[line width=1.5pt] to [out=90, in=-90] (0,0);
   \draw (2,-2)[white, line width=6pt] to [out=90, in=-90] (1,0);
   \draw (2,-2)[line width=1.5pt] to [out=90, in=-90] (1,0);
   \draw[line width=1.5pt] (1,-4) to [out=90, in=-90] (2,-2);
   \draw[white, line width=6pt] (2,-4) to [out=90, in=-90] (1,-2);
   \draw[line width=1.5pt] (2,-4) to [out=90, in=-90] (1,-2);
   \node at (0,-4.25) {$\id$}; \node at (1,-4.25) {$f$}; \node at (2,-4.25) {$f$};
  \end{scope}
 \end{tikzpicture}
 \caption{An example of $2$-ary cloning on $\brW_2$. Here $f\in\brAut(\tree_2)$ satisfies the braided wreath recursion $f=\zeta(f,f)$. The picture shows that $(\zeta(\id,f))\clone_2^2 = (\zeta)\braidclone_2^2\phi^{(2)}(f)(\id,f,f)$. We use thick lines to indicate the strand getting cloned and the resulting strands.}
 \label{fig:braaut_clone}
\end{figure}
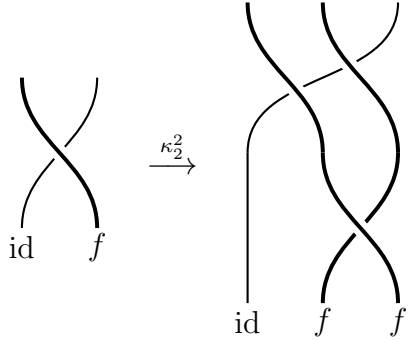

\begin{proposition}\label{prop:brW_n_cloning_system}
$((\brW_n)_{n\in\N},(\rho_n)_{n\in\N},(\clone_k^n)_{1\le k\le n})$ is a $d$-ary cloning system.
\end{proposition}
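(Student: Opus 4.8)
The plan is to verify the three cloning axioms (C1), (C2), (C3) directly for $((\brW_n)_{n\in\N},(\rho_n)_{n\in\N},(\clone_k^n)_{1\le k\le n})$, following the template of the proof of \cite[Proposition~2.4]{skipper21} for the non-braided case $(W_n)_{n\in\N}$, but tracking braid-group data in place of automorphism data. The key observation making this feasible is that the braided cloning map $\clone_k^n$ has exactly the same shape as the non-braided one, with $\braidclone_k^n$ (braid cloning) replacing $\symmclone_k^n$ (symmetric-group cloning) on the $B_n$-factor, with $\phi$ (braided wreath recursion) replacing $\rho$ (ordinary wreath recursion) on the $\brAut(\tree_d)$-factors, and with the monomorphism $B_d\to B_{n+d-1}$ ``insert $d$ strands at position $k$'' replacing the corresponding symmetric-group monomorphism. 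So essentially every identity needed will reduce to an identity that is already known: either the fact (Example~\ref{ex:braid_clone}) that $(B_n)_{n\in\N}$ with $(\braidclone_k^n)$ is a $d$-ary cloning system (this handles the ``$B_n$-part'' of each axiom), or the corresponding computation in the non-braided proof (whose combinatorics of strand/leaf bookkeeping is identical).

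Concretely, I would proceed as follows. First, for (C1) I would take $g=\beta(f_1,\dots,f_n)$ and $h=\gamma(h_1,\dots,h_n)$ in $\brW_n$, compute $gh$ using the wreath-product multiplication, namely $gh=\beta\gamma^{\beta}\,(f_1 h_{1},\dots)$ — being careful that the $B_n$-action on the tuple is via the image of $\beta$ in $S_n$, which is exactly what $\rho_n(\gamma)$-type indexing in (C1) is designed to record — and then expand both sides of $(gh)\clone_k^n = (g)\clone_{\rho_n(h)k}^n (h)\clone_k^n$. The $B_n$-coordinate reduces to (C1) for the braid cloning system $(\braidclone_k^n)$. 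The tuple coordinate reduces to compatibility of the braided wreath recursion with multiplication in $\brAut(\tree_d)$ (the analogue of the identity used in the non-braided case), together with keeping track that the $k$-th entry, when cloned, produces the braided wreath recursion of $f_k h_{?}$ split correctly; here the cocycle-type index shuffles are identical to those in \cite{skipper21}. For (C2), with $1\le k<\ell\le n$, I would expand $\clone_\ell^n\circ\clone_k^{n+d-1}$ and $\clone_k^n\circ\clone_{\ell+d-1}^{n+d-1}$; since $k$ and $\ell$ are distinct positions, the two clonings act on disjoint strands/entries, so this is again just (C2) for $(\braidclone_k^n)$ on the braid coordinate plus the (now trivial, since the entries being split are different) bookkeeping on the tuple, plus the commuting of the two monomorphisms $B_d\to B_{n+d-1}$ at disjoint insertion points. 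For (C3), the compatibility axiom, I note that $\rho_n$ factors through $W_n$ and the non-braided $\rho_n$, so $\rho_{n+d-1}((g)\clone_k^n)$ is literally $\rho_{n+d-1}$ applied to the non-braided clone of the image of $g$ in $W_n$; hence (C3) for $\brW_n$ follows immediately from (C3) for $W_n$, already established in \cite[Proposition~2.4]{skipper21}.

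The main obstacle, such as it is, will be (C1): it is the only axiom where the braid group's non-abelianness and the interaction between the $B_n$-action on the tuple and the wreath recursion genuinely have to be reconciled, and one must check that the braid $\phi^{(k)}(f_k)$ produced by cloning behaves correctly when multiplied — i.e.\ that $\phi^{(\cdot)}$ is compatible with the braid-cloning identity (C1) for $(\braidclone_k^n)$ under the index shift by $\rho_n(h)$. I expect this to come down to the naturality of the ``insert $d$ strands at position $k$'' monomorphism with respect to the braid cloning maps, which is visible from the strand pictures (cf.\ Figure~\ref{fig:braaut_clone}) but requires a careful formal statement. Once that compatibility is isolated as a lemma about braid groups alone, the rest of (C1) is the same index juggling as in the non-braided proof. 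I would therefore structure the write-up as: (i) recall/record the relevant identities for $(\braidclone_k^n)$ and for the braided wreath recursion; (ii) dispatch (C3) by the factorization through $W_n$; (iii) dispatch (C2) by disjointness of positions; (iv) do (C1) in detail, splitting into the $B_n$-coordinate computation and the $\brAut(\tree_d)$-tuple computation.
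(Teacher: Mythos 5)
Your plan matches the paper's proof essentially step for step: both mimic \cite[Proposition~2.4]{skipper21}, reduce the braid-coordinate identities to the known $d$-ary cloning system $(\braidclone_k^n)$ on braid groups, and isolate as the one genuinely new ingredient for (C1) the commutation of the inserted braid $\phi^{(k)}(f_{\gamma(k)})$ past $(\gamma)\braidclone_k^n$ (with the index shifting to $\gamma(k)$), which the paper verifies exactly as you predict, by observing that the $d$ cloned strands are parallel and are the only strands on which $\phi^{(k)}(f_{\gamma(k)})$ acts nontrivially. Your treatment of (C3) via the factorization through $W_n$ is a cosmetic variant of the paper's direct use of (C3) for the braid cloning system, resting on the same underlying compatibility of $\pi$ with the cloning maps.
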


\begin{proof}
The proof is similar to that of \cite[Proposition~2.4]{skipper21} about the non-braided situation, and some parts work in exactly the same way. Hence, in the course of this proof, we will sometimes just state that a certain step works analogously to the corresponding step in \cite[Proposition~2.4]{skipper21}. First we prove (C1) (cloning a product). Let $f=\beta(f_1,\dots,f_n)$ and $g=\gamma(g_1,\dots,g_n)$ be elements of $\brW_n$, so the product $fg$ equals $\beta\gamma(f_{\gamma(1)}g_1,\dots,f_{\gamma(n)}g_n)$. Here $B_n$ acts on $\{1,\dots,n\}$ via the projection $B_n\to S_n$, and the notation $\gamma(i)$ indicates this action. Similar to the proof of \cite[Proposition~2.4]{skipper21}, the left-hand side of (C1) is
\begin{align*}
(fg)\clone_k^n &= (\beta\gamma)\braidclone_k^n \phi^{(k)}(f_{\gamma(k)}g_k)\\
&\times (f_{\gamma(1)}g_1,\dots,f_{\gamma(k-1)}g_{k-1},f_{\gamma(k)}^{\phi(g_k)(1)}g_k^1,\dots,f_{\gamma(k)}^{\phi(g_k)(d)}g_k^d,f_{\gamma(k+1)}g_{k+1},\dots,f_{\gamma(n)}g_n) \text{.}
\end{align*}
Here the braided wreath recursions of $f_{\gamma(k)}$ and $g_k$ are $f_{\gamma(k)}=\phi(f_{\gamma(k)})(f_{\gamma(k)}^1,\dots,f_{\gamma(k)}^d)$ and $g_k=\phi(g_k)(g_k^1,\dots,g_k^d)$, so the braided wreath recursion of $f_{\gamma(k)}g_k$ is
\[
f_{\gamma(k)}g_k = \phi(f_{\gamma(k)}g_k)(f_{\gamma(k)}^{\phi(g_k)(1)}g_k^1,\dots,f_{\gamma(k)}^{\phi(g_k)(d)}g_k^d) \text{.}
\]
Now we need to show that the right-hand side of (C1), which is $(f)\clone_{\gamma(k)}^n(g)\clone_k^n$, equals the same thing. One can compute (similar to the proof of \cite[Proposition~2.4]{skipper21}) that this equals
\begin{align*}
&(\beta)\braidclone_{\gamma(k)}^n \phi^{(\gamma(k))}(f_{\gamma(k)}) (\gamma)\braidclone_k^n \phi^{(k)}(g_k) \\
&\times (f_1,\dots,f_{\gamma(k)-1},f_{\gamma(k)}^1,\dots,f_{\gamma(k)}^d,f_{\gamma(k)+1},\dots,f_n)^{(\gamma)\braidclone_k^n \phi^{(k)}(g_k)} \\
&\times (g_1,\dots,g_{k-1},g_k^1,\dots,g_k^d,g_{k+1},\dots,g_n) \text{,}
\end{align*}
where the superscript indicates conjugation in $\brW_{n+d-1}$. By the same argument as in the proof of \cite[Proposition~2.4]{skipper21}, the ``tuple parts'' of the left- and right-hand sides of (C1) are the same, so we only need to show that the ``braid parts'' are the same, i.e., that $(\beta\gamma)\braidclone_k^n \phi^{(k)}(f_{\gamma(k)}g_k) = (\beta)\braidclone_{\gamma(k)}^n \phi^{(\gamma(k))}(f_{\gamma(k)}) (\gamma)\braidclone_k^n \phi^{(k)}(g_k)$. Since we already know the $\vartheta_k^n$ define a $d$-ary cloning system on braid groups, we know $(\beta\gamma)\braidclone_k^n=(\beta)\vartheta_{\gamma(k)}^n(\gamma)\vartheta_k^n$, and since $\phi$ is a homomorphism this means it suffices to show that $(\gamma)\braidclone_k^n \phi^{(k)}(f_{\gamma(k)}) = \phi^{(\gamma(k))}(f_{\gamma(k)}) (\gamma)\braidclone_k^n$. To see this, note that the $k$th through $(k+d-1)$st strands of $(\gamma)\vartheta_k^n$ are all parallel to each other, and these are the only strands of $\phi^{(k)}(f_{\gamma(k)})$ that can braid non-trivially; see Figure~\ref{fig:C1_for_brW}.

Next, (C2) follows by an exactly analogous argument to the proof of \cite[Proposition~2.4]{skipper21}.

Finally we turn to (C3). Let $\beta(f_1,\dots,f_n)\in \brW_n$ and $i\ne k,k+1,\dots,k+d-1$. We need to show that $\rho_{n+d-1}((\beta(f_1,\dots,f_n))\clone_k^n)(i) = (\rho_n(\beta(f_1,\dots,f_n)))\symmclone_k^n(i)$. Setting $\sigma=\pi(\beta)$, the right-hand side just equals $(\sigma)\symmclone_k^n(i)$. The left-hand side equals
\[
\rho_{n+d-1}((\beta)\braidclone_k^n\phi^{(k)}(f_k)(f_1,\dots,f_{k-1},f_k^1,\dots,f_k^d,f_{k+1},\dots,f_n))(i) \text{,}
\]
which is $\pi((\beta)\braidclone_k^n\phi^{(k)}(f_k))(i)$. Since $\pi(\phi^{(k)}(f_k))(i)=i$ (by virtue of $i\ne k,k+1,\dots,k+d-1$), this equals $\pi((\beta)\braidclone_k^n)(i)$. Since $\pi$ plays the role of $\rho_n$ in the $d$-ary cloning system on the braid groups using the $d$-ary cloning maps $\vartheta_k^n$, by (C3) for that $d$-ary cloning system we know $\pi((\beta)\braidclone_k^n)(i) = (\sigma)\symmclone_k^n(i)$ as desired.
\end{proof}

\begin{figure}[htb]
\centering
\begin{tikzpicture}[line width=0.8pt]
  \draw
   (0,0) -- (0,-1)   (1,0) -- (1,-1)   (2,0) -- (2,-1)   (3,0) -- (3,-1)   (4,0) -- (4,-1)   (5,0) -- (5,-1)   (6,0) -- (6,-1);
  \draw[white, line width=4pt]
   (2,-1) to [out=-90, in=90] (3,-3)   (3,-1) to [out=-90, in=90] (4,-3)   (4,-1) to [out=-90, in=90] (5,-3);
  \draw[dashed]
   (2,-1) to [out=-90, in=90] (3,-3)   (3,-1) to [out=-90, in=90] (4,-3)   (4,-1) to [out=-90, in=90] (5,-3);
  \draw
   (-0.2,-1) -- (6.2,-1) -- (6.2,-3) -- (-0.2,-3) -- (-0.2,-1);
  \draw
   (0,-3) -- (0,-4)   (1,-3) -- (1,-4)   (2,-3) -- (2,-4)   (3,-3) -- (3,-4)   (4,-3) -- (4,-4)   (5,-3) -- (5,-4)   (6,-3) -- (6,-4);
  \draw
   (0,-4) -- (0,-6)   (1,-4) -- (1,-6)   (2,-4) -- (2,-6)   (3,-5) -- (3,-6)   (4,-5) -- (4,-6)  (5,-5) -- (5,-6)   (6,-4) -- (6,-6);
  \draw
   (2.8,-4) -- (5.2,-4) -- (5.2,-5) -- (2.8,-5) -- (2.8,-4);
  \node at (1,-2) {$(\gamma)\vartheta_3^5$};
  \node at (4,-4.5) {$\phi(f_{\gamma(3)})$};
  \node at (7,-3) {$=$};
  
  \begin{scope}[xshift=8cm]
   \draw[white, line width=4pt]
    (2,-1) to [out=-90, in=90] (3,-3)   (3,-1) to [out=-90, in=90] (4,-3)   (4,-1) to [out=-90, in=90] (5,-3);
   \draw[dashed]
    (2,-3) to [out=-90, in=90] (3,-5)   (3,-3) to [out=-90, in=90] (4,-5)   (4,-3) to [out=-90, in=90] (5,-5);
   \draw
    (0,0) -- (0,-3)   (1,0) -- (1,-3)   (2,0) -- (2,-1)   (3,0) -- (3,-1)   (4,0) -- (4,-1)   (5,0) -- (5,-3)   (6,0) -- (6,-3);
   \draw
    (1.8,-1) -- (4.2,-1) -- (4.2,-2) -- (1.8,-2) -- (1.8,-1);
   \draw
    (2,-2) -- (2,-3)   (3,-2) -- (3,-3)   (4,-2) -- (4,-3);
   \draw
    (-0.2,-3) -- (6.2,-3) -- (6.2,-5) -- (-0.2,-5) -- (-0.2,-3);
   \draw
    (0,-5) -- (0,-6)   (1,-5) -- (1,-6)   (2,-5) -- (2,-6)   (3,-5) -- (3,-6)   (4,-5) -- (4,-6)  (5,-5) -- (5,-6)   (6,-5) -- (6,-6);
   \node at (1,-4) {$(\gamma)\vartheta_3^5$};
   \node at (3,-1.5) {$\phi(f_{\gamma(3)})$};
  \end{scope}
\end{tikzpicture}
\caption{An example of the last step of the verification of (C1) in the proof of Proposition~\ref{prop:brW_n_cloning_system}. Here $d=3$, $k=3$, and $n=5$.}
\label{fig:C1_for_brW}
\end{figure}
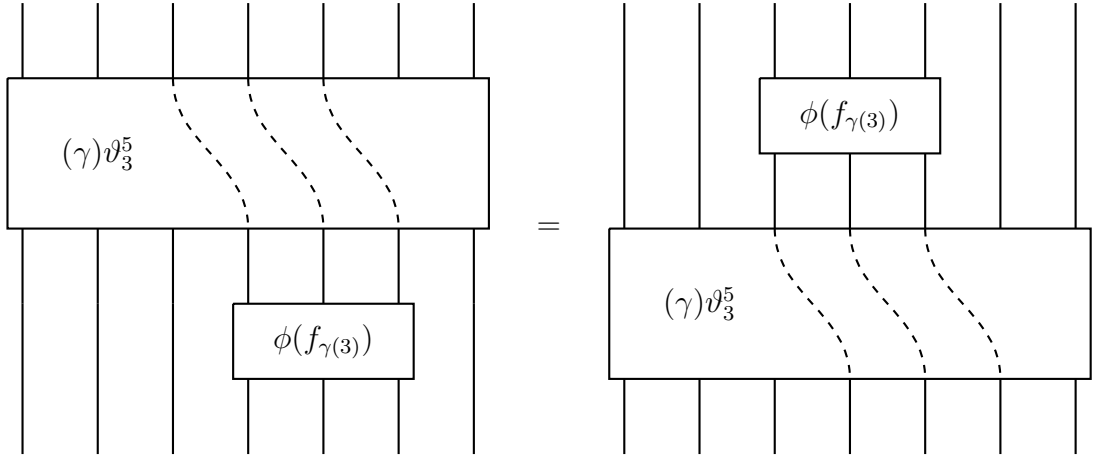

The Thompson-like group $\Thomp_d(\brW_*)$ is a braided version of $\Thomp_d(W_*)=\AAut(\tree_d)$, and we will denote it by $\brAAut(\tree_d)$.

Now for braided self-similar $G\le \brAut(\tree_d)$, consider the family $(B_n \wr G)_{n\in\N}$. Thanks to braided self-similarity, the restriction of $\clone_k^n$ to $B_n \wr G$ lands in $B_{n+d-1} \wr G$, which means the $d$-ary cloning system on the $\brW_n=B_n \wr \brAut(\tree_d)$ restricts to a $d$-ary cloning system on the $B_n \wr G$.

\begin{definition}[Braided R\"over--Nekrashevych group]\label{def:braided_ro_ne}
For a braided self-similar group $G\le \brAut(\tree_d)$, the \emph{braided R\"over--Nekrashevych group} for $G$ is
\[
\brV_d(G) \defeq \Thomp_d(B_*\wr G) \text{,}
\]
where $(B_n\wr G)_{n\in\N}$ is equipped with the above $d$-ary cloning system.
\end{definition}

Our main example is the following:

\begin{definition}[Braided R\"over group]
We define the \emph{braided R\"over group} to be the group $\brV_2(\brGrig)$.
\end{definition}

See Figure~\ref{fig:brR} for an example of an element of the braided R\"over group.

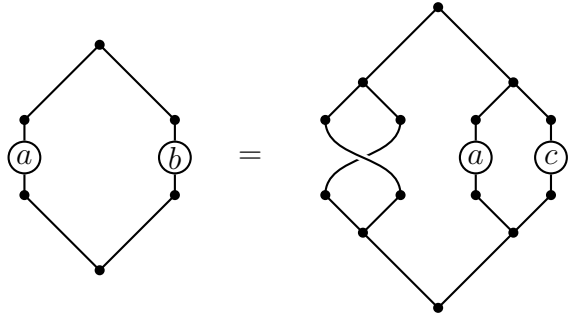
\begin{figure}[htb]
\centering
\begin{tikzpicture}[line width=0.8pt]
 \filldraw (0,0) circle (1.5pt)   (-1,-1) circle (1.5pt)   (1,-1) circle (1.5pt);
 \draw (-1,-1) -- (0,0) -- (1,-1);
 \filldraw (0,-3) circle (1.5pt)   (-1,-2) circle (1.5pt)   (1,-2) circle (1.5pt);
 \draw (-1,-2) -- (0,-3) -- (1,-2);
 
 \draw (-1,-1) -- (-1,-2)   (1,-1) -- (1,-2);
 
 \draw[white,fill=white] (-1,-1.5) circle (6 pt);
 \draw (-1,-1.5) circle (6 pt) node[text=black] {$a$};
 \draw[white,fill=white] (1,-1.5) circle (6 pt);
 \draw (1,-1.5) circle (6 pt) node[text=black] {$b$};
 
 \node at (2,-1.5) {$=$};
 
 \begin{scope}[xshift=4.5cm,yshift=0.5cm]
  \draw (-0.5,-1.5) to [out=-90, in=90] (-1.5,-2.5);
  \draw[line width=3pt, white] (-1.5,-1.5) to [out=-90, in=90] (-0.5,-2.5);
  \draw (-1.5,-1.5) to [out=-90, in=90] (-0.5,-2.5);
  
  \filldraw (0,0) circle (1.5pt)   (-1.5,-1.5) circle (1.5pt)   (1.5,-1.5) circle (1.5pt)   (-1,-1) circle (1.5pt)   (1,-1) circle (1.5pt)   (-0.5,-1.5) circle (1.5pt)   (0.5,-1.5) circle (1.5pt);
  \draw (-1.5,-1.5) -- (0,0) -- (1.5,-1.5)   (-0.5,-1.5) -- (-1,-1)   (0.5,-1.5) -- (1,-1);
  \filldraw (0,-4) circle (1.5pt)   (-1.5,-2.5) circle (1.5pt)   (1.5,-2.5) circle (1.5pt)   (-1,-3) circle (1.5pt)   (1,-3) circle (1.5pt)   (-0.5,-2.5) circle (1.5pt)   (0.5,-2.5) circle (1.5pt);;
  \draw (-1.5,-2.5) -- (0,-4) -- (1.5,-2.5)   (-0.5,-2.5) -- (-1,-3)   (0.5,-2.5) -- (1,-3);
  
  \draw (0.5,-1.5) -- (0.5,-2.5)   (1.5,-1.5) -- (1.5,-2.5);
  
  \draw[white,fill=white] (0.5,-2) circle (6 pt);
  \draw (0.5,-2) circle (6 pt) node[text=black] {$a$};
  \draw[white,fill=white] (1.5,-2) circle (6 pt);
  \draw (1.5,-2) circle (6 pt) node[text=black] {$c$};
 \end{scope}
\end{tikzpicture}
\caption{An element of the braided R\"over group. Here we draw a triple $(T_-,\beta(f_1,\dots,f_m),T_+)$ with $T_+$ upside down so that $\beta$ is a braid from the leaves of $T_-$ to the leaves of $T_+$, and the $f_i$ label the strands. The element equals $[\wedge,(a,b),\wedge]$, for $\wedge$ the tree with one caret. As indicated in the picture, after expansions this is the same element as $[T,\beta(1,1,a,c),T]$ for $T$ the result of adding a caret to each leaf of $\wedge$ and $\beta\in B_4$ the braid crossing the first strand over the second.}
\label{fig:brR}
\end{figure}

There is an obvious relationship between braided R\"over--Nekrashevych groups and (non-braided) R\"over--Nekrashevych groups, analogous to how the braided Thompson group $\brV$ surjects onto Thompson's group $V$. Given a braided R\"over--Nekrashevych group $\brV_d(G)$, we get a well defined epimorphism
\[
\pi\colon \brV_d(G)\to V_d(\pi(G)) \text{,}
\]
where $\pi(G)$ is the image of $G$ under $\pi\colon \brAut(\tree_d) \to \Aut(\tree_d)$. This epimorphism (which by abuse of notation we are also denoting by $\pi$) is given by sending $[T_-,\beta(f_1,\dots,f_n),T_+]$ to $[T_-,\pi(\beta)(\pi(f_1),\dots,\pi(f_n)),T_+]$. The $d$-ary cloning systems on $(B_n\wr G)_{n\in\N}$ and $(S_n\wr \pi(G))_{n\in\N}$ are clearly respected by $\pi$, so this map really is well defined.

\begin{remark}
The kernel of $\pi\colon \brV_d(G)\to V_d(\pi(G))$ consists of all $[T,\beta(f_1,\dots,f_n),T]$ such that $\beta$ is pure and each $f_i$ lies in the kernel of $\pi\colon \brAut(\tree_d)\to \Aut(\tree_d)$. By Lemma~\ref{lem:kernel_is_pure} each of these kernels is a direct product of copies of $PB_d$. Overall this means that the kernel of $\pi\colon \brV_d(G)\to V_d(\pi(G))$ is a direct limit of direct products of pure braid groups $PB_n$ with infinitely many copies of $PB_d$, with the direct limit informed by the cloning maps.
\end{remark}

\section{Finiteness properties}\label{sec:fin_props}

In this section we inspect finiteness properties of braided R\"over--Nekrashevych groups. The main result of this section is:

\begin{theorem}\label{thrm:braided_nekr_inherit_finprops}
Let $G\le \brAut(\tree_d)$ be braided self-similar. If $G$ is of type $\F_n$ then so is $\brV_d(G)$.
\end{theorem}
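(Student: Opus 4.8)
The plan is to use the Stein--Farley complex for the $d$-ary cloning system on $(B_n \wr G)_{n\in\N}$, which we will have constructed in general in Subsection~\ref{ssec:stein_farley}, and to run the standard Brown-criterion / Bestvina--Brady Morse theory argument on it. So the first step is to set up the Stein space $\Stein$ for $\brV_d(G)$: its vertices are (equivalence classes of) pairs $[T,g]$ where $T$ is a $d$-ary tree with $n$ leaves and $g\in B_n \wr G$, with the partial order given by expansions, and its cells come from intervals $[x,y]$ in this poset that are ``elementary'' (the tree $y$ is obtained from $x$ by adding at most one caret to each leaf). One checks $\Stein$ is contractible and that $\brV_d(G)$ acts on it cocompactly, with cell stabilizers. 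The key point is that the stabilizer of a vertex $[T,g]$ with $n$ leaves is isomorphic to $B_n \wr G$ (via the usual identification), and more generally all cell stabilizers are of the form $B_m \wr G$ for various $m$, possibly intersected with finitely many conjugates. Since $B_n$ is of type $\F_\infty$ (it is the mapping class group of a punctured disk, or more elementarily has a finite-dimensional classifying space) and $G$ is assumed of type $\F_n$, the wreath product $B_n \wr G = B_n \ltimes G^n$ is of type $\F_n$: $G^n$ is of type $\F_n$ as a finite direct product of type-$\F_n$ groups, and $B_n$ is of type $\F_\infty$, so the extension is of type $\F_n$. Thus all cell stabilizers are of type $\F_n$.

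Next I would invoke Brown's criterion: $\brV_d(G)$ is of type $\F_n$ provided it acts cocompactly on the contractible complex $\Stein$ with cell stabilizers of type $\F_n$, \emph{and} there is a filtration of $\Stein$ by cocompact $\brV_d(G)$-invariant subcomplexes $\Stein_{\le j}$ (sublevel sets of the number-of-leaves function, say) that is ``essentially $(n-1)$-connected'', meaning the connectivity of the pair $(\Stein_{\le j+1},\Stein_{\le j})$ goes to infinity as $j\to\infty$. By Bestvina--Brady Morse theory this reduces to showing that the \emph{descending links} of vertices in $\Stein$ (with respect to the number-of-leaves Morse function) are highly connected: specifically that the descending link of a vertex with $n$ leaves is $\lfloor \frac{n-2}{2}\rfloor$-connected, or something growing linearly in $n$. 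The descending link decomposes as a join-type combination of a ``downward part'' coming from un-cloning carets (which is governed by a complex of ``elementary forests'' and is known to be highly connected — this is exactly the computation from \cite{skipper21,witzel18}, where the relevant complex of forests or the matching complex on a caret is shown to be increasingly connected) and a part coming from the group $B_n \wr G$; crucially, since we only need connectivity up to a range and the group-theoretic part of the descending link is a simplex (or contributes trivially to connectivity issues in the braided cloning setup, just as in the non-braided self-similar case), the connectivity of the descending link is controlled purely by the forest/caret complex, which does not depend on $G$ at all. This is the same mechanism by which $V_d(G)$ inherits finiteness properties from $G$ in \cite{skipper21}, and by which $\brV_d$ is shown to be of type $\F_\infty$ in \cite{skipperwu}.

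Assembling these: contractibility of $\Stein$ plus cocompactness of the filtration plus stabilizers of type $\F_n$ plus descending links that are $(n-1)$-connected for all but finitely many levels yields, via Brown's criterion, that $\brV_d(G)$ is of type $\F_n$, which is the claim. The main obstacle I anticipate is the descending-link connectivity computation: one must verify that the presence of the braid-group factors $B_n$ in the cloning system (as opposed to $S_n$) does not degrade the connectivity of the relevant complex of forests/carets, and that the group-theoretic contribution to the descending link really does split off as a join factor that is a simplex (equivalently, that the ``elementary'' upward intervals above a fixed vertex decompose as a product of a forest part and a group part, compatibly with the Morse function). For the braided Higman--Thompson group this is handled in \cite{skipperwu}; here one needs the analogous statement for $B_n \wr G$, but since the Morse function only sees the tree/forest data and the cloning maps act on the braid and group coordinates in a ``parallel-strand'' fashion that does not interfere with un-cloning, the argument goes through essentially verbatim. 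A secondary, more bookkeeping-level issue is making sure the general Stein--Farley construction of Subsection~\ref{ssec:stein_farley} applies to this cloning system — but the cloning system axioms were verified in Proposition~\ref{prop:brW_n_cloning_system}, and restriction to $B_n \wr G$ for braided self-similar $G$ is immediate, so this is routine.
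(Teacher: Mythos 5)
Your overall architecture matches the paper's: construct the Stein--Farley complex for the $d$-ary cloning system on $(B_m\wr G)_{m\in\N}$, check contractibility, cocompactness of the sublevel sets of the number-of-feet function, and that cell stabilizers are finite-index subgroups of the vertex stabilizers $B_m\wr G$ (which are of type $\F_n$ exactly as you argue), then reduce via Brown's criterion plus Bestvina--Brady Morse theory to showing that descending links are $(n-1)$-connected for all but finitely many numbers of feet. All of that is correct and is what the paper does.

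The gap is in the descending-link step, which you flag as the main obstacle but then resolve with a mechanism that does not work. The descending link of a vertex with $m$ feet is \emph{not} a join of a ``forest/caret part'' and a ``group part that is a simplex,'' and its connectivity is not controlled by the complex of elementary forests (equivalently, the $d$-matching complex of the linear graph). A vertex of the descending link is a class $[1,\beta(f_1,\dots,f_m),E]_G$ with $E$ a single $d$-caret, and whether two such vertices span an edge depends essentially on the braid $\beta$: the braid group moves the positions being merged, so the upward intervals above a fixed vertex do not factor into forest data times group data. (That join splitting is the mechanism for fully compatible, non-braided cloning systems in \cite{witzel18}; braided cloning systems are precisely where it fails.) The actual argument maps the descending link $\dlmodel_d^m(B_*\wr G)$ onto the $d$-marked-point disk complex $\diskcpx_d(\surface_{1,m}^0)$ of a disk with $m$ marked points, by converting the elementary forest into a tubular neighborhood of a matching, forgetting the $G$-labels, and pushing the disks through $\beta^{-1}$. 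One must then check that this map is well defined on equivalence classes --- this uses that the local braids $\phi(g_i)$ produced by cloning are supported inside the disks, which is where the cloning structure genuinely enters --- and that it is a \emph{complete join} in the sense of Hatcher--Wahl, so that weak Cohen--Macaulayness of the disk complex (a result of \cite{skipperwu}, giving connectivity $\lfloor\frac{m+1}{2d-1}\rfloor-2$) pulls back to the descending link. The fibers of this map carry all the $G$-data and much of the braid data; it is the complete-join property, not a join decomposition with a simplex factor, that renders the $G$-dependence harmless. Without identifying the disk complex as the target and verifying the complete-join property, the connectivity claim for the descending links is unsupported.
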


The case when $G$ is braided self-identical was already proved in \cite{skipperwu}, where it was shown that for braided self-identical $G$ the converse of Theorem~\ref{thrm:braided_nekr_inherit_finprops} is also true. (In general the converse of Theorem~\ref{thrm:braided_nekr_inherit_finprops} is not always true, as our Theorem~\ref{thrm:braided_roever_F_infty} will show.)

\subsection{Stein--Farley complexes}\label{ssec:stein_farley}

The starting point for deducing finiteness properties for a Thompson-like group is often to construct a so called Stein--Farley complex on which the group can act. For groups of the form $\Thomp_2(G_*)$, i.e., those arising from ($2$-ary) cloning systems, this was done in \cite{witzel18}. For groups of the form $\Thomp_d(S_*\wr G)$, i.e., R\"over--Nekrashevych groups, the Stein--Farley construction was done in \cite{skipper21}. For arbitrary groups of the form $\Thomp_d(G_*)$, i.e., those arising from $d$-ary cloning systems in general, the Stein--Farley construction has not technically been done in the literature, but it is easy to mimic the $2$-ary construction, and this is the goal of this subsection. Also see \cite{zaremsky21} for an introductory take on the situation when $d=2$ and $G_m=\{1\}$ for all $m$, i.e., for Thompson's group $F$. The name Stein--Farley complex pays homage to Stein \cite{stein92} and Farley \cite{farley03}, who constructed complexes like these for the classical Thompson groups and some close relatives.

Given a $d$-ary cloning system $((G_m)_{m\in\N},(\rho_m)_{m\in\N},(\clone_k^m)_{k\le m})$ we will construct a cube complex, denoted by $\Stein_d(G_*)$, called the \emph{Stein--Farley complex} for the $d$-ary cloning system. This is done in a number of steps.

\textbf{A groupoid:} First we consider the set of equivalence classes $[F_-,g,F_+]$, where $F_-$ is a $d$-ary forest, say with $m$ leaves (and some number of roots), $g$ is an element of $G_m$, and $F_+$ is a $d$-ary forest with $m$ leaves (and some number of roots). The equivalence relation on such triples is given by the expansion and reduction moves, as with elements of $\Thomp_d(G_*)$. This set is a groupoid; two elements $[F_-,g,F_+]$ and $[E_-,h,E_+]$ can be multiplied whenever the number of roots of $F_+$ equals the number of roots of $E_-$. In this case, up to expansions we can assume $F_+=E_-$, and the multiplication in this groupoid works analogously to the multiplication in $\Thomp_d(G_*)$. Multiplication is well defined in the groupoid for all the same reasons that it is well defined in $\Thomp_d(G_*)$. Denote this groupoid by $\Groupoid_d(G_*)$.

\textbf{A poset:} Now we restrict to equivalence classes of triples of the form $[T_-,g,F_+]$ for $T_-$ a $d$-ary tree, and mod out an additional equivalence relation given by right multiplication by elements of the form $[1,g',1]$, for $1$ a trivial forest with some number of roots, $m$, and $g'\in G_m$. Denote the resulting equivalence classes by $[T_-,g,F_+]_G$, and write $\Poset_d(G_*)$ for the set of all of them.  Given $[T_-,g,F_+]_G\in \Poset_d(G_*)$, say with $F_+$ having $m$ roots, and a groupoid element of the form $[F,1,1]$ for $F$ a $d$-ary forest with $m$ roots, it makes sense to consider the product $[T_-,g,F_+][F,1,1]$ and the equivalence class $[T_-,g,F_+][F,1,1]_G$. We define a partial order $\le$ on $\Poset_d(G_*)$ by declaring
\[
[T_-,g,F_+]_G\le [T_-,g,F_+][F,1,1]_G \text{.}
\]
(Note that thanks to the equivalence relation, in fact we have
\[
[T_-,g,F_+]_G\le [T_-,g,F_+][1,g',1][F,1,1]_G
\]
for any relevant $g'$.) It is easy to see that $\le$ is reflexive, transitive, and antisymmetric, hence really is a partial order. It is also clear that $\Poset_d(G_*)$ with $\le$ is a directed poset, since for any $[T_-,g,F_+]_G$ we have $[T_-,g,F_+]_G \le [T_-,g,1]_G=[T_-,1,1]_G$, and any two $[T_-,1,1]_G$ and $[T_-',1,1]_G$ have an upper bound. Hence the geometric realization $|\Poset_d(G_*)|$ is contractible.

\textbf{A cube complex:} Now we construct the Stein--Farley complex $\Stein_d(G_*)$, which is a cube complex having a natural subdivision identifiable with a certain subcomplex of $|\Poset_d(G_*)|$. The vertex set of $\Stein_d(G_*)$ is the whole vertex set of $|\Poset_d(G_*)|$, namely $\Poset_d(G_*)$. Let $E$ be a $d$-ary forest whose trees each have at most one $d$-caret; call such a $d$-ary forest \emph{elementary}. If $E$ is an elementary $d$-ary forest with one non-trivial tree and with the same number of roots as $F_+$, we can consider the product $[T_-,g,F_+][E,1,1]$ in the groupoid $\Groupoid_d(G_*)$. We declare that the vertices $[T_-,g,F_+]_G$ and $[T_-,g,F_+][E,1,1]_G$ span an edge in $\Stein_d(G_*)$. If $[T_-,g,F_+]_G\le [T_-,g,F_+][E,1,1]_G$ for $E$ elementary, write
\[
[T_-,g,F_+]_G\preceq [T_-,g,F_+][E,1,1]_G \text{.}
\]
More generally, if $E$ is any elementary $d$-ary forest with the same number of roots as $F_+$, then all the vertices of the form $[T_-,g,F_+][E',1,1]_G$, for $E'$ obtained from $E$ by replacing some number of $d$-carets with trivial trees, span the $1$-skeleton of a cube in $\Stein_d(G_*)$, and we declare that there is a cube in $\Stein_d(G_*)$ with this $1$-skeleton. All of this is well defined up to the equivalence relation. Any non-empty intersection of cubes is a common face of each of them, so $\Stein_d(G_*)$ really is a cube complex. Within a given cube, the geometric realization of the finite subposet of $\Poset_d(G_*)$ given by the vertices of the cube is naturally a subdivision of the cube. Hence there is a natural subdivision of $\Stein_d(G_*)$ that is identifiable with a subcomplex of $|\Poset_d(G_*)|$.

\textbf{A Morse function:} Let
\[
h\colon \Stein_d(G_*)^{(0)}\to\N
\]
be the function sending $[T_-,g,F_+]_G$ to the number of roots of $F_+$, and call this the \emph{number of feet} of this vertex. Note that adjacent vertices have distinct $h$ values. By the construction of $\Stein_d(G_*)$, it is clear that $h$ can be extended to a map $h\colon \Stein_d(G_*)\to\R$ that restricts to an affine map on each cube. Hence, $h$ satisfies all the requirements to be a discrete Morse function, in the sense of Bestvina--Brady \cite{bestvina97}. In particular we will be allowed to use discrete Morse theory when it comes up later. The function $h$ also comes into play when proving that $\Stein_d(G_*)$ is contractible, which we do shortly.

\textbf{Upward-local finiteness:} The main advantage of $\Stein_d(G_*)$ over $|\Poset_d(G_*)|$ is that the former is what we will call ``upward-locally finite'', as we now explain. Note that there are only finitely many elementary $d$-ary forests with a given number of roots. Also note that for any $[1,g,1],[F,1,1]\in \Groupoid_d(G_*)$ such that the product $[1,g,1][F,1,1]$ makes sense, there exist $F'$ and $g'$ such that $[1,g,1][F,1,1]=[F',1,1][1,g',1]$. More precisely, $F'$ and $g'$ are such that upon expanding we get $[1,g,1]=[F',g',F]$, and then
\[
[1,g,1][F,1,1]=[F',g',F][F,1,1]=[F',g',1]=[F',1,1][1,g',1]\text{.}
\]
In particular, given any vertex $x=[T_-,g,F_+]_G$ of $\Stein_d(G_*)$, every vertex $y$ with $x\preceq y$ is of the form $y=[T_-,g,F_+][E,1,1]_G$ for some elementary $d$-ary forest $E$. (Indeed, \emph{a priori} $y$ is of the form $[T_-,g,F_+][1,h,1][E,1,1]_G$, but the above shows that we can ignore the $[1,h,1]$ factor for the purposes of characterizing $y$.) We conclude that for any $x$ there exist only finitely many $y$ with $x\preceq y$. We refer to this property by saying that $\Stein_d(G_*)$ is \emph{upward-locally finite}.

\textbf{Contractibility:} To see that $\Stein_d(G_*)$ is contractible, we will show that the inclusion $\Stein_d(G_*)\to |\Poset_d(G_*)|$ is a homotopy equivalence. Using the usual notation of open and closed intervals in posets, every simplex of $|\Poset_d(G_*)|$ that is not in $\Stein_d(G_*)$ lies in the realization of a closed interval $[x,y]$ for $x=[T_-,g,F_+]_G\le y=[T_-,g,F_+][F,1,1]_G$ with $F$ a non-elementary $d$-ary forest. Hence it suffices to show that we can build up from $\Stein_d(G_*)$ to $|\Poset_d(G_*)|$ by gluing in the realizations of such closed intervals in some way that never changes the homotopy type. Using the above notation, the order we will use is in increasing order of the quantity $h(y)-h(x)$. Thus when we glue in $|[x,y]|$, we do so along $|[x,y)|\cup|(x,y]|$. This is the suspension of $|(x,y)|$, so it suffices to see that $|(x,y)|$ is contractible. For this we can mimic the proof of \cite[Lemma~4.7]{witzel18}, which is the $d=2$ case.  Recall that a poset $(Y, \sqsubseteq)$ is called \emph{conically contractible} if there is a $y_0$ in $Y$ and a poset map $g: Y \rightarrow Y$ such that $z \sqsubseteq g(z) \sqsupseteq y_0$ for all $z$ in $Y$. A consequence of a poset being conically contractible is that its geometric realization is contractible. Since a poset and its opposite poset have isomorphic geometric realizations, we can also use the criterion $z \sqsupseteq g(z) \sqsubseteq y_0$. See the discussion in \cite[Section 1.5]{quillen78} for more details. Now given any $z \in (x,y]$, define $g(z)$ to be the largest element of $[x,z]$ such that $x\preceq g(z)$ (the idea is to replace any $d$-ary forest with its unique largest elementary subforest). By our hypothesis, $g(z)$ is in  $[x,y)$ and it is also clearly in $(x,y]$ so we have $g(z)\in (x,y)$. Let $y_0=g(y).$ Note that for any $z\in (x,y)$, we have $g(z) \le y_0$. Therefore, $z \ge g(z) \le y_0$ and $(x,y)$ is conically contractible.

\textbf{Action and stabilizers:} The group $\Thomp_d(G_*)$ acts on $\Stein_d(G_*)$ by left multiplication, which is well defined since the equivalence relation defining vertices of $\Stein_d(G_*)$ and the partial order dictating which vertices span cubes are both given by right multiplication. Note too that $h$ is invariant under this action. We claim that the stabilizer of a vertex with $h$ value $m$ is isomorphic to $G_m$. Given such a vertex $[T_-,g,F_+]_G$ and an element $[U_-,h,U_+]$ of $\Thomp_d(G_*)$, we have $[U_-,h,U_+][T_-,g,F_+]_G=[T_-,g,F_+]_G$ if and only if $[F_+,g^{-1},T_-][U_-,h,U_+][T_-,g,F_+]=[1,h',1]$ for some $h'\in G_m$ (since $F_+$ has $m$ roots, the ``$1$'' here is the trivial forest with $m$ roots, and $h'\in G_m$). Thus $[U_-,h,U_+]\mapsto h'$ provides the desired isomorphism from the vertex stabilizer to $G_m$. Now we claim that any cube stabilizer is a finite index subgroup of a vertex stabilizer. Consider a cube in $\Stein_d(G_*)$, say with $x$ its unique vertex with minimum $h$ value and $y$ its unique vertex with maximal $h$ value. Since the action preserves $h$, the cube stabilizer lies in the stabilizer of the vertex $x$ (and that of $y$). Since $\Stein_d(G_*)$ is upward-locally finite, $\Symm(\{z\mid x\preceq z\})$ is finite, so the kernel of $\Stab(x)\to \Symm(\{z\mid x\preceq z\})$ has finite index in $\Stab(x)$. This kernel clearly fixes our cube pointwise, so we conclude that the stabilizer of the cube has finite index in the stabilizer of $x$.

\textbf{Cocompactness:} Let $\Stein_d(G_*)^{h\le m}$ be the full subcomplex of $\Stein_d(G_*)$ spanned by vertices with at most $m$ feet. Since $h$ is invariant under the action of $\Thomp_d(G_*)$, all the $\Stein_d(G_*)^{h\le m}$ are stabilized by this action. Given vertices $[T_-,g,F_+]_G$ and $[U_-,h,E_+]_G$ with the same $h$ value, the product $[U_-,h,E_+][F_+,g^{-1},T_-]$ exists, lies in $\Thomp_d(G_*)$, and takes $[T_-,g,F_+]_G$ to $[U_-,h,E_+]_G$. Hence $\Thomp_d(G_*)$ is transitive on vertices of a given $h$ value. Since $\Stein_d(G_*)$ is upward-locally finite, and since the vertices of any given $\Stein_d(G_*)^{h\le m}$ only have finitely many $h$ values, this shows that the action of $\Thomp_d(G_*)$ on any $\Stein_d(G_*)^{h\le m}$ is cocompact.

Let us summarize all of the above:

\begin{proposition}\label{prop:stein_farley}
The group $\Thomp_d(G_*)$ acts on the contractible cube complex $\Stein_d(G_*)$ with cube stabilizers isomorphic to finite index subgroups of the $G_m$. The action on each $\Stein_d(G_*)^{h\le m}$ is cocompact. \qed
\end{proposition}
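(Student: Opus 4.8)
Essentially all of the work has already been carried out in the construction of $\Stein_d(G_*)$ above, so the plan is to collect the relevant structural facts and verify that they assemble into the four assertions. From the construction, $\Stein_d(G_*)$ is a cube complex (any nonempty intersection of cubes is a common face of each), and it carries a left multiplication action of $\Thomp_d(G_*)$ because both the equivalence relation defining its vertices and the covering relation $\preceq$ governing its cubes are defined purely by right multiplication; this action preserves the foot-count function $h$.

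For contractibility, the plan is to show that the inclusion $\Stein_d(G_*)\hookrightarrow|\Poset_d(G_*)|$ is a homotopy equivalence and then invoke the contractibility of $|\Poset_d(G_*)|$, which holds because $\Poset_d(G_*)$ is a directed poset. Every simplex of $|\Poset_d(G_*)|$ not already in $\Stein_d(G_*)$ lies in the realization of a closed interval $[x,y]$ with $y$ obtained from $x$ by attaching a non-elementary $d$-ary forest; gluing in these intervals in order of increasing $h(y)-h(x)$ attaches each $|[x,y]|$ along $|[x,y)|\cup|(x,y]|$, which is the suspension of $|(x,y)|$, so it suffices that each open interval $(x,y)$ is contractible. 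This is handled by conical contractibility: sending $z\in(x,y]$ to the largest element of $[x,z]$ lying above $x$ in the $\preceq$-order (i.e.\ replacing a forest by its largest elementary subforest) defines a poset map $g$ landing in $(x,y)$ with $z\ge g(z)\le g(y)$ for all $z$, as in the $d=2$ case \cite[Lemma~4.7]{witzel18}. I expect this contractibility step to be the only genuinely substantive part; the rest is bookkeeping.

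For stabilizers, the plan is: for a vertex $[T_-,g,F_+]_G$ with $h$-value $m$, an element $[U_-,h,U_+]$ fixes it if and only if $[F_+,g^{-1},T_-][U_-,h,U_+][T_-,g,F_+]=[1,h',1]$ for some $h'\in G_m$, and $[U_-,h,U_+]\mapsto h'$ is a bijective homomorphism onto $G_m$, identifying the vertex stabilizer with $G_m$. For a cube with $x$ its unique vertex of minimal $h$-value, the cube stabilizer lies in $\Stab(x)$ since $h$ is preserved; upward-local finiteness of $\Stein_d(G_*)$ makes the set $\{z\mid x\preceq z\}$ finite, so the kernel of $\Stab(x)\to\Symm(\{z\mid x\preceq z\})$ has finite index in $\Stab(x)$ and fixes the cube pointwise, whence the cube stabilizer has finite index in $\Stab(x)\cong G_m$.

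Finally, for cocompactness of the action on $\Stein_d(G_*)^{h\le m}$: given two vertices with the same $h$-value, the element $[U_-,h,E_+][F_+,g^{-1},T_-]$ of $\Thomp_d(G_*)$ carries one to the other, so the action is transitive on vertices of each fixed foot-count; since $\Stein_d(G_*)^{h\le m}$ realizes only finitely many foot-counts and $\Stein_d(G_*)$ is upward-locally finite, finitely many vertex orbits suffice to cover it, giving cocompactness. Assembling these observations yields the proposition. \qed
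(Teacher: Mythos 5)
Your proposal is correct and follows essentially the same route as the paper: contractibility via the inclusion into $|\Poset_d(G_*)|$ being a homotopy equivalence (gluing intervals in order of $h(y)-h(x)$ and using conical contractibility of the open intervals via the largest elementary subforest), vertex stabilizers identified with $G_m$ and cube stabilizers of finite index via upward-local finiteness, and cocompactness from transitivity on vertices of fixed $h$-value. No gaps.
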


\begin{remark}
In \cite{skipper21} there is a more general construction given for a self-similar group $G$, called the \emph{$H$-Stein--Farley complex} for $H\le G$, with the $H=G$ case recovering the original Stein--Farley complex. One could also fully generalize the $H$-Stein--Farley complexes from \cite{skipper21} as we have just done in the $H=G$ case, to some sort of $(H_*)$-Stein--Farley complexes for $H_m\le G_m$. This would be quite technical and tedious in general, and besides getting the braided R\"over group to be $\F_\infty$ it is unclear what this would buy us at the moment. Hence we will do this (in the following subsection) just for the braided R\"over group, where we can follow the comparatively easier roadmap from \cite{belk16}. 
\end{remark}

\subsection{Descending links}\label{ssec:dlks}

In this subsection and the next, we prove Theorem~\ref{thrm:braided_nekr_inherit_finprops}. In a now-standard approach, we will combine Proposition~\ref{prop:stein_farley} with Brown's Criterion and discrete Morse theory to reduce the problem to an analysis of descending links.

First let us recall some background on discrete Morse theory and descending links in general. Let $Y$ be an affine cell complex, in the sense of \cite{bestvina97}. Let $h\colon Y\to\R$ be a map such that the image of the vertex set of $Y$ is a closed, discrete subset of $\R$. If the restriction of $h$ to every positive-dimensional cell is a non-constant affine map, then we call $h$ a \emph{Morse function}. These conditions ensure that for every cell there is a unique vertex at which $h$ achieves its maximum value on that cell. The \emph{descending star} $\dst v$ of a vertex $v$ is the subcomplex of $Y$ consisting of all cells for which $v$ is this vertex with maximum $h$ value. The \emph{descending link} $\dlk v$ of $v$ is the link of $v$ in $\dst v$, i.e., the space of directions out of $v$ along which $h$ strictly decreases. (One could analogously define ascending stars and links, but here we will only use the descending version.)

The point of Morse theory is that an understanding of the descending links can translate to an understanding of the whole complex. The following essentially combines the Morse Lemma from \cite{bestvina97} with Brown's Criterion from \cite{brown87}, to produce a sufficient condition for a group to be of type $\F_n$. See \cite[Lemma~3.1]{skipper21} for an $\F_\infty$ version of the following, which works exactly analogously for $\F_n$.

\begin{cit}[Brown's Criterion plus discrete Morse theory]\label{cit:brown_morse}
Let $\Gamma$ be a group acting cellularly on an $(n-1)$-connected CW complex $X$. Suppose the stabilizer of any $p$-cell is of type $\F_{n-p}$. Let $h\colon X\to \R$ be a $\Gamma$-invariant Morse function such that the sublevel complexes $X^{h\le m}$ are $\Gamma$-cocompact for each $m\in\R$. If there exists $t\in\R$ such that for all $x\in X^{(0)}$ with $h(x)\ge t$ we have that $\dlk x$ is $(n-1)$-connected, then $\Gamma$ is of type $\F_n$.
\end{cit}

Now let us discuss descending links in $\Stein_d(G_*)$. Since $\Stein_d(G_*)$ is a cube complex, the link of a vertex $v$ is a simplicial complex, with a $k$-simplex for each $(k+1)$-cube containing $v$. The descending link is the subcomplex whose simplices correspond to cubes for which the vertex is the one with maximum $h$ value, i.e., the most feet. Thus, a $k$-simplex in $\dlk v$, say with $h(v)=m$, is represented by $v[1,g,E]_G$, for $g$ some element of $G_m$ and $E$ some elementary $d$-ary forest with $m$ leaves and $k+1$ carets. The face relation is given by removing carets from $E$. Note that the barycentric subdivision of $\dlk v$ is naturally a subcomplex of $|\Poset_d(G_*)|$, namely the full subcomplex spanned by all the $v[1,g,E]_G$.

There is a convenient model for the descending links. Note that, up to the action of $\Thomp_d(G_*)$, $\dlk v$ only depends on $h(v)$. Thus the following complex is isomorphic to every descending link of a vertex with $m$ feet:

\begin{definition}[Model for descending link]\label{def:dlk_model}
Let $\dlmodel_d^m(G_*)$ be the simplicial complex with a $k$-simplex for each $[1,g,E]_G$ for $g\in G_m$ and $E$ an elementary $d$-ary forest with $m$ leaves and $k+1$ carets, with face relation given by removing carets from $E$.
\end{definition}

The culmination of Subsections~\ref{ssec:stein_farley} and~\ref{ssec:dlks} is the following, which holds for any $d$-ary cloning system:

\begin{proposition}\label{prop:fin_props_machine}
Suppose $G_m$ is of type $\F_n$ for all $m$, and that $\dlmodel_d^m(G_*)$ is $(n-1)$-connected for all but finitely many $m$. Then $\Thomp_d(G_*)$ is of type $\F_n$.
\end{proposition}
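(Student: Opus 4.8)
The plan is to apply Citation~\ref{cit:brown_morse} (Brown's Criterion plus discrete Morse theory) directly, taking $\Gamma = \Thomp_d(G_*)$, $X = \Stein_d(G_*)$ the Stein--Farley complex constructed in Subsection~\ref{ssec:stein_farley}, and $h\colon \Stein_d(G_*)\to\R$ the number-of-feet function. Three of the four hypotheses of that citation are already in hand: by Proposition~\ref{prop:stein_farley} the complex $\Stein_d(G_*)$ is contractible, hence $(n-1)$-connected; the paragraph ``A Morse function'' in Subsection~\ref{ssec:stein_farley} shows that $h$ is a $\Gamma$-invariant Morse function (it takes values in $\N$, a closed discrete subset of $\R$, restricts to an affine map on each cube, and is non-constant on positive-dimensional cubes since adjacent vertices have distinct $h$-values); and Proposition~\ref{prop:stein_farley} also gives that each sublevel complex $\Stein_d(G_*)^{h\le m}$ is $\Gamma$-cocompact.

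For the cell-stabilizer hypothesis, recall from Proposition~\ref{prop:stein_farley} that the stabilizer of any $p$-cube of $\Stein_d(G_*)$ is isomorphic to a finite-index subgroup of one of the $G_m$. Since type $\F_n$ is a commensurability invariant --- it is inherited by finite-index subgroups and by finite-index overgroups, a standard fact --- and each $G_m$ is of type $\F_n$ by hypothesis, every cube stabilizer is of type $\F_n$, hence in particular of type $\F_{n-p}$ for every $p\ge 0$. This verifies the stabilizer condition.

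It then remains only to supply the threshold $t$ for the descending links. As recorded in Subsection~\ref{ssec:dlks}, the descending link $\dlk v$ of a vertex $v$ with $h(v) = m$ is isomorphic to the model complex $\dlmodel_d^m(G_*)$. By hypothesis, $\dlmodel_d^m(G_*)$ is $(n-1)$-connected for all but finitely many $m$; I would let $t$ be any real number strictly greater than all of the exceptional values of $m$ (and $t$ arbitrary, say $t=0$, if there are none). Then for every vertex $x\in \Stein_d(G_*)^{(0)}$ with $h(x)\ge t$ the integer $h(x)$ is not exceptional, so $\dlk x \cong \dlmodel_d^{h(x)}(G_*)$ is $(n-1)$-connected. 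Citation~\ref{cit:brown_morse} now yields that $\Thomp_d(G_*)$ is of type $\F_n$, as desired.

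In this argument there is no serious obstacle: essentially all of the content has been packaged into Proposition~\ref{prop:stein_farley} (contractibility, the Morse function, cocompactness, and the structure of cube stabilizers), and the statement is an assembly of those facts with Citation~\ref{cit:brown_morse}. The only points requiring any care --- both routine --- are the invariance of type $\F_n$ under commensurability and the observation that a finite exceptional set of values of $m$ can be cleared by a single threshold $t$.
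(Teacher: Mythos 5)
Your proposal is correct and follows essentially the same route as the paper's proof: apply Citation~\ref{cit:brown_morse} to the action on $\Stein_d(G_*)$ with the Morse function $h$, citing Proposition~\ref{prop:stein_farley} for contractibility, cocompactness of sublevel sets, and the identification of cube stabilizers with finite-index subgroups of the $G_m$, and using the hypothesis on $\dlmodel_d^m(G_*)$ for the descending links. The extra details you supply (commensurability invariance of type $\F_n$ and the explicit choice of threshold $t$) are exactly the routine points the paper leaves implicit.
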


\begin{proof}
We look at $\Thomp_d(G_*)$ acting on $\Stein_d(G_*)$, with the Morse function $h$, and verify the requirements from Citation~\ref{cit:brown_morse}. By Proposition~\ref{prop:stein_farley}, $\Stein_d(G_*)$ is contractible (hence $(n-1)$-connected), the stabilizer of any $p$-cube is isomorphic to a finite index subgroup of some $G_m$, so is of type $\F_n$ (hence $\F_{n-p}$) by assumption, and the sublevel complexes are all cocompact. Since $\dlmodel_d^m(G_*)$ is $(n-1)$-connected for all but finitely many $m$, the statement about descending links holds, and so we are done.
\end{proof}

\subsection{Descending links in the braided case}\label{ssec:braid_dlks}

Now we return to the special case from Subsection~\ref{ssec:braid_aaut}. We have a braided self-similar group $G\le \brAut(\tree_d)$ of type $\F_n$, and need to prove that $\brV_d(G)=\Thomp_d(B_*\wr G)$ is of type $\F_n$. As indicated by Proposition~\ref{prop:fin_props_machine}, the key thing to show is that $\dlmodel_d^m(B_*\wr G)$ is $(n-1)$-connected for all but finitely many $m$. To analyze the higher connectivity of $\dlmodel_d^m(B_*\wr G)$, we will use a strategy that essentially comes from \cite{bux16} in the case when $G=\{1\}$ and $d=2$, and more generally from \cite{skipperwu} in the case when $G$ is braided self-identical.

The idea is to map $\dlmodel_d^m(B_*\wr G)$ to a complex that is easier to understand. We will use the following complex, considered in \cite{skipperwu}.

\begin{definition}[$d$-marked point disk complex]
Let $\surface_{b,m}^g$ be a surface with $b$ boundary components, $m$ marked points, and genus $g$. The \emph{$d$-marked-point-disk complex} $\diskcpx_d(\surface_{b,m}^g)$ is the simplicial complex defined as follows. A $k$-simplex is an isotopy class of $k+1$ disjointly embedded disks such that each disk encloses precisely $d$ marked points in its interior and has no marked points in its boundary. (Here whenever we say ``isotopy'' it is implicit that all the intermediate maps in the isotopy must satisfy these rules as well, e.g., marked points cannot drift out of the interior of the disk and then back in.) The face relation is given by taking subsets.
\end{definition}

\begin{remark}
This complex is related to the curve complex first defined by Harvey in \cite{Hav81}, namely, outside some low-complexity cases, $\diskcpx_d(\surface_{b,m}^g)$ can be viewed as a full subcomplex of the curve complex. Indeed, given a disk enclosing $d$ marked points we can take its boundary curve, which ``usually'' gives a vertex in the curve complex. This fails if the boundary curve bounds a disk, a punctured sphere, or an annulus on the other side, which can only happen if $m\le d+1$. It might also happen that two disks are disjoint up to isotopy but their boundary curves are isotopic, which can happen when $m=2d$.
\end{remark}

Our next goal is to map $\dlmodel_d^m(B_*\wr G)$ to $\diskcpx_d(\surface_{1,m}^0)$ in a certain way. First we describe a useful alternate viewpoint of elementary $d$-ary forests. Let $L_{m-1}$ be the \emph{linear graph} with $m$ vertices, that is the graph with $m$ vertices labeled $1$ through $m$, and $m-1$ edges, one connecting $i$ to $i+1$ for each $1\le i<m$. Call a subgraph of $L_{m-1}$ a \emph{$d$-matching} on $L_{m-1}$ if each of its connected components is a subgraph of length $d-1$. The set of $d$-matchings forms a simplicial complex called the \emph{$d$-matching complex}, denoted by $\CM_d(L_{m-1})$, where a matching forms a $k$-simplex whenever it consists of $k+1$ disjoint paths, and the face relation is given by inclusion. Observe that there is a bijection between the set of elementary $d$-ary forests with $m$ leaves and the set of $d$-matchings on $L_{m-1}$. Under the identification, $d$-carets correspond to paths of length $d-1$. See Figure~\ref{fig:matchings_to_forestsrep} for an example.

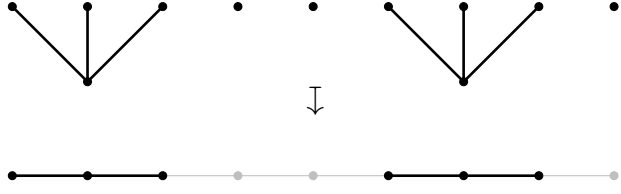
\begin{figure}[htb]
\centering
\begin{tikzpicture}
  \filldraw
   (0,0) circle (1.5pt)
   (1,0) circle (1.5pt)
   (2,0) circle (1.5pt)
   (3,0) circle (1.5pt)
   (4,0) circle (1.5pt)
   (5,0) circle (1.5pt)
   (6,0) circle (1.5pt)
   (7,0) circle (1.5pt)
   (8,0) circle (1.5pt)

   (1,-1) circle (1.5pt)
   (6,-1) circle (1.5pt);

  \draw[line width=1pt]
  (0,0) -- (1, -1)
  (1,0) -- (1, -1)
  (2,0) -- (1, -1)
  (5,0) -- (6, -1)
  (6,0) -- (6, -1)
  (7,0) -- (6, -1);
  
  \node[rotate=-90] at (4,-1.25) {$\mapsto$};

 \begin{scope}[yshift=-2.25cm]
 \draw[lightgray] (0,0) -- (8,0);
 
  \filldraw[lightgray]
   (3,0) circle (1.5pt)
   (4,0) circle (1.5pt)
   (8,0) circle (1.5pt);
  
  \filldraw
   (0,0) circle (1.5pt)
   (1,0) circle (1.5pt)
   (2,0) circle (1.5pt)
   (5,0) circle (1.5pt)
   (6,0) circle (1.5pt)
   (7,0) circle (1.5pt);
  
  \draw[line width=1pt]
   (0,0) -- (1,0)  -- (2,0) 
   (5,0) --  (6,0) -- (7,0);
 \end{scope}
\end{tikzpicture}
\caption{An example of the bijective correspondence between elementary $3$-ary forests with $9$ leaves and simplices of  $\CM_3(L_{8})$.}
\label{fig:matchings_to_forestsrep}
\end{figure}

Let $\surface_m=\surface_{1,m}^0$ be the unit disk with $m$ marked points given by fixing an embedding $L_{m-1} \hookrightarrow \surface_m$ of the linear graph with $m-1$ edges into $\surface_{1,m}^0$ (so the marked points are the images of the vertices). The braid group $B_m$ on $m$ strands is isomorphic to the mapping class group of the disk with $m$ marked points \cite{Bir74}, so we have an action of $B_m$ on $\diskcpx_{d}(\surface_m)$, which will be convenient to take to be a right action.

Define a map $\mu\colon \dlmodel_d^m(B_*\wr G) \rightarrow \diskcpx_{d}(\surface_m)$ as follows. For $[1, \beta (f_1, \dots, f_m), E]_G$ a simplex in $\dlmodel_d^m(B_*\wr G)$, first view the elementary $d$-ary forest $E$ as a $d$-matching in $\CM_d(L_{m-1})$ as above. Then take a disk tubular neighborhood of the $d$-matching, call it $\disk_E$, to obtain a simplex in $\diskcpx_{d}(\surface_m)$. Forget the labels $(f_1, \dots, f_m)$, and then act on $\disk_E$ by applying $\beta^{-1}$ on the right. In this way we obtain a simplicial map
\begin{align*}
\mu \colon  \dlmodel_d^m(B_*\wr G) & \to \diskcpx_{d}(\surface_m) \\
[1, \beta (f_1, \dots, f_m), E]_G & \mapsto (\disk_E)\beta^{-1} \text{ .}
\end{align*}

\begin{lemma}\label{lem:well_def}
The map $\mu$ is well defined.
\end{lemma}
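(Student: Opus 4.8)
The plan is to show that $\mu$ is independent of the choice of representative for a simplex $[1,\beta(f_1,\dots,f_m),E]_G$. There are three sources of non-uniqueness: (a) the equivalence relation $[\,\cdot\,]_G$ quotienting out right multiplication by elements of the form $[1,g',1]$ with $g'\in B_m \wr G$; (b) the identification of the group element $\beta(f_1,\dots,f_m)\in B_m\wr G$ up to the equivalence relation generated by expansions/reductions in $\Thomp_d(B_*\wr G)$ (though for a descending-link simplex the tree part is already a point, so only the right factor $E$ can be expanded/reduced, and we should check this); and (c) the choice of disk tubular neighborhood $\disk_E$ of the $d$-matching, which is unique up to isotopy rel marked points by a standard regular-neighborhood argument, so this causes no issue.

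First I would dispense with (c): any two closed regular neighborhoods of the embedded $d$-matching in $\surface_m$ are ambiently isotopic through homeomorphisms fixing the marked points, so $\disk_E$ is a well-defined simplex of $\diskcpx_d(\surface_m)$; moreover distinct components of the matching give disjoint tubular neighborhoods, each enclosing exactly $d$ marked points with none on the boundary, so $\disk_E$ genuinely lies in $\diskcpx_d(\surface_m)$, and the face relation (removing carets from $E$, i.e.\ deleting components of the matching) is clearly respected. Next I would handle (a): a representative of the same simplex $[1,\beta(f_1,\dots,f_m),E]_G$ obtained by right-multiplying by $[1,g',1]$ (before taking the $G$-class) replaces $\beta(f_1,\dots,f_m)$ by $\beta(f_1,\dots,f_m)\cdot g'$ where $g'=\gamma(h_1,\dots,h_m)\in B_m\wr G$, so the new braid part is $\beta\gamma$ and the new tuple part is some relabeling of the $f_i h_j$'s; crucially the elementary forest $E$ is unchanged. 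Then $\mu$ of the new representative is $(\disk_E)(\beta\gamma)^{-1} = (\disk_E)\gamma^{-1}\beta^{-1}$. So I need to argue that $(\disk_E)\gamma^{-1} = \disk_E$ as a simplex of $\diskcpx_d(\surface_m)$ — but this is exactly the statement that for $[1,\beta(f_1,\dots,f_m),E]_G$ to represent a simplex in the descending link we need $E$ to be an elementary forest and $\gamma$ to be a braid that is "compatible" with $E$ in the sense that it permutes/braids only within the groups of $d$ consecutive strands corresponding to carets of $E$; such a braid is supported inside $\disk_E$ and hence fixes $\disk_E$ setwise. I would make this precise by recalling the cloning-system reduction: $[1, \beta(f_1,\dots,f_m), E]_G$ arises by cloning from a lower vertex, and the constraint is precisely that the part of the group element "inside $E$" can be absorbed; concretely, the braids fixing $\disk_E$ are generated by those whose strands stay within the $d$-blocks, and multiplying $\beta$ on the right by such a braid does not change $(\disk_E)\beta^{-1}$ because $(\disk_E)\gamma^{-1}=\disk_E$ when $\gamma$ is supported in $\disk_E$, so $(\disk_E)\gamma^{-1}\beta^{-1}=(\disk_E)\beta^{-1}$.

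Finally (b): any expansion of the representative attaches a $d$-caret to one of the leaves "inside $E$" (since the left tree is trivial for a descending-link simplex, only the right forest grows), which refines the $d$-matching by extending a component — but for the purposes of the descending link we restrict to $E$ elementary with exactly $k+1$ carets, so the relevant ambiguity is only the $G$-class and the $B_m\wr G$-ambiguity already addressed, together with the harmless relabeling of the tuple which $\mu$ forgets anyway. Assembling these observations shows $\mu$ sends any two representatives of the same simplex to the same simplex of $\diskcpx_d(\surface_m)$, and that it is simplicial since deleting a caret from $E$ deletes the corresponding component of $\disk_E$ and commutes with the right $\beta^{-1}$-action; hence $\mu$ is a well-defined simplicial map.

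The main obstacle I anticipate is (a)/(b): pinning down exactly which elements $\gamma(h_1,\dots,h_m)\in B_m\wr G$ one is allowed to right-multiply by while still representing the \emph{same} descending-link simplex, and verifying that every such $\gamma$ is indeed supported inside $\disk_E$ (equivalently, that it lies in the image of the "block braid" subgroup $B_d\wr_X\cdots \hookrightarrow B_m$ associated to $E$) so that it acts trivially on the simplex $\disk_E$. This requires carefully unwinding the definition of the cloning maps $\clone_k^n$ on $\brW_n=B_n\wr\brAut(\tree_d)$ and the resulting presentation of vertices of $\dlmodel_d^m(B_*\wr G)$; once that bookkeeping is done, the topological input (regular neighborhoods are unique up to isotopy, and a mapping class supported in a subsurface fixes that subsurface setwise) is routine.
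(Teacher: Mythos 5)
There is a genuine gap in your treatment of source (a), which is the heart of the lemma. You take the element being modded out to be $g'=\gamma(h_1,\dots,h_m)\in B_m\wr G$, assert that right multiplication by $[1,g',1]$ leaves $E$ unchanged and turns the braid part into $\beta\gamma$, and then reduce well-definedness to the claim $(\disk_E)\gamma^{-1}=\disk_E$, justified by saying any such $\gamma$ must be supported inside the $d$-blocks of $E$. This misidentifies the equivalence relation. The relation is right multiplication by $[1,\gamma(g_1,\dots,g_q),1]$ with $\gamma(g_1,\dots,g_q)\in B_q\wr G$, where $q$ is the number of \emph{roots} of $E$ (the merged side), and to rewrite the product in the form $[1,\ast,\ast]$ one must commute this factor past $[1,1,E]$, i.e.\ apply the cloning maps. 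Doing so replaces $\gamma(g_1,\dots,g_q)$ by its clone along $E$ --- the cable of $\gamma$ in which each caret-bearing strand becomes $d$ parallel strands, composed with local insertions $\phi(g_i)\in B_d$ inside the blocks --- and, crucially, replaces $E$ by a generally \emph{different} elementary forest $E'$, since the underlying permutation of $\gamma$ carries the carets to new roots. The cabled part is not supported inside $\disk_E$ and does not fix $\disk_E$: for $d=2$, $m=3$, $E$ the forest with one caret on its first root, and $\gamma$ the generator of $B_2$, the clone crosses the block $\{1,2\}$ over strand $3$, sending the disk around marked points $1,2$ to the disk around $2,3$, while $E'$ now has its caret on the second root. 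So the identity you actually need is $(\disk_E)\gamma'=\disk_{E'}$, with $E'$ the forest of the new representative; the change of forest and the motion of the disks then cancel in $(\disk_{E'})(\beta\gamma')^{-1}=(\disk_E)\beta^{-1}$. As written, your key step $(\disk_E)\gamma^{-1}=\disk_E$ is false, and the characterization of the allowed $\gamma$ as ``block braids'' is wrong.

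The parts of your argument that do survive are: the observation that the local factors $\phi(g_i)$ are supported in the interiors of the disks and hence do not change the isotopy class of the disk system (this is exactly how the paper disposes of the $G$-part of the absorbed element), and your points (b) and (c). Indeed (b) is vacuous --- a groupoid element has a unique representative whose left forest is trivial with $m$ roots --- and (c) is the routine uniqueness of regular neighborhoods. But the core computation for (a) needs to be redone along the lines above.
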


\begin{proof}
Say $E$ has $q$ roots. Let $\gamma(g_1,\dots,g_q)\in B_q\wr G$, so
\[
[1, \beta (f_1, \dots, f_m), E]_G=[1,\beta(f_1, \dots, f_m), E][1,\gamma(g_1,\dots,g_q),1]_G \text{.}
\]
Let $\gamma'(g_1',\dots,g_m')\in B_q\wr G$ and $E'$ be such that
\[
[1,1,E][1,\gamma(g_1,\dots,g_q),1] = [1,\gamma'(g_1',\dots,g_m'),1][1,1,E'] \text{.}
\]
Then
\[
[1,\beta(f_1, \dots, f_m), E][1,\gamma(g_1,\dots,g_q),1]_G = [1,\beta\gamma'(f_1', \dots, f_m')(g_1',\dots,g_m'),1][1,1,E']_G\text{,}
\]
for some $f_i'$, so $\mu$ sends this to $(\disk_{E'})(\beta\gamma')^{-1}$. We need to show that this equals $(\disk_E)\beta^{-1}$, or equivalently that $(\disk_{E'})(\gamma')^{-1}=\disk_E$, i.e., $(\disk_E)\gamma'=\disk_{E'}$. Note that $\gamma'$ is obtained by taking $\gamma$, turning each strand corresponding to a root of $E$ with a $d$-caret on it into $d$ parallel strands (call this intermediate braid $\gamma''$), and then on each such collection of $d$ parallel strands applying some $\phi(g_i)\in B_d$ to it (where $\phi$ is as in Subsection~\ref{ssec:braid_aut}). When acting on $\disk_E$ though, since each of these local copies of the $\phi(g_i)$ are supported in the interior of one of the disks in the disk system, they do not change the (equivalence class of the) disk system. It is clear that $(\disk_E)\gamma''=\disk_{E'}$, so we conclude that also $(\disk_E)\gamma'=\disk_{E'}$.
\end{proof}

One can visualize $\mu$ as taking $[1,\beta(f_1,\dots,f_n),E]_G$, viewing it as the braid $\beta$ with strands labeled by the $f_i$ at the bottom and with $E$ serving to ``merge'' strands together, and then forgetting the labels $f_i$, considering the merges as $d$-matchings, enlarging them to disks, ``combing straight'' the braid, and seeing where the disks are taken. See Figure~\ref{fig:combing} for an example. Note that the resulting simplex $(\disk_E)\beta^{-1}$ of $\diskcpx_{d}(\surface_m)$ has the same dimension as the simplex $[1, \beta (f_1, \dots, f_m), E]_G$ of $\dlmodel_d^m(B_*\wr G)$, so in particular $\mu$ is simplexwise injective. Also note that if $d>2$ then the $\phi(g_i)$ from the proof of Lemma~\ref{lem:well_def} could change the $d$-matchings, but cannot change their corresponding disks, hence why we use disk complexes instead of, say, arc complexes.

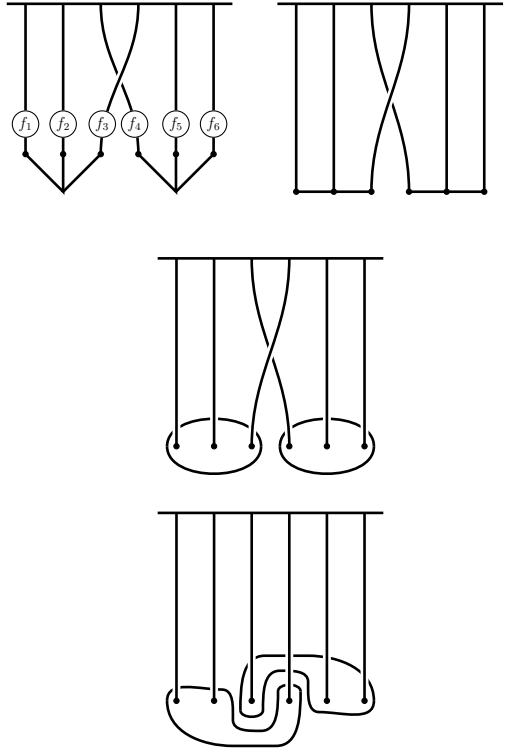
\begin{figure}[htb]
\centering
\begin{tikzpicture}
    \clip(-.30,0.4) rectangle (2.75,-2.95);

    \draw[line width=1pt] (0,0) to (0,-2);
    \draw[line width=1pt] (.5,0) to (.5,-2);
    
     \draw[line width=3pt, white] (1,0) to [out=270, in=90] (1.5,-2);
    \draw[line width=1pt] (1,0) to [out=270, in=90] (1.5,-2);
    
    \draw[line width=1pt] (2,0) to (2,-2);
    \draw[line width=1pt] (2.5,0) to (2.5,-2);
 
   \draw[line width= 1pt] (0,-2)-- (.5, -2.5)--(.5, -2);
   \draw[line width=1pt] (.5, -2.5) --(1,-2);
 
    \draw[line width= 1pt] (1.5,-2)-- (2, -2.5)--(2, -2);
   \draw[line width=1pt] (2, -2.5) --(2.5,-2);

        \draw[line width=3pt, white] (1.5,0) to [out=270, in=90] (1,-2);
    \draw[line width=1pt] (1.5,0) to [out=270, in=90] (1,-2);

         \draw[white,fill=white] (0,-1.6) circle (5 pt);
\draw  (0,-1.6) circle (5 pt) node[text=black, scale=.5] {$f_1$}; 

      \draw[white,fill=white] (0.5,-1.6) circle (5 pt);
\draw  (0.5,-1.6) circle (5 pt) node[text=black, scale=.5] {$f_2$}; 

       \draw[white,fill=white] (1.02,-1.6) circle (5 pt);
\draw  (1.02,-1.6) circle (5 pt) node[text=black, scale=.5] {$f_3$};  
   
        \draw[white,fill=white] (1.45,-1.6) circle (5 pt);
\draw  (1.45,-1.6) circle (5 pt) node[text=black, scale=.5] {$f_4$};  

       \draw[white,fill=white] (2,-1.6) circle (5 pt);
\draw  (2,-1.6) circle (5 pt) node[text=black, scale=.5] {$f_5$};  

       \draw[white,fill=white] (2.5,-1.6) circle (5 pt);
\draw  (2.5,-1.6) circle (5 pt) node[text=black, scale=.5] {$f_6$};

   \filldraw[]
   (0, -2) circle (1pt)
   (.5, -2) circle (1pt)
   (1, -2) circle (1pt)
   (1.5, -2) circle (1pt)
   (2, -2) circle (1pt)
   (2.5, -2) circle (1pt);

\draw[line width=1pt] (-0.25,0) -- (2.75,0);
    
\end{tikzpicture}
\quad
\begin{tikzpicture}
    \clip(-.30,0.4) rectangle (2.75,-2.95);

        \draw[line width=1pt] (0,0) to (0,-2.5);
    \draw[line width=1pt] (.5,0) to (.5,-2.5);

     \draw[line width=3pt, white] (1,0) to [out=270, in=90] (1.5,-2.5);
    \draw[line width=1pt] (1,0) to [out=270, in=90] (1.5,-2.5);
    
     \draw[line width=3pt, white] (1.5,0) to [out=270, in=90] (1,-2.5);
    \draw[line width=1pt] (1.5,0) to [out=270, in=90] (1,-2.5);
    
    \draw[line width=1pt] (2,0) to (2,-2.5);
    \draw[line width=1pt] (2.5,0) to (2.5,-2.5);
 
   \filldraw[]
   (0, -2.5) circle (1pt)
   (.5, -2.5) circle (1pt)
   (1, -2.5) circle (1pt)
   (1.5, -2.5) circle (1pt)
   (2, -2.5) circle (1pt)
   (2.5, -2.5) circle (1pt);
   
   \draw[line width=1pt, ] (0,-2.5) -- (1,-2.5);
   \draw[line width=1pt, ] (1.5,-2.5) -- (2.5,-2.5);

\draw[line width=1pt] (-0.25,0) -- (2.75,0);
\end{tikzpicture}

\quad
\begin{tikzpicture}
    \clip(-.30,0.4) rectangle (2.75,-2.95);
       \draw[line width=1pt] (-.125,-2.5) to [out=270, in=270] (1.125,-2.5);
       \draw[line width=1pt] (1.125,-2.5) to [out=90, in=90] (-.125,-2.5);

       \draw[line width=1pt] (1.375,-2.5) to [out=270, in=270] (2.625,-2.5);
       \draw[line width=1pt] (2.625,-2.5) to [out=90, in=90] (1.375,-2.5);
    
        \draw[line width=3pt, white] (0,0) to (0,-2.5);
        \draw[line width=1pt] (0,0) to (0,-2.5);
    \draw[line width=3pt, white] (.5,0) to (.5,-2.5);
    \draw[line width=1pt] (.5,0) to (.5,-2.5);

     \draw[line width=3pt, white] (1,0) to [out=270, in=90] (1.5,-2.5);
    \draw[line width=1pt] (1,0) to [out=270, in=90] (1.5,-2.5);
    
     \draw[line width=3pt, white] (1.5,0) to [out=270, in=90] (1,-2.5);
    \draw[line width=1pt] (1.5,0) to [out=270, in=90] (1,-2.5);
        
        \draw[line width=3pt, white] (2,0) to (2,-2.5);
    \draw[line width=1pt] (2,0) to (2,-2.5);
    
        \draw[line width=3pt, white] (2.5,0) to (2.5,-2.5);
    \draw[line width=1pt] (2.5,0) to (2.5,-2.5);
 
   \filldraw[]
   (0, -2.5) circle (1pt)
   (.5, -2.5) circle (1pt)
   (1, -2.5) circle (1pt)
   (1.5, -2.5) circle (1pt)
   (2, -2.5) circle (1pt)
   (2.5, -2.5) circle (1pt);
   
   \draw[line width=1pt] (-0.25,0) -- (2.75,0);

\end{tikzpicture}

\quad
\begin{tikzpicture}
 \clip(-.30,0.4) rectangle (2.75,-3.95);
 
\draw[line width=1pt] (-.125,-2.5) to [out=90, in=180] (.6,-2.35) to [out=0, in=90] (.75,-2.75)to [out=270, in=180] (1.125,-2.9) to [out=0, in=270] (1.35, -2.375) to [out=90, in=90] (1.65,-2.375);
\draw[line width=1pt] (-.125,-2.5) to [out=270, in=180] (.75,-3.1)to [out=0, in=180] (1.325,-3.1) to [out=0, in=270] (1.65,-2.375);

\begin{scope}[xscale=-1, xshift=-2.5cm, yscale=-1, yshift=5cm]
 \draw[line width=1pt] (-.125,-2.5) to [out=90, in=180] (.6,-2.35) to [out=0, in=90] (.75,-2.75)to [out=270, in=180] (1.125,-2.9) to [out=0, in=270] (1.35, -2.375) to [out=90, in=90] (1.65,-2.375);
\draw[line width=1pt] (-.125,-2.5) to [out=270, in=180] (.75,-3.1)to [out=0, in=180] (1.325,-3.1) to [out=0, in=270] (1.65,-2.375);
\end{scope}

    \draw[line width=3pt, white] (0,0) to (0,-2.5);
    \draw[line width=1pt] (0,0) to (0,-2.5);
    \draw[line width=3pt, white] (.5,0) to (.5,-2.5);
    \draw[line width=1pt] (.5,0) to (.5,-2.5);
    \draw[line width=3pt, white] (1,0) to (1,-2.5);
    \draw[line width=1pt] (1,0) to (1,-2.5);
    \draw[line width=3pt,white] (1.5,0) to (1.5,-2.5);
    \draw[line width=1pt] (1.5,0) to (1.5,-2.5);
    \draw[line width=3pt, white] (2,0) to (2,-2.5);
    \draw[line width=1pt] (2,0) to (2,-2.5);
    \draw[line width=3pt, white] (2.5,0) to (2.5,-2.5);
    \draw[line width=1pt] (2.5,0) to (2.5,-2.5);

  \filldraw[]
   (0, -2.5) circle (1pt)
   (.5, -2.5) circle (1pt)
   (1, -2.5) circle (1pt)
   (1.5, -2.5) circle (1pt)
   (2, -2.5) circle (1pt)
   (2.5, -2.5) circle (1pt);

\draw[line width=1pt] (-0.25,0) -- (2.75,0);

\end{tikzpicture}
\caption{An illustration of $\mu\colon \dlmodel_3^6(B_*\wr G) \rightarrow \diskcpx_3(\surface_6)$. The successive pictures show the process of flattening to a $3$-matching, expanding the matching to disks, forgetting the labels, and ``combing straight'' the braid.}
\label{fig:combing}
\end{figure}

In the following proofs, we will need a useful tool introduced by Hatcher and Wahl called the complete join \cite{HW10}.

\begin{definition}[Complete join (complex)]
A surjective simplicial map $\nu: Y\to X$ is called a \emph{complete join} if it satisfies the following properties:
\begin{enumerate} [label=(\arabic*)]
    \item $\nu$ is simplexwise injective.
    \item For each $k$-simplex $\sigma$ of $X$, say with vertices $v_0,\dots,v_k$, the fiber $\nu^{-1}(\sigma)$ equals the join $\nu^{-1}(v_0)\ast \cdots\ast \nu^{-1}(v_k)$.
 \end{enumerate}
In this case we call $Y$ a \emph{complete join complex over $X$}.
\end{definition}

\begin{definition}[Weakly Cohen-Macaulay (wCM)]
A simplicial complex  $X$ is called \emph{weakly Cohen-Macaulay (wCM) of dimension $n$} if $X$ is $(n-1)$-connected and the link of each $k$-simplex of $X$ is $(n-k-2)$-connected. (Note that $X$ need not necessarily be $n$-dimensional.)
\end{definition}

The main result regarding complete joins that we will use is the following.

\begin{cit}\cite[Proposition 3.5]{HW10}\label{cit:join}
If $Y$ is a complete join complex over a complex $X$ that is wCM of dimension $n$, then $Y$ is also wCM of dimension $n$.
\end{cit}

Now we would like to use the map $\mu\colon \dlmodel_d^m(B_*\wr G) \to \diskcpx_{d}(\surface_{1,m}^0)$ to prove higher connectivity of $\dlmodel_d^m(B_*\wr G)$, and hence of the descending links in the Stein--Farley complex. First we need the following, which is proved in \cite{skipperwu}:

\begin{cit}\cite{skipperwu}\label{cit:disk_conn}
For any $d\ge 2$, the complex $\diskcpx_{d}(\surface_{b,m}^g)$ is $(\left\lfloor\frac{m+1}{2d-1}\right\rfloor-2)$-connected.
\end{cit}

\begin{corollary}\label{cor:disk_conn}
For any $d\ge 2$, the complex $\diskcpx_{d}(\surface_{b,m}^g)$ is wCM of dimension $\lfloor\frac{m+1}{2d-1}\rfloor-1$.
\end{corollary}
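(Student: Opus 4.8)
The plan is to read off both defining conditions of weak Cohen--Macaulayness directly from Citation~\ref{cit:disk_conn}. Write $n\defeq\lfloor\frac{m+1}{2d-1}\rfloor-1$. The first condition, that $\diskcpx_{d}(\surface_{b,m}^g)$ be $(n-1)$-connected, is literally the statement of Citation~\ref{cit:disk_conn}, so there is nothing to prove there. The whole argument therefore reduces to controlling the links of simplices.

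The key step is to identify the link of a $k$-simplex with a disk complex of a subsurface. A $k$-simplex $\sigma$ of $\diskcpx_{d}(\surface_{b,m}^g)$ is a collection of $k+1$ disjointly embedded disks $D_0,\dots,D_k$, each enclosing exactly $d$ marked points and none on its boundary. Removing the open disks $\mathrm{int}(D_0),\dots,\mathrm{int}(D_k)$ from $\surface_{b,m}^g$ yields a surface of genus $g$ with $b+k+1$ boundary components and $m-(k+1)d$ marked points, namely $\surface_{b+k+1,\,m-(k+1)d}^g$. A standard isotopy-extension argument then shows that $\lk(\sigma)$ is isomorphic to $\diskcpx_{d}(\surface_{b+k+1,\,m-(k+1)d}^g)$: every disk enclosing $d$ marked points and disjoint from all $D_i$ can be isotoped into the complementary subsurface, and such isotopies can also be taken in the complement. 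By Citation~\ref{cit:disk_conn}, this link is $\bigl(\lfloor\frac{m-(k+1)d+1}{2d-1}\rfloor-2\bigr)$-connected, and the required bound for wCM of dimension $n$ is that $\lk(\sigma)$ be $(n-k-2)$-connected.

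It then remains to verify the floor inequality $\lfloor\frac{m-(k+1)d+1}{2d-1}\rfloor-2\ge n-k-2$, equivalently $\lfloor\frac{m-(k+1)d+1}{2d-1}\rfloor\ge\lfloor\frac{m+1}{2d-1}\rfloor-(k+1)$. Since $d\le 2d-1$, we have $(k+1)d\le(k+1)(2d-1)$, hence $\frac{m-(k+1)d+1}{2d-1}\ge\frac{m+1}{2d-1}-(k+1)$; applying monotonicity of $\lfloor\cdot\rfloor$ and the fact that subtracting the integer $k+1$ commutes with $\lfloor\cdot\rfloor$ gives the claim. (In the degenerate case $m=(k+1)d$ the link is empty, but then $\lfloor\frac{m+1}{2d-1}\rfloor-(k+1)\le 0$ forces the required connectivity to be $\le -2$, so the condition holds vacuously; this is already subsumed by the same inequality.) This establishes the link condition for all $k$, and together with the $(n-1)$-connectedness from Citation~\ref{cit:disk_conn} we conclude that $\diskcpx_{d}(\surface_{b,m}^g)$ is wCM of dimension $n$. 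I do not anticipate a genuine obstacle here: the only step carrying topological content is the identification of links with disk complexes of subsurfaces, and the rest is bookkeeping with floor functions.
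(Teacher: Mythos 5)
Your proposal is correct and follows essentially the same route as the paper: cite the connectivity bound for the whole complex, identify the link of a $k$-simplex with $\diskcpx_{d}(\surface_{b+k+1,\,m-(k+1)d}^g)$ by cutting out the disks, and then verify the floor inequality via $\frac{-d(k+1)}{2d-1}\ge -(k+1)$. The only differences are cosmetic (you spell out the isotopy-extension justification and the empty-link degenerate case, which the paper leaves implicit).
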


\begin{proof}
Since $\diskcpx_{d}(\surface_{b,m}^g)$ is $(\left\lfloor\frac{m+1}{2d-1}\right\rfloor-2)$-connected by Citation~\ref{cit:disk_conn}, we just need to prove that the link of any $k$-simplex is $(\left\lfloor\frac{m+1}{2d-1}\right\rfloor-k-3)$-connected. Let $\sigma=\{D_0,\dots,D_k\}$ be a $k$-simplex in $\diskcpx_{d}(\surface_{b,m}^g)$. Let $S'$ be the surface obtained from $\surface_{b,m}^g$ by removing the interiors of the $D_i$, including the $d(k+1)$ many marked points in their interiors, leaving $k+1$ new boundary components. Since the $D_i$ are pairwise disjoint, $S'$ is connected, and hence is of the form $\surface_{b+(k+1),m-d(k+1)}^g$. The link of $\sigma$ is therefore isomorphic to $\diskcpx_{d}(\surface_{b+(k+1),m-d(k+1)}^g)$, which is $(\left\lfloor\frac{m-d(k+1)+1}{2d-1}\right\rfloor-2)$-connected by Citation~\ref{cit:disk_conn}. Since $\frac{-d(k+1)}{2d-1}\ge-(k+1)$, this is also $(\left\lfloor\frac{m+1}{2d-1}\right\rfloor-k-3)$-connected, so we are done.
\end{proof}

\begin{proposition}\label{prop:dlk_conn}
For any $d\ge 2$ and any $m\in\N$, the complex $\dlmodel_d^m(B_*\wr G)$ is wCM of dimension $\lfloor\frac{m+1}{2d-1}\rfloor-1$, so in particular is $(\lfloor\frac{m+1}{2d-1}\rfloor-2)$-connected.
\end{proposition}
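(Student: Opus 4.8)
\emph{Proof plan.} The plan is to use the simplicial map $\mu\colon \dlmodel_d^m(B_*\wr G)\to \diskcpx_d(\surface_m)$ constructed above, and show that it exhibits $\dlmodel_d^m(B_*\wr G)$ as a complete join complex over $\diskcpx_d(\surface_m)$. Since $\diskcpx_d(\surface_m)=\diskcpx_d(\surface_{1,m}^0)$ is wCM of dimension $\lfloor\frac{m+1}{2d-1}\rfloor-1$ by Corollary~\ref{cor:disk_conn}, Citation~\ref{cit:join} then gives immediately that $\dlmodel_d^m(B_*\wr G)$ is wCM of the same dimension, and the ``in particular'' clause is just the definition of wCM of dimension $n$ applied with $n=\lfloor\frac{m+1}{2d-1}\rfloor-1$.

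Recall that to be a complete join, $\mu$ must be surjective and simplexwise injective, and must satisfy $\mu^{-1}(\sigma)=\mu^{-1}(v_0)\ast\cdots\ast\mu^{-1}(v_k)$ for every $k$-simplex $\sigma=\{v_0,\dots,v_k\}$ of $\diskcpx_d(\surface_m)$. Simplexwise injectivity was already observed right after the definition of $\mu$. For surjectivity I would invoke the change-of-coordinates principle for the braid group $B_m$, viewed as the mapping class group of $\surface_m$: any collection of $k+1$ pairwise disjoint embedded disks, each enclosing exactly $d$ marked points and with no marked point on its boundary, is carried by some element of $B_m$ to the ``standard'' disk system $\disk_E$ associated (as in the definition of $\mu$) to a standard $d$-matching $E\in\CM_d(L_{m-1})$ with $k+1$ carets; hence every simplex of $\diskcpx_d(\surface_m)$ has the form $(\disk_E)\beta^{-1}$ and lies in the image of $\mu$. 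The inclusion $\mu^{-1}(\sigma)\subseteq\mu^{-1}(v_0)\ast\cdots\ast\mu^{-1}(v_k)$ is automatic from simplexwise injectivity together with the fact that $\mu$ is simplicial. The substantive point is the reverse inclusion: given disks $D_0,\dots,D_k$ spanning a simplex of $\diskcpx_d(\surface_m)$ (so the $D_i$ are pairwise disjoint) and \emph{arbitrary} vertices $w_i\in\mu^{-1}(D_i)$, one must show that $\{w_0,\dots,w_k\}$ spans a simplex of $\dlmodel_d^m(B_*\wr G)$. Here I would follow the template of \cite{skipperwu}: using the change-of-coordinates principle choose a single $\beta\in B_m$ and pairwise disjoint single carets $F^{(0)},\dots,F^{(k)}$ with $(\disk_{F^{(i)}})\beta^{-1}=D_i$ for all $i$, and then rewrite each $w_i$, via the equivalence relation defining $\dlmodel_d^m(B_*\wr G)$ (right multiplication by triples $[1,\gamma(\vec g),1]_G$, which absorbs braiding of the complementary strand configuration and the $\brAut(\tree_d)$-decorations, and whose interaction with the $d$-ary cloning maps $\clone_k^n$ and the braided wreath recursion absorbs braiding internal to the carets), so that all the $w_i$ are simultaneously realized as the single-caret faces of one $k$-simplex $[1,g,F^{(0)}\cup\cdots\cup F^{(k)}]_G$. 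The only place braided self-similarity enters, as opposed to braided self-identicality (the case of \cite{skipperwu}), is that the restriction of the $\clone_k^n$ to $B_n\wr G$ lands in $B_{n+d-1}\wr G$ and that the wreath-recursion bookkeeping of Lemma~\ref{lem:well_def} still goes through; no stronger hypothesis on $G$ is needed.

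With $\mu$ established to be a complete join over $\diskcpx_d(\surface_m)$, Citation~\ref{cit:join} and Corollary~\ref{cor:disk_conn} finish the proof. The part requiring the most care, and the main obstacle, is the fiber-join condition of the previous paragraph: one must track precisely how the equivalence relation on $\dlmodel_d^m(B_*\wr G)$, together with the braided wreath recursion, lets one move all of the $w_i$ onto a common representative, since a priori each $w_i$ carries ``internal'' braiding and decoration data over and above the underlying disk $D_i$. By contrast, surjectivity and simplexwise injectivity are routine, and all of the higher-connectivity input is already packaged in Corollary~\ref{cor:disk_conn} (hence ultimately in Citation~\ref{cit:disk_conn}).
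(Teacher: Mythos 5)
Your proposal is correct and follows essentially the same route as the paper: reduce via Citation~\ref{cit:join} and Corollary~\ref{cor:disk_conn} to showing that $\mu$ is a complete join, with surjectivity and simplexwise injectivity being routine and the fiber-join condition being the substantive point. The only organizational difference is that the paper verifies the fiber-join condition by induction on $k$ --- normalizing the $(k-1)$-simplex spanned by $v_1,\dots,v_k$ to the form $[1,1,E]_G$ using the left action of $B_m\wr G$, and then using disjointness of $\mu(v_0)$ from $\disk_E$ to absorb the remaining braid and decorations into the equivalence relation --- rather than normalizing all $k+1$ vertices simultaneously as you sketch, but the underlying mechanism is the same.
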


\begin{proof}
Since $\diskcpx_{d}(\surface_{1,m}^0)$ is wCM of dimension $\lfloor\frac{m+1}{2d-1}\rfloor-1$ by Corollary~\ref{cor:disk_conn}, it suffices by Citation~\ref{cit:join} to prove that $\mu\colon \dlmodel_d^m(B_*\wr G) \to \diskcpx_{d}(\surface_{1,m}^0)$ is a complete join. We know that $\mu$ is surjective and simplexwise injective. Now we need to show that the fiber under $\mu$ of any simplex in $\diskcpx_{d}(\surface_{1,m}^0)$ is the join of the fibers of its vertices. Clearly the fiber of the simplex lies in the join of the vertex fibers, so we need to prove the reverse inclusion. This means that we need to prove that any collection of $k+1$ vertices in $\dlmodel_d^m(B_*\wr G)$ whose images under $\mu$ span a $k$-simplex in $\diskcpx_{d}(\surface_{1,m}^0)$ themselves span a $k$-simplex in $\dlmodel_d^m(B_*\wr G)$. We induct on $k$. The base case $k=0$ is trivial, so assume $k\ge 1$. Let $v_0,\dots,v_k$ be $k+1$ vertices in $\dlmodel_d^m(B_*\wr G)$ whose images under $\mu$ span a $k$-simplex in $\diskcpx_{d}(\surface_{1,m}^0)$. By induction, $v_1,\dots,v_k$ span a $(k-1)$-simplex, call it $\sigma$. Now $\mu(\sigma)$ is a collection of $k$ pairwise disjoint disks, and $\mu(v_0)$ is a disk disjoint from all the disks in $\mu(\sigma)$. Up to the left action of $B_m\wr G$ on $\dlmodel_d^m(B_*\wr G)$, we can assume without loss of generality that $\sigma$ is of the form $[1,1,E]_G$ for some $E$. Now say $v_0=[1,\beta(f_1,\dots,f_m),e]_G$ for some $\beta(f_1,\dots,f_m)\in B_m\wr G$ and $e$ some elementary $d$-ary forest with exactly one $d$-caret. Since $\mu(\sigma)=\disk_E$ is disjoint from $\mu(v_0)=(\disk_e)\beta^{-1}$, we know that $[1,\beta(f_1,\dots,f_m),E]_G=[1,1,E]_G$. Setting $E'$ equal to the elementary $d$-ary forest obtained as the union of $E$ and $e$, we therefore have that $[1,\beta(f_1,\dots,f_m),E']_G$ is a $k$-simplex containing both $\sigma$ and $v_0$.
\end{proof}

Now we can prove Theorem~\ref{thrm:braided_nekr_inherit_finprops}.

\begin{proof}[Proof of Theorem~\ref{thrm:braided_nekr_inherit_finprops}]
We have a braided self-similar group $G\le \brAut(\tree_d)$ of type $\F_n$, and need to prove that $\brV_d(G)=\Thomp_d(B_*\wr G)$ is of type $\F_n$. Note that $G$ being of type $\F_n$ implies each $B_m\wr G$ is of type $\F_n$, since a direct product of finitely many type $\F_n$ groups is of type $\F_n$, and an extension of a type $\F_n$ group (like $B_d$) by a type $\F_n$ group is type $\F_n$; see, e.g., \cite[Theorem~7.2.21]{geoghegan08}. Hence by Proposition~\ref{prop:fin_props_machine}, we just need to prove that $\dlmodel_d^m(B_*\wr G)$ is $(n-1)$-connected for all but finitely many $m$. This follows from Proposition~\ref{prop:dlk_conn}, since $\lfloor\frac{m+1}{2d-1}\rfloor-2$ goes to $\infty$ with $m$.
\end{proof}

\section{Finiteness properties of the braided R\"over group}\label{sec:braided_roever_fin_props}

This section is entirely about the braided R\"over group $\brV_2(\brGrig)$. For brevity let us write $\brR$ for the group, and we will also introduce other concise notation as we go. The main result of this section is the following:

\begin{theorem}\label{thrm:braided_roever_F_infty}
The braided R\"over group $\brR$ is of type $\F_\infty$.
\end{theorem}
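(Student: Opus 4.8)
The plan is to follow the roadmap of Belk and Matucci \cite{belk16} for the original R\"over group, adapted to the braided setting and to the Stein--Farley framework of Subsection~\ref{ssec:stein_farley}. Theorem~\ref{thrm:braided_nekr_inherit_finprops} cannot be invoked directly, since $\brGrig$ is not even finitely presented (Proposition~\ref{prop:brGrig_not_fp}), so the cell stabilizers $B_m\wr\brGrig$ of the ``naive'' Stein--Farley complex $\Stein_2(B_*\wr\brGrig)$ are nowhere near type $\F_\infty$. The way out is that $\brGrig$ is \emph{contracting}: by Lemma~\ref{lem:br_grig_contract} the reduced length of a leaf-label strictly decreases under the braided wreath recursion until it is at most $1$, and an element of reduced length at most $1$ lies in $\langle a\rangle$ or in $\langle b,c\rangle$. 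One checks (using the formula in the proof of Lemma~\ref{lem:br_grig_contract}) that the set $\mathcal N\defeq\langle a\rangle\cup\langle b,c\rangle$ is closed under passing to states, so after finitely many expansions every element of $\brR$ is represented by a forest-pair all of whose leaf-labels lie in $\mathcal N$.

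Building on this, the first step is to construct an ``$(H_*)$-Stein--Farley complex'' $\Stein$ for $\brR$ in the spirit of the $H$-Stein--Farley complexes of \cite{skipper21} and of the colored-forest complex of \cite{belk16}: roughly, the cells of $\Stein$ correspond to forest-pairs carrying $\mathcal N$-valued leaf-colorings (updated under expansion via the braided wreath recursion) together with braid data, with $\brR$ acting by left multiplication and with the ``number of feet'' function $h$ serving as a Morse function. Contractibility of $\Stein$, cocompactness of the sublevel complexes $\Stein^{h\le m}$, and the isomorphism type of the descending links are established exactly as in Subsection~\ref{ssec:stein_farley}, using upward-local finiteness. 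The essential gain is that the cell stabilizers in $\Stein$ are now (finite-index subgroups of) finite direct products of braid groups $B_k$, of copies of $\langle a\rangle\cong\Z$, and of copies of $\langle b,c,d\rangle\cong\Z^2$ (Lemma~\ref{lem:Z2_in_brGrig}); since braid groups, $\Z$, and $\Z^2$ are of type $\F_\infty$, and type $\F_\infty$ is closed under finite direct products and under passage to finite-index subgroups, every cell stabilizer of $\Stein$ is of type $\F_\infty$.

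By Citation~\ref{cit:brown_morse} (Brown's Criterion together with discrete Morse theory) it then suffices to show that, for every $n$, the descending link of a vertex of $\Stein$ of sufficiently large $h$-value is $(n-1)$-connected. As in Subsection~\ref{ssec:braid_dlks}, such a descending link carries a natural simplexwise-injective map to a disk complex on the surface $\surface_{1,m}^0$; but now the asymmetry of the defining recursions $b=(a,c)$, $c=(a^{-1},d)$, $d=(1,b)$ --- one child of each of $b,c,d$ being trivial --- forbids subdividing certain regions, and this is precisely what is encoded by allowing a disk to carry a single marked point \emph{on its boundary}. The correct target is therefore the $(2,5/2)$-disk complex of Definition~\ref{def:252}, and the remaining work is: (i) prove that the $(2,5/2)$-disk complex is highly connected --- more precisely that it is weakly Cohen--Macaulay of dimension tending to $\infty$ with $m$ --- by a Hatcher--Wahl-style argument analyzing the links of its (two types of) simplices; (ii) show that the map from the descending link to the $(2,5/2)$-disk complex is a complete join, so that Citation~\ref{cit:join} transports the connectivity of the target to the descending link; (iii) assemble everything through Citation~\ref{cit:brown_morse}.

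The main obstacle I expect is step (i). The $(2,5/2)$-disk complex is genuinely more delicate than the $d$-marked-point disk complex of Citation~\ref{cit:disk_conn}: it mixes vertices of two kinds (disks that do and disks that do not enclose a boundary marked point) and allows \emph{nesting} as well as disjointness, so the standard link induction must be reorganized and the two families of connectivity estimates fed into one another. A secondary, essentially bookkeeping obstacle is the precise setup of the $(H_*)$-Stein--Farley complex and the verification that it is contractible with the claimed cell stabilizers, but this is modeled closely on \cite{belk16,skipper21} and Subsection~\ref{ssec:stein_farley}.
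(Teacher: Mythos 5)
Your overall strategy matches the paper's: replace the naive Stein--Farley complex by one whose stabilizers are built from $B_m$, $\Z$, and $\Z^2$ (exploiting the contracting property of Lemma~\ref{lem:br_grig_contract}), map descending links to the $(2,5/2)$-disk complex of Definition~\ref{def:252} via a complete join, prove that complex is wCM of dimension tending to $\infty$, and conclude by Citation~\ref{cit:brown_morse}. You also correctly single out the connectivity of the $(2,5/2)$-disk complex as the technical heart. However, there is one genuine gap in the middle step: your claim that contractibility, cocompactness of the sublevel complexes, and the stabilizer computation go through ``exactly as in Subsection~\ref{ssec:stein_farley}, using upward-local finiteness.'' Upward-local finiteness is precisely the property that \emph{fails} for the modified complex, and this is where the braided case genuinely departs from both Subsection~\ref{ssec:stein_farley} and from Belk--Matucci. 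Your proposed nucleus $\langle a\rangle\cup\langle b,c\rangle$ is infinite (unlike the finite nucleus $\{1,\overline{a},\overline{b},\overline{c},\overline{d}\}$ of $\Grig$), so above a given vertex there are infinitely many one-caret expansions --- in the paper's normalization these are the weakly elementary splittings $\wedge(a^k,1)$ for all $k\in\Z$. Consequently a cell stabilizer need not have finite index in a vertex stabilizer, and finiteness of cell orbits in a sublevel complex is not automatic from transitivity on vertices.

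Closing this gap requires the identities
\[
[1,b^{-k},1][\wedge,(a^k,1),1]_Z=[\wedge,(1,1),1]_Z \qquad\text{and}\qquad [1,b^{-k},1][\wedge^2,1,1]_Z=[\wedge^2,1,1]_Z\text{,}
\]
which show that the vertex stabilizer $B_m\wr Z$ (for $Z=\langle b,c,d\rangle\cong\Z^2$) acts with finitely many orbits on the cells above a vertex, even though there are infinitely many of them; this is the content of the paper's Lemmas~\ref{lem:br_ro_stabs} and~\ref{lem:br_ro_cocpt}. The same phenomenon forces extra care in the contractibility argument: to run the conical-contractibility step one must show that a vertex admitting two distinct expansions $\wedge(a^k,1)_Z$ and $\wedge(a^\ell,1)_Z$ with $k\ne\ell$ below a given upper bound $y$ in fact has $\wedge^2_Z\le y$ (Proposition~\ref{prop:roever_Stein_cible}); otherwise there is no unique largest weakly elementary element in the relevant interval. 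None of this is fatal to your plan --- the relation $b=(a,c)$ supplies exactly the needed flexibility --- but as written your proposal asserts a property that is false and leans on it for three of the main verifications, so these steps must be reworked rather than cited.
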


Note that $\brGrig$ is not even finitely presented (Lemma~\ref{prop:brGrig_not_fp}) much less $\F_\infty$, so Theorem~\ref{thrm:braided_nekr_inherit_finprops} does not apply. In particular $\brR$ shows that the converse of Theorem~\ref{thrm:braided_nekr_inherit_finprops} is not true in general, in contrast to the braided self-identical case done in \cite{skipperwu}. Our roadmap is Belk and Matucci's proof that the R\"over group $V_2(\Grig)$ is of type $\F_\infty$ \cite{belk16}. We will also make use of the constructions from Section~\ref{sec:fin_props}.

To begin, we return to the groupoid $\Groupoid\defeq \Groupoid_2(B_*\wr \brGrig)$ of elements of the form $[F_-,g,F_+]$, for $F_\pm$ forests with $m$ leaves and $g\in B_m\wr \brGrig$. (In this section all trees and forests are binary.) As before, consider elements of the form $[T_-,g,F_+]$ (for $T_-$ a tree), and mod out an equivalence relation given by right multiplication by elements of the form $[1,g',1]$, but this time only do so for $g=\beta(f_1,\dots,f_m)$ with
\[
f_1,\dots,f_m\in Z\defeq \langle b,c,d\rangle \le \brGrig\text{.}
\]
The reason for doing this is that $Z\cong\Z^2$ is of type $\F_\infty$, unlike $\brGrig$ itself. Write
\[
[T_-,g,F_+]_Z
\]
for the resulting equivalence classes, and write $\sim_Z$ for the equivalence relation.

As before, define a partial order on these equivalence classes $[\tau]_Z$ (for $\tau$ a triple of the form $\tau=(T_-,g,F_+)$) by declaring that $[\tau]_Z \le [\tau][F,1,1]_Z$ for any forest $F$ with $m$ roots. Note that we must account for changing representatives up to $\sim_Z$, so we moreover have $[\tau]_Z \le [\tau][1,g',1][F,1,1]_Z$ for any $g'\in B_m\wr Z$. This is not transitive as defined, since we must account for changing representatives up to $\sim_Z$, so we actually take the transitive closure. Thus an upper bound of $[\tau]_Z$ looks like $[\tau][F_1,g_1',1]\cdots[F_k,g_k',1]_Z$ for $F_i$ some forests and $g_i'$ elements of the relevant braid groups wreathed with $Z$ (not with all of $\brGrig$, to reiterate). Write $\Poset_2(B_*\wr\brGrig)_Z$ for this poset, or $\Poset_Z$ for short.
\begin{lemma}\label{lem:contract}
For any $g\in B_q\wr\brGrig$, there exists a forest $F$ such that $[1,g,1][F,1,1]=[F',g',1]$ for some $F'$ and some $g'$ not only in $B_m\wr \brGrig$ but even in $B_m\wr Z$ for appropriate $m$.
\end{lemma}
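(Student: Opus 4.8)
The idea is to use the braided wreath recursion to ``push'' an arbitrary element $g\in B_q\wr\brGrig$ down the tree, replacing it by an element whose coordinates lie in $Z=\langle b,c,d\rangle$, at the cost of expanding by a forest $F$. The key observation is the one already used throughout Subsection~\ref{ssec:brGrig}: for each generator $x\in\{a,b,c,d\}$ of $\brGrig$, the braided wreath recursion $\states(x)=(x_1,x_2)$ has both coordinates $x_1,x_2$ lying in a \emph{smaller} subgroup — indeed $\states(a)=(1,1)$, $\states(b)=(a,c)$, $\states(c)=(a^{-1},d)$, $\states(d)=(1,b)$, so each coordinate is either trivial, a power of $a$, or an element of $Z$. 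More generally, a straightforward induction on word length shows that for any $f\in\brGrig$, iterating the braided wreath recursion enough times produces coordinates that are all powers of $a$ times elements of $Z$; in fact since $\states(a^k)=(\zeta^k\text{-stuff})$ with trivial tuple part, one more level of recursion turns a power of $a$ into a pair of trivial elements, so after finitely many levels every coordinate lies in $Z$ (this is exactly the ``length contraction'' phenomenon of Lemma~\ref{lem:br_grig_contract}, which guarantees that the recursion depth needed is finite and bounded in terms of $|f|$).

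Concretely, first I would reduce to the case $q=1$, i.e.\ $g=\beta(f)$ for a single $f\in\brGrig$ and $\beta\in B_1=\{1\}$, since the general case follows by treating each of the $q$ coordinates separately and taking $F$ to be the disjoint union of the resulting forests (the braid part $\beta\in B_q$ is carried along harmlessly by the cloning maps, just as in Lemma~\ref{lem:well_def}). So suppose $g=(f)$ with $f\in\brGrig$. Writing $f$ as a reduced word and applying Lemma~\ref{lem:br_grig_contract} repeatedly, there is an $N$ (depending on $|f|$) such that for the depth-$N$ binary forest $F$ with $2^N$ leaves, the element $[1,g,1][F,1,1]=[F',g',1]$ has $g'=\beta'(f'_1,\dots,f'_{2^N})$ with every $f'_j\in Z$; here $F'$ is determined by the representation maps $\rho$ applied along the way (i.e.\ $F'$ records how the permutation parts of the iterated recursions of $f$ act), and $\beta'$ is the braid obtained by cloning. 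The equality $[1,g,1][F,1,1]=[F',g',1]$ is precisely an instance of the expansion move in the groupoid $\Groupoid$: expanding $(1,g,1)$ by adding the caret forest $F$ to the bottom is, by definition of the cloning maps on $\brW_*$ (Subsection~\ref{ssec:braid_aaut}), the same as applying the iterated braided wreath recursion to $g$, which by construction lands in $B_m\wr Z$ with $m=2^N$.

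The main obstacle — or really the only thing requiring care — is making the ``finitely many levels suffice'' claim precise and uniform enough: one must check that applying the braided wreath recursion does not just eventually put \emph{one} coordinate into $Z$ but \emph{all} of them simultaneously, and that the powers of $a$ and the pure-braid corrections $\zeta^n$ generated along the way (cf.\ the formula $(b^kc^m)a^n=\zeta^n(a^{k-m},b^{-m}c^{k-m})$ from the proof of Lemma~\ref{lem:br_grig_contract}) get absorbed. This is handled by noting that a power of $a$ has $\states(a^k)=\zeta^k(1,1)$, so one further expansion replaces it by trivial coordinates (which lie in $Z$ vacuously), while the braid factors $\zeta^k$ simply accumulate into the braid part $\beta'$ and never affect which subgroup the \emph{tuple} coordinates lie in. Since by Lemma~\ref{lem:br_grig_contract} the reduced length of the coordinates is non-increasing and strictly drops until the coordinates have length $\le 1$ (hence are powers of $a$ or elements of $Z$), the process terminates after a bounded number of steps, giving the desired $F$, $F'$, and $g'\in B_m\wr Z$.
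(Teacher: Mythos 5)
Your overall strategy is the one the paper uses: reduce to a single coordinate $f\in\brGrig$ with trivial braid part, iterate the braided wreath recursion, invoke Lemma~\ref{lem:br_grig_contract} to contract reduced length down to at most $1$, and note that a stray power of $a$ becomes trivial after one more expansion while the $\zeta$-factors are absorbed into the braid part. However, there is a concrete error in the implementation: the claim that the \emph{uniform} depth-$N$ binary forest with $2^N$ leaves works is false. The problem is that the property ``all coordinates lie in $Z$'' is not preserved by further uniform expansion: $\states(b)=(a,c)$ and $\states(c)=(a^{-1},d)$, so expanding a coordinate that already lies in $Z$ reintroduces powers of $a$. Worse, the set of further expansion depths at which the subtree below a leaf labeled $b$ has all coordinates in $Z$ is $\{0,3,6,\dots\}$, while for a leaf labeled $d$ it is $\{0,1,4,7,\dots\}$, and these meet only in $\{0\}$. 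For instance $f=baca^{-1}$ has $\states(f)=(ad,ca^{-1})$ and depth-$2$ coordinates $(1,b,d,a^{-1})$; the $a^{-1}$ forces at least one more level of expansion, while the $b$ and the $d$ demand incompatible residues modulo $3$, so \emph{no} uniform depth puts all coordinates in $Z$ simultaneously.

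The fix is exactly what the paper's induction on reduced length accomplishes: take $F$ to be a non-uniform forest, expanding each branch only until its coordinate first lands in $Z$ (or becomes a power of $a$, in which case one further caret kills it) and then stopping that branch. With that change --- letting the trees over different leaves have different shapes rather than insisting on the full binary tree of a fixed depth --- your argument is correct and coincides with the paper's.
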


\begin{proof}
Intuitively, this means that we can apply successive braided wreath recursions to $g$ until all the ``$a$''s are gone.

First note that if $g\in B_q$ then the result holds for any $F$, not just some $F$, so without loss of generality $g$ has no ``braid part'', i.e., $g\in \prod_q\brGrig$ (and at this point the desired $F'$ will be $F$). Now clearly without loss of generality $q=1$, so $g\in\brGrig$, and we are looking for a tree $T$ such that $[1,g,1][T,1,1]=[T,g',1]$ for $g'\in B_m\wr Z$. Write
\[
g=z_1 a^{k_1} z_2\cdots a^{k_\ell}z_{\ell+1}
\]
for $z_i\in Z$ and $k_i\in\Z$, as a reduced expression in the sense of Lemma~\ref{lem:br_grig_contract}. We will induct on the length of this expression. If the length is $1$ and $g\in Z$, we can just take $T$ to be trivial, and if the length is $1$ and $g$ is a power of $a$ then we can take $T$ to be a single caret. Now assume the length is greater than $1$. Let $\wedge$ be the tree with one caret, so $[1,g,1]=[\wedge,(g)\clone_1^1,\wedge]$. We have that $(g)\clone_1^1$ is of the form $\zeta^k(g_1,g_2)$ for some $k\in\Z$ and $g_1,g_2\in\brGrig$, and by Lemma~\ref{lem:br_grig_contract} the length of each of $g_1$ and $g_2$ is less than that of $g$. By induction we can therefore choose a tree $T$ such that $[1,g,1]=[T,g',T]$ with $g'\in B_m\wr Z$ for some $m$. Now $[1,g,1][T,1,1]=[T,g',1]$ as desired.
\end{proof}

\begin{corollary}
The poset $\Poset_Z$ is directed, so its geometric realization is contractible.
\end{corollary}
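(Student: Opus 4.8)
The plan is to follow the same template used for the ordinary Stein--Farley poset $\Poset_d(G_*)$ in Subsection~\ref{ssec:stein_farley}: show that $\Poset_Z$ is directed, and then invoke the standard fact that the geometric realization of a directed poset is contractible (every finite subposet has an upper bound, hence lies in a cone, so $|\Poset_Z|$ is weakly contractible and, being a CW complex, contractible). Thus the real content is to produce, for any two elements of $\Poset_Z$, a common upper bound, and for this Lemma~\ref{lem:contract} is what replaces the trivial reduction $[T_-,g,F_+]_G\le[T_-,1,1]_G$ available in the non-braided case.

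First I would reduce an arbitrary element $[T_-,g,F_+]_Z$ to the form $[S,1,1]_Z$ with $S$ a tree, in two moves. Multiplying on the right by $[F_+,1,1]$ collapses $F_+$, giving $[T_-,g,F_+]_Z\le[T_-,g,1]_Z$ with the trivial right forest having as many roots as $T_-$ has leaves, so that $g$ now sits in a wreath product $B_n\wr\brGrig$ with exactly that many factors --- precisely the situation Lemma~\ref{lem:contract} is phrased for. Applying Lemma~\ref{lem:contract} to $g$ gives a forest $F$ with $[1,g,1][F,1,1]=[F',g',1]$ for some $F'$ and some $g'\in B_N\wr Z$; multiplying $[T_-,g,1]$ on the right by $[F,1,1]$ then yields $[T_-,g,1]_Z\le[T_-\cdot F',g',1]_Z=[T_-\cdot F',1,1]_Z$, the last equality because $g'\in B_N\wr Z$ and $\sim_Z$ is exactly right multiplication by such elements. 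So every element of $\Poset_Z$ lies below one of the form $[S,1,1]_Z$ with $S$ a tree.

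It then remains to bound two elements $[S,1,1]_Z$ and $[S',1,1]_Z$. For any forest $F$ with the appropriate number of roots one has $[S,1,1]_Z\le[S,1,1][F,1,1]_Z=[S\cdot F,1,1]_Z$, i.e.\ $[S,1,1]_Z$ lies below $[U,1,1]_Z$ for every tree $U$ refining $S$; taking $U$ to be a common refinement (the union) of $S$ and $S'$ gives the desired common upper bound, and directedness --- hence contractibility --- follows. The only real care needed is the ordering of the two reductions: $F_+$ must be collapsed before Lemma~\ref{lem:contract} can be invoked, since that lemma requires the group element to live in the wreath product with the matching number of factors. Beyond that it is routine groupoid bookkeeping; all the genuine difficulty was already absorbed into Lemma~\ref{lem:contract} and, through it, the contraction estimate Lemma~\ref{lem:br_grig_contract}.
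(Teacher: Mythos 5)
Your proof is correct and follows essentially the same route as the paper: collapse the right-hand forest to reduce to $[T_-,g,1]_Z$, invoke Lemma~\ref{lem:contract} to absorb $g$ into an element of $B_m\wr Z$ (hence trivial modulo $\sim_Z$), and then take a common refinement of the resulting trees. The paper's proof is just a terser version of the same argument, so there is nothing to add.
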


\begin{proof}
Let $[T_-,g,F_+]_Z$ and $[U_-,h,D_+]_Z$ be elements of $\Poset_Z$. Up to taking upper bounds, we can assume $F_+$ and $D_+$ are trivial. Now observe that thanks to Lemma~\ref{lem:contract} there exist forests $F$ and $F'$ such that $[1,g,1][F,1,1]=[F',g',1]$ for $g'$ not only in $B_m\wr \brGrig$ (for appropriate $m$) but even in $B_m\wr Z$. Hence $[1,g,1][F,1,1]_Z=[F',1,1]_Z$. This shows that, up to taking upper bounds, we can assume $g$ and $h$ are trivial. At this point our elements are $[T_-,1,1]_Z$ and $[U_-,1,1]_Z$, which clearly have an common upper bound.
\end{proof}

Our next goal is to construct an analog of the Stein--Farley complex. It will no longer have a cubical structure, but it will have a polysimplicial structure. First we need some definitions and notation.

\begin{definition}[Direct sum]
Given braids $\beta\in B_m$ and $\beta'\in B_{m'}$, the \emph{direct sum} $\beta\oplus\beta'$ is the element of $B_{m+m'}$ obtained by setting $\beta$ and $\beta'$ next to each other, so the first $m$ strands braid according to $\beta$, and the last $m'$ strands braid according to $\beta'$. Given a forest $F$ with $q$ roots and a forest $F'$ with $q'$ roots, the \emph{direct sum} $F\oplus F'$ is the forest with $q+q'$ roots whose first $q$ trees comprise $F$ and whose last $q'$ trees comprise $F'$. Now let $[F_-,g,F_+],[F_-',g',F_+']\in\Groupoid$. Say $g=\beta(f_1,\dots,f_m)$ and $g'=\beta'(f_1',\dots,f_{m'}')$. Let $F_-''=F_-\oplus F_-'$, let $F_+''=F_+\oplus F_+'$, let $\beta''=\beta\oplus\beta'$, and let $g''=\beta''(f_1,\dots,f_m,f_1',\dots,f_{m'}')$. Then the \emph{direct sum} $[F_-,g,F_+]\oplus [F_-',g',F_+']$ is the element $[F_-'',g'',F_+'']$ of $\Groupoid$. 
\end{definition}

\begin{definition}[Splitting, simple splitting]
Call an element of $\Groupoid$ of the form
\[
\splitting=[F_1,g_1,1]\cdots[F_k,g_k,1]
\]
for $g_i\in B_{m_i}\wr Z$ (for appropriate $m_i$) a \emph{splitting}, so in the poset $\Poset_Z$ we have $[\tau]_Z\le [\tau]\splitting_Z$ for any splitting $\splitting$. Call this a \emph{one-head splitting} if $F_1$ is a tree. A \emph{simple splitting} is one that is $\sim_Z$-equivalent to a direct sum of finitely many copies of $1$ together with a one-head splitting $\splitting$. Intuitively, a splitting is simple if only one ``head'' actually gets split. Note that every splitting is a product of simple splittings.
\end{definition}

\begin{definition}[Weakly elementary]
Write $\wedge$ for the tree with one caret. Write $\wedge^2$ for the tree obtained from $\wedge$ by adding a caret to its left leaf. Call a splitting \emph{weakly elementary} if it is $\sim_Z$-equivalent to a direct sum of finitely many splittings, each of which is of one of the following forms:
\[
[1,1,1],[\wedge,(a^k,1),1]\text{ for some } k\in\Z,\text{ or } [\wedge^2,1,1]\text{.}
\]
Let us abuse notation and write $1=[1,1,1]$, $\wedge(a^k,1)=[\wedge,(a^k,1),1]$, and $\wedge^2=[\wedge^2,1,1]$, so the weakly elementary splittings are those that are $\sim_Z$-equivalent to direct sums of copies of $1$, $\wedge(a^k,1)$ ($k\in\Z$), and $\wedge^2$.
\end{definition}

Note that the splitting $[1,b^k,1][\wedge,(1,c^{-k}),1]$ equals $[\wedge,(a^k,c^k),\wedge][\wedge,(1,c^{-k}),1] = \wedge(a^k,1)$, so $\wedge(a^k,1)$ really is a splitting. Also note that any weakly elementary simple splitting is $\sim_Z$-equivalent to a direct sum of finitely many copies of $1$ together with a single splitting of the form $1$, $\wedge(a^k,1)$ for some $k\in\Z$, or $\wedge^2$.

For $[\tau]_Z\in \Poset_Z$ and $\splitting$ a splitting such that the product $[\tau]\splitting$ makes sense, so $[\tau]_Z\le [\tau]\splitting_Z$, write $[\tau]_Z\trianglelefteq [\tau]\splitting_Z$ if $\splitting$ is weakly elementary. Call a simplex in $|\Poset_Z|$ \emph{weakly elementary} if it is of the form $x_0<\cdots<x_k$ with $x_i\trianglelefteq x_j$ for all $i<j$. The weakly elementary simplices form a subcomplex of $|\Poset_Z|$. We will denote this subcomplex by $\Stein_2(B_*\wr\brGrig)_Z$, or $\Stein_Z$ for short, and call it the \emph{$Z$-Stein--Farley complex} for the braided R\"over group. (Later we will identify $\Stein_Z$ with a polysimplicial complex that is more akin to the Stein--Farley cube complexes constructed earlier for general $d$-ary cloning systems.)

\begin{proposition}\label{prop:roever_Stein_cible}
The $Z$-Stein--Farley complex $\Stein_Z$ is contractible.
\end{proposition}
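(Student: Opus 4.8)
The plan is to show that the inclusion $\Stein_Z \hookrightarrow |\Poset_Z|$ is a homotopy equivalence, which suffices since $|\Poset_Z|$ is contractible by the corollary to Lemma~\ref{lem:contract}. This mimics the contractibility argument for the general Stein--Farley complex in Subsection~\ref{ssec:stein_farley}, which in turn follows \cite[Lemma~4.7]{witzel18}. Concretely, I would build up from $\Stein_Z$ to $|\Poset_Z|$ by gluing in, one at a time, the closed intervals $|[x,y]|$ for $x = [\tau]_Z \le y = [\tau]\splitting_Z$ with $\splitting$ a splitting that is \emph{not} weakly elementary, doing so in increasing order of some integer-valued ``complexity'' $h(y)-h(x)$, where $h$ records the number of feet. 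When $|[x,y]|$ is glued in, it is attached along $|[x,y)| \cup |(x,y]|$, which is the suspension of $|(x,y)|$; so it is enough to prove that every such open interval $|(x,y)|$ is contractible.

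For the open interval, I would use the conical contractibility criterion exactly as in Subsection~\ref{ssec:stein_farley}: given $z \in (x,y]$, define $g(z)$ to be the largest element of $[x,z]$ with $x \trianglelefteq g(z)$, i.e.\ one passes to the largest \emph{weakly elementary} splitting sitting below the splitting that produces $z$ from $x$. The point is that each forest-with-$a$'s splitting has a well-defined maximal weakly elementary sub-splitting: in each head one keeps at most two carets (a $\wedge^2$) or at most one caret carrying a power of $a$ (a $\wedge(a^k,1)$), discarding the rest and any non-$Z$, non-$a$-power decoration. One checks $g$ is a poset self-map, that $g(z) \in (x,y)$ (it lies in $(x,y]$ trivially and in $[x,y)$ because $y$ itself is produced by a non-weakly-elementary splitting, so its maximal weakly elementary part is strictly below it), and that $g(z) \le g(y) =: y_0$ for all $z \in (x,y)$. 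Then $z \ge g(z) \le y_0$ exhibits $(x,y)$ as conically contractible, hence $|(x,y)|$ is contractible.

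The main subtlety — and the step I expect to require the most care — is checking that $g$ is genuinely well defined and order-preserving \emph{modulo} the equivalence relation $\sim_Z$. Unlike in the cubical case, splittings here are only defined up to $\sim_Z$ and the partial order is a transitive closure, so ``the largest weakly elementary element below $z$'' must be shown to be independent of the chosen representative, and monotonicity $z \le z' \Rightarrow g(z) \le g(z')$ must be verified in the presence of the $[1,g',1]$ factors with $g' \in B_m\wr Z$. This amounts to observing that right-multiplication by an element of $B_m\wr Z$ changes neither which heads carry a $\wedge$, $\wedge^2$, or $a^k$-labelled caret nor the relevant exponent $k$ (the braid part and the $Z$-decorations are absorbed into the $\sim_Z$ class), so the maximal weakly elementary sub-splitting is a $\sim_Z$-invariant of $z$. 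Once this bookkeeping is in place, the rest of the argument is the standard Morse-theoretic/conical-contractibility machinery, applied verbatim.
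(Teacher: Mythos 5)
Your overall architecture is the same as the paper's: reduce to showing $|(x,y)|$ is contractible for $x\le y$ with $x\not\trianglelefteq y$, then exhibit conical contractibility via the map sending $z\in(x,y]$ to the largest weakly elementary element of $[x,z]$. You also correctly identify that well-definedness of this map is the crux. The gap is in how you dispose of that crux. You assert that the maximal weakly elementary sub-splitting is a $\sim_Z$-invariant because right multiplication by $B_m\wr Z$ preserves ``the relevant exponent $k$.'' That is false as stated: whenever $\wedge^2_Z\le y$ one has $\wedge(a^k,1)_Z\le\wedge^2_Z\le y$ for \emph{every} $k\in\Z$ (indeed $\wedge(a^k,1)^{-1}\wedge^2=[\wedge\oplus 1,\zeta^{-k}\oplus 1,1]$ is a splitting), so the exponent is not an invariant of $y$ at all. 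More importantly, your observation does not rule out the configuration that would actually destroy the argument: two incomparable elements $\wedge(a^k,1)_Z$ and $\wedge(a^\ell,1)_Z$ with $k\ne\ell$ both lying in $(x,y)$ while $\wedge^2_Z\not\le y$. In that case there is no largest weakly elementary element below $y$, and the conical contraction has no target.

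The missing ingredient is exactly the lemma that occupies the body of the paper's proof: if $\wedge(a^k,1)_Z\le y$ and $\wedge(a^\ell,1)_Z\le y$ for $k\ne\ell$, then $\wedge^2_Z\le y$. This is not representative-bookkeeping but a computation in the groupoid $\Groupoid$ that uses the algebra of $\brGrig$. Writing $y=\wedge(a^k,1)\splitting_Z=\wedge(a^\ell,1)\splitting'_Z$ gives $\splitting^{-1}(a^{\ell-k},1)\splitting'\in B_m\wr Z$; if neither $\splitting$ nor $\splitting'$ splits the first head, comparing first direct summands forces $z^{-1}a^{\ell-k}z'\in Z$ for some $z,z'\in Z$, which is impossible since $a^{\ell-k}\notin Z$ (nonzero powers of $a$ have nontrivial braid part $\phi$, while every element of $Z=\langle b,c,d\rangle$ has trivial $\phi$). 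Hence one of the splittings must split the first head and $\wedge^2_Z\le y$. With this lemma in hand, the largest weakly elementary element below $y$ is well defined (it is $\wedge^2$ in each head where two distinct exponents occur, and the unique $\wedge(a^k,1)$ or $1$ otherwise), and the rest of your argument goes through verbatim. Without it, the key step is unsupported, and it is precisely the point where the structure of the braided Grigorchuk group enters.
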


\begin{proof}
We can follow the argument in Subsection~\ref{ssec:stein_farley} that the usual Stein--Farley complex for a $d$-ary cloning system is contractible, in exactly the same way, and reduce the problem to proving that the realization $|(x,y)|$ is contractible, for any $x\le y$ in $\Poset_Z$ with $x\not\trianglelefteq y$. This in turn will follow by the same argument as in Subsection~\ref{ssec:stein_farley}, provided we can show that for any $x\le y$ with $x\not\trianglelefteq y$ there exists a unique largest $y_0\in (x,y)$ such that $x\trianglelefteq y_0$. Without loss of generality $x=1$. Analogously to the proofs of Lemma~4.7 and Proposition~4.8 of \cite{belk16} (for the R\"over group), it suffices to prove that if we have both $\wedge(a^k,1)_Z\le y$ and $\wedge(a^\ell,1)_Z\le y$ for some $k\ne \ell$ then $\wedge^2_Z\le y$. Let $\splitting$ and $\splitting'$ be splittings such that $\wedge(a^k,1)\splitting_Z=y$ and $\wedge(a^\ell,1)\splitting'_Z=y$, so $\wedge(a^k,1)\splitting_Z=\wedge(a^\ell,1)\splitting'_Z$. Thus $\splitting^{-1}(a^{\ell-k},1)\splitting'\in B_m\wr Z$ for appropriate $m$ (identifying $B_m\wr Z$ with its image in $\Groupoid$ under $g\mapsto [1,g,1]$). Now note that if $\wedge^2_Z\not\le y$ then we have $\splitting=z\oplus \overline{\splitting}$ and $\splitting'=z'\oplus \overline{\splitting}'$ for some $z,z'\in Z$ and some splittings $\overline{\splitting}$ and $\overline{\splitting}'$ (that is, neither $\splitting$ nor $\splitting'$ can involve ``splitting the first head''). But then $\splitting^{-1}(a^{\ell-k},1)\splitting'$ is the direct sum of $z^{-1}a^{\ell-k}z'$ with some element of $B_{m-1}\wr Z$, which implies $z^{-1}a^{\ell-k}z'\in Z$, a contradiction since $a^{\ell-k}\not\in Z$. We conclude that $\wedge^2_Z\le y$.
\end{proof}

\subsection{Polysimplicial structure}\label{ssec:poly}

Now let us describe the polysimplicial structure on $\Stein_Z$. A \emph{polysimplex} is a euclidean polytope that is a product of simplices. A \emph{polysimplicial complex} is an affine cell complex whose cells are polysimplices, and such that any two cells intersect in a (possibly empty) common face of each. Polysimplicial complexes are a simultaneous generalization of simplicial and cubical complexes. The polysimplicial structure on $\Stein_Z$ is built out of the following pieces:

\begin{definition}[Basic polysimplex, bottom vertex]
Let $[\tau]_Z$ be a vertex in $\Stein_Z$, say with $h([\tau]_Z)=m$ (here $h$ is the ``number of feet'' function from before). For each $1\le i\le m$ let $S_i$ be one of the following sets:
\[
\{1\}\text{, } \{1,\wedge(a^k,1)\}\text{ for some }k\in\Z\text{, } \{1,\wedge^2\}\text{, or }\{1,\wedge(a^k,1),\wedge^2\}\text{ for some }k\in\Z\text{.}
\]
Then the corresponding \emph{basic polysimplex} $\poly([\tau];S_1,\dots,S_m)$ is the full subcomplex of $\Stein_Z$ spanned by the vertex set
\[
\{[\tau](\splitting_1\oplus\cdots\oplus\splitting_m)_Z\mid \splitting_i\in S_i\}\text{.}
\]
The \emph{bottom} vertex of this basic polysimplex is $[\tau]_Z$.
\end{definition}

Note that $\poly([\tau];S_1,\dots,S_m)$ is a subdivision of a product of simplices, one for each $S_i$, where the dimension of the simplex corresponding to $S_i$ is $|S_i|-1$. In particular it makes sense to view this as a polysimplex.

It is clear that every weakly elementary simplex lies in some basic polysimplex, so the basic polysimplices cover $\Stein_Z$. To see that the basic polysimplices form a polysimplicial complex with $\Stein_Z$ as a simplicial subdivision, it remains to show that any intersection of two basic polysimplices is a common face of each. Before proving this, we need a definition.

\begin{definition}[Depth, non-expanding]
For a forest $F$, the \emph{depth} of a leaf is the distance from the leaf to the root of the tree in $F$ containing the leaf. Let $[F_-,\beta(g_1,\dots,g_m),F_+]\in\Groupoid$. Call this groupoid element \emph{non-expanding at the $j$th foot} if for every leaf of the $j$th tree of $F_+$, say it is the $i$th leaf of $F_+$, we have that the depth of the $i$th leaf in $F_+$ is at most the depth of the $\pi(\beta)(i)$th leaf of $F_-$. (Note that this property is invariant under expansions and reductions, so is well defined on elements of $\Groupoid$.) If an element is non-expanding at the $j$th foot for all $j$, call it \emph{non-expanding}.
\end{definition}

The terminology ``non-expanding'' is inspired by the non-braided case, as in \cite{belk16}, where groupoid elements are homeomorphisms between disjoint unions of copies of the Cantor set.

\begin{lemma}\label{lem:poly_int}
Any intersection of two basic polysimplices is a common face of each.
\end{lemma}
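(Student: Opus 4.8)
The plan is to run the by-now standard argument that a Stein--Farley-type complex is genuinely a (poly)simplicial complex, as in Subsection~\ref{ssec:stein_farley} and \cite{witzel18,belk16}, the only new ingredient being the exchange computation already carried out inside the proof of Proposition~\ref{prop:roever_Stein_cible}. The first step is to record a normal form for vertices: if $x=[\tau]_Z$ is a vertex with $h(x)=m$, then every vertex $v$ of $\Stein_Z$ with $x\trianglelefteq v$ can be written $v=[\tau](\splitting_1\oplus\cdots\oplus\splitting_m)_Z$ with each $\splitting_i$ a weakly elementary simple splitting, i.e.\ $1$, $\wedge(a^k,1)$ for some $k\in\Z$, or $\wedge^2$. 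Existence is the definition of weakly elementary; uniqueness follows by representing $[\tau]$ and $v$ by reduced, non-expanding groupoid triples, which pins down the underlying forests (hence each $\splitting_i$ up to the exponent $k$), the exponent then being determined modulo $Z$, hence exactly, since $a^k\in Z$ forces $k=0$ (Lemma~\ref{lem:Z2_in_brGrig}; cf.\ Lemma~\ref{lem:br_grig_contract}). Consequently a basic polysimplex is faithfully recorded by its bottom vertex together with its tuple of admissible shapes, and its faces are exactly the basic polysimplices obtained by replacing each shape by a nonempty subshape of one of the four admissible types and reading off the new bottom vertex by absorbing the $\trianglelefteq$-least element of each subshape on the corresponding foot.

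Next, let $P=\poly([\tau];S_1,\dots,S_m)$ and $Q=\poly([\upsilon];T_1,\dots,T_n)$ share a vertex. Fix any common vertex $z$ and consider reducing $z$ inside $P\cap Q$, that is, repeatedly absorbing an outermost caret of its forest data so long as the smaller vertex still lies in both $P$ and $Q$. The key claim is that there is a \emph{unique} $\le$-largest such reduction $w$, equivalently a unique $\le$-minimal vertex of $P\cap Q$. Reducing a foot-block of $z$ that carries a double split $\wedge^2$ can stop either at $1$ or, if both $P$ and $Q$ permit it, at a single split $\wedge(a^k,1)$; a short computation gives $\wedge(a^k,1)_Z\trianglelefteq\wedge^2_Z$ for \emph{every} $k$, so the only obstruction to uniqueness is that $P$ might permit $\wedge(a^k,1)$ while $Q$ permits $\wedge(a^\ell,1)$ with $k\ne\ell$; in that case, exactly as in the proof of Proposition~\ref{prop:roever_Stein_cible} (where the same clash between two $a$-twists was resolved), no common vertex sits above both single splits without sitting above $\wedge^2$, so the only legal reduction of that block keeps the full double split, and there is no ambiguity. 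This is where one uses non-expanding representatives, which make the ``outermost caret'' operations and the comparisons between $P$-feet and $Q$-feet unambiguous; and it shows $w$ is a bona fide common vertex of $P$ and $Q$, namely their bottom.

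It then remains to identify $P\cap Q$ with the basic polysimplex based at $w$ whose shape on each foot-block of $w$ is the intersection of the shape permitted there by $P$ with that permitted by $Q$; one checks this intersection is again one of the four admissible shapes — this is exactly why $\{1,\wedge(a^k,1),\wedge^2\}$ had to be allowed, so that a clash between $\{1,\wedge(a^k,1)\}$ and $\{1,\wedge^2\}$ collapses to the admissible $\{1\}$ rather than to an inadmissible set. Since $w$ lies in both $P$ and $Q$ and the resulting shape tuple is coordinatewise contained in those of $P$ and of $Q$, this basic polysimplex is a face of each, and it equals $P\cap Q$ because $\Stein_Z$ is a full complex on its vertices, so a simplex lies in $P\cap Q$ precisely when all of its vertices do. The main obstacle is the uniqueness-of-reduction step: it is exactly where two basic polysimplices could fail to meet in a common face, and it is rescued only by the computation behind Proposition~\ref{prop:roever_Stein_cible} that a forbidden $a$-twist mismatch is absorbed into the forest structure; even granting that, aligning the $P$-feet and $Q$-feet of a common vertex so that the shape intersections are well defined is the delicate bookkeeping.
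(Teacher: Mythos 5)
Your overall strategy is the right one (it is the Belk--Matucci argument that the paper follows): show the two basic polysimplices share a well-defined bottom vertex with compatible foot decompositions, then identify $P\cap Q$ with the basic polysimplex on the coordinatewise intersections $S_i\cap T_i$. Your observations that these intersections are again admissible shapes, and that distinct exponents give distinct vertices because $a^j\in Z$ forces $j=0$, are both correct.

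However, the central step is missing. You reduce everything to the claim that $P\cap Q$ has a unique $\le$-minimal vertex whose foot-blocks can be read consistently from both $P$ and $Q$, and then you write that this is rescued by ``non-expanding representatives'' and that aligning the $P$-feet with the $Q$-feet ``is the delicate bookkeeping'' --- but you never do that bookkeeping, and the phrase is not a substitute for an argument (non-expanding is a property of a groupoid element, not a choice of representative, and it is not automatic here; it has to be deduced). This is exactly the step the paper carries out concretely: for common vertices $v,v'$ one forms the coordinatewise meets $v\wedge_P v'=[\tau]_Z$ and $v\wedge_Q v'=[\omega]_Z$, writes $[\omega]^{-1}[\tau]$ in two ways as a product of splittings, an element of $B_r\wr Z$, and inverse splittings, and observes that the meet condition (for each foot, $\splitting_i=1$ or $\splitting_i'=1$) forces $[\omega]^{-1}[\tau]$ to be non-expanding; by symmetry $[\tau]^{-1}[\omega]$ is also non-expanding, and mutually inverse non-expanding elements of this form can only occur when every splitting involved is trivial, whence $[\tau]_Z=[\omega]_Z$ and $\wedge_P=\wedge_Q$ on $P\cap Q$. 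Without some such computation your ``unique largest reduction'' claim is unsupported: a priori a common vertex $z$ could sit over $[\tau]_Z$ in $P$ and over a genuinely different $[\omega]_Z$ in $Q$, with incompatible groupings of its feet, and nothing you have written rules this out. (Your treatment of the $k\ne\ell$ clash is also garbled: if $P$ permits $\wedge(a^k,1)$ and $Q$ permits $\wedge(a^\ell,1)$ on the same block with $k\ne\ell$, the common vertices on that block are those over $1$ and over $\wedge^2$, so a reduction can certainly pass below the double split --- it just cannot stop at a single split; in any case that clash is resolved by the admissibility of $\{1,\wedge^2\}$ and is not the real difficulty of the lemma.)
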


\begin{proof}
The proof of Lemma~5.3 of \cite{belk16} for the R\"over group can almost be copied verbatim for the braided case (up to changing notation and some left/right conventions), with just one step requiring justification, which we will point out when we reach it. Let $P=\poly([\tau];S_1,\dots,S_m)$ and $Q=\poly([\omega];T_1,\dots,T_q)$ be basic polysimplices. Each $S_i$ has a natural total order, consistent with $\le$ in $\Poset_Z$ when considered up to the ``mod $Z$'' equivalence relation, so it makes sense to take the minimum of a collection of elements of a given $S_i$. Define a binary operation $\wedge_P$ on the vertices of $P$ via
\[
[\tau](\splitting_1\oplus\cdots\oplus\splitting_m)_Z \wedge_P [\tau](\splitting_1'\oplus\cdots\oplus\splitting_m')_Z \defeq [\tau](\min(\splitting_1,\splitting_1')\oplus\cdots\oplus\min(\splitting_m,\splitting_m'))_Z \text{.}
\]
Define $\wedge_Q$ analogously. If $P\cap Q=\emptyset$ we are done, so suppose not. Let $v,v'\in P\cap Q$, and we claim $v\wedge_P v'=v\wedge_Q v'$. Without loss of generality $v\wedge_P v'=[\tau]_Z$ and $v\wedge_Q v'=[\omega]_Z$. Choose $\splitting_i,\splitting_i'\in S_i$ and $\altsplitting_i,\altsplitting_i'\in T_i$ such that
\begin{align*}
&v=[\tau](\splitting_1\oplus\cdots\oplus\splitting_m)_Z=[\omega](\altsplitting_1\oplus\cdots\oplus\altsplitting_q)_Z\\
\text{ and } &v'=[\tau](\splitting_1'\oplus\cdots\oplus\splitting_m')_Z=[\omega](\altsplitting_1'\oplus\cdots\oplus\altsplitting_q')_Z \text{.}
\end{align*}
Now for appropriate $r$ and $p$, choose $\beta\in B_r$, $\beta'\in B_p$, and $z_1,\dots,z_r,z_1',\dots,z_p'\in Z$ such that
\begin{align*}
&[\tau](\splitting_1\oplus\cdots\oplus\splitting_m)=[\omega](\altsplitting_1\oplus\cdots\oplus\altsplitting_q)\beta(z_1,\dots,z_r)\\
\text{ and } &[\tau](\splitting_1'\oplus\cdots\oplus\splitting_m')=[\omega](\altsplitting_1'\oplus\cdots\oplus\altsplitting_q')\beta(z_1',\dots,z_p') \text{.}
\end{align*}
(As before we identify $B_r\wr Z$ and $B_p\wr Z$ with their images in $\Groupoid$ under $g\mapsto [1,g,1]$.) Solving for $[\omega]^{-1}[\tau]$ in each equation yields
\begin{align*}
[\omega]^{-1}[\tau] &= (\altsplitting_1\oplus\cdots\oplus\altsplitting_q)\beta(z_1,\dots,z_r)(\splitting_1\oplus\cdots\oplus\splitting_m)^{-1} \\
&=(\altsplitting_1'\oplus\cdots\oplus\altsplitting_q')\beta'(z_1',\dots,z_p')(\splitting_1'\oplus\cdots\oplus\splitting_m')^{-1} \text{.}
\end{align*}

We want to conclude that $[\tau]_Z=[\omega]_Z$, and this is the one step that cannot be copied verbatim from the non-braided case. However, it does work analogously, using the above definition of non-expanding. Since $v\wedge_P v'=[\tau]_Z$, we either have $\splitting_i=1$ or $\splitting_i'=1$ for each $i$. This shows that $[\omega]^{-1}[\tau]$ is non-expanding. Analogously, $[\tau]^{-1}[\omega]$ is non-expanding. The only way this can happen, given the options for the $\splitting_i$ and $\altsplitting_i$, is if every $\splitting_i$ and $\altsplitting_i$ is trivial. We conclude that $[\tau]_Z=[\omega]_Z$, so $\wedge_P$ and $\wedge_Q$ agree when restricted to $P\cap Q$. By the exact same argument as in the proof of \cite[Lemma~5.3]{belk16}, this shows that without loss of generality $[\tau]=[\omega]$, and $P\cap Q=\poly([\tau];S_1\cap T_1,\dots,S_m\cap T_m)$, which is a common face of both $P$ and $Q$.
\end{proof}

At this point we can view $\Stein_Z$ either with its simplicial structure, or with this new polysimplicial structure. We will write $\Stein_Z$ for both, and no confusion should arise.

\begin{lemma}\label{lem:br_ro_stabs}
Any stabilizer in $\brR$ of a polysimplex in $\Stein_Z$ is of type $\F_\infty$.
\end{lemma}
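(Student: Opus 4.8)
The plan is to follow Belk and Matucci's treatment of polysimplex stabilizers for the R\"over group \cite{belk16}, while carrying along the extra braid information.

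\textbf{Vertex stabilizers.} First I would compute that the $\brR$-stabilizer of a vertex $[\tau]_Z$ of $\Stein_Z$ with $h([\tau]_Z)=m$ is isomorphic to $B_m\wr Z=B_m\ltimes Z^m$, by exactly the computation used for general $d$-ary cloning systems in Subsection~\ref{ssec:stein_farley}: writing $[\tau]=[T_-,g,F_+]$ with $F_+$ having $m$ roots, an element $[U_-,h,U_+]\in\brR$ fixes $[\tau]_Z$ iff $[F_+,g^{-1},T_-][U_-,h,U_+][T_-,g,F_+]=[1,h',1]$ for some $h'\in B_m\wr Z$, and $[U_-,h,U_+]\mapsto h'$ is the desired isomorphism. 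Since $B_m$ and $Z\cong\Z^2$ are of type $\F_\infty$, and type $\F_\infty$ is closed under extensions (so $B_m\wr Z=B_m\ltimes Z^m$ is of type $\F_\infty$), every vertex stabilizer is of type $\F_\infty$. Because the action of $\brR$ preserves $h$, a polysimplex $P$ is fixed setwise only by elements fixing its unique $h$-minimal vertex, i.e.\ its bottom vertex $[\tau]_Z$, so $\Stab(P)\le\Stab([\tau]_Z)\cong B_m\wr Z$.

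\textbf{The action on the upward star.} The key step is to describe how $\Stab([\tau]_Z)$ acts on the vertices of $\Stein_Z$ lying just above $[\tau]_Z$. Every polysimplex of $\Stein_Z$ with bottom vertex $[\tau]_Z$ is a basic polysimplex $\poly([\tau];S_1,\dots,S_m)$, so it suffices to treat these. Working through the groupoid relations in $\Groupoid$ together with the braided wreath recursions of $b,c,d$ --- the braided analogue of \cite[Lemma~5.4]{belk16} --- an element $\gamma\in\Stab([\tau]_Z)$ corresponding to $\beta(z_1,\dots,z_m)\in B_m\ltimes Z^m$ acts on these vertices by permuting the feet according to $\sigma\defeq\pi(\beta)\in S_m$ and, at each foot $i$, sending the splitting choice $\wedge(a^k,1)$ to $\wedge(a^{k+p_i-q_i},1)$, where $z_i=b^{p_i}c^{q_i}$, while fixing the choices $1$ and $\wedge^2$ modulo $\sim_Z$. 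What makes this clean is that all the auxiliary braids (the cloned $\beta$) and all the local labels produced along the way either are braids with trivial tuple or lie in the appropriate $B_\bullet\wr Z$, and hence are absorbed by the relation $\sim_Z$.

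\textbf{Conclusion.} Given $P=\poly([\tau];S_1,\dots,S_m)$, let $A$ be the set of feet $i$ for which $S_i$ contains a splitting of the form $\wedge(a^{k_i},1)$. From the description above, $\gamma=\beta(z_1,\dots,z_m)$ lies in $\Stab(P)$ iff $\sigma=\pi(\beta)$ preserves the coloring of $\{1,\dots,m\}$ recording which of the four admissible forms each $S_i$ has, and $p_i-q_i=k_{\sigma(i)}-k_i$ for every $i\in A$. The first condition says $\beta$ lies in a fixed finite-index subgroup $H\le B_m$ (the $\pi$-preimage of the coloring-stabilizer in $S_m$). For a fixed such $\beta$, the second condition pins down $p_i-q_i$ for $i\in A$, cutting $Z^m$ down to a coset of $N\defeq\{(b^{p_i}c^{q_i})_i\in Z^m\mid p_i=q_i\text{ for all }i\in A\}\cong\Z^{2m-|A|}$. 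Hence $\Stab(P)\cap Z^m=N$ and $\Stab(P)$ surjects onto $H$, giving a short exact sequence
\[
1\longrightarrow N\longrightarrow \Stab(P)\longrightarrow H\longrightarrow 1\text{.}
\]
Since $N$ is finitely generated free abelian and $H$ is a finite-index subgroup of $B_m$, both are of type $\F_\infty$, and therefore so is $\Stab(P)$.

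\textbf{Main obstacle.} Unlike the cubical Stein--Farley complexes of Subsection~\ref{ssec:stein_farley}, the complex $\Stein_Z$ is not upward-locally finite --- the splittings $\wedge(a^k,1)$, $k\in\Z$, give infinitely many vertices directly above any given one --- so $\Stab(P)$ is typically of infinite index in the vertex stabilizer, and one cannot simply invoke a finite-index argument. The real work is the second step: making the action of $\Stab([\tau]_Z)$ on the upward star explicit enough via the braided wreath recursions to recognize $\Stab(P)$ as the displayed extension. Everything else is bookkeeping with the groupoid $\Groupoid$ and the equivalence $\sim_Z$.
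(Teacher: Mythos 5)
Your proposal is correct and follows essentially the same route as the paper: both identify the polysimplex stabilizer, up to finite index, as an extension of a finite-index subgroup of $B_m$ by a finitely generated subgroup of $Z^m\cong\Z^{2m}$, and your explicit computation of the action on the splittings (shifting $\wedge(a^k,1)$ by $p_i-q_i$, fixing $1$ and $\wedge^2$ modulo $\sim_Z$) matches the calculations the paper performs in the cocompactness lemma. The only difference is one of economy: rather than determining $\Stab(P)$ exactly via the action on the upward star, the paper passes to the pointwise stabilizer $\Stab^0(P)$ (of finite index in $\Stab(P)$), observes directly that pure braids with trivial labels fix every vertex of $P$, and concludes that the image of $\Stab^0(P)$ in $B_m$ contains $PB_m$ and hence has finite index, while the kernel is some subgroup of $Z^m$ and hence free abelian of finite rank.
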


\begin{proof}
First note that the stabilizer of any vertex in $\Stein_Z$ with $h$ value $m$ is isomorphic to $B_m\wr Z$, by the same argument as in Subsection~\ref{ssec:stein_farley} for $\Stein_d(G_*)$. Since $\Stein_Z$ is not upward-locally finite (thanks to the weakly elementary splittings $\wedge(a^k,1)$ for arbitrary $k\in\Z$), a polysimplex stabilizer need not have finite index in a vertex stabilizer, so we need to be more careful now. Let $P=\poly([\tau];S_1,\dots,S_m)$ be a polysimplex. Write $\Stab_{\brR}^0(P)$ for the pointwise stabilizer of $P$, so $\Stab_{\brR}^0(P)$ is a finite index subgroup of $\Stab_{\brR}(P)$. Since the bottom vertex $[\tau]_Z$ is the only vertex of $P$ with $m$ feet, any element stabilizing $P$ must fix $[\tau]_Z$. Thus we have inclusions $\Stab_{\brR}^0(P)\le \Stab_{\brR}(P)\le \Stab_{\brR}([\tau]_Z)$.

Note that $B_m\wr Z\cong \Stab_{\brR}([\tau]_Z)$, and analogously to the argument in Subsection~\ref{ssec:stein_farley}, this isomorphism can be realized as
\[
\eta\colon \beta(z_1,\dots,z_m)\mapsto [\tau][1,\beta(z_1,\dots,z_m),1][\tau]^{-1}\text{.}
\]
If $\beta$ is pure, then for any $\splitting_i\in S_i$ we have $[1,\beta,1](\splitting_1\oplus\cdots\oplus\splitting_m)_Z = (\splitting_1\oplus\cdots\oplus\splitting_m)_Z$. Hence the restriction of $\eta$ to $PB_m$ lands in $\Stab_{\brR}^0(P)$. The standard projection $B_m\wr Z \to B_m$ induces, via $\eta$, an epimorphism $\theta\colon \Stab_{\brR}([\tau]_Z)\to B_m$. We have shown that the image of $\Stab_{\brR}^0(P)$ under $\theta$ contains $PB_m$, and so has finite index in $B_m$. The kernel of $\theta$ is isomorphic to $Z^m$. Since any finite index subgroup of $B_m$ is of type $\F_\infty$, and since any subgroup of $Z^m$ is finitely generated free abelian, hence type $\F_\infty$, we conclude that $\Stab_{\brR}^0(P)$ is of type $\F_\infty$, being an extension of a type $\F_\infty$ group by a type $\F_\infty$ group. Since $\Stab_{\brR}^0(P)$ has finite index in $\Stab_{\brR}(P)$, we are done.
\end{proof}

The failure of $\Stein_Z$ to be upward-locally finite not only makes the stabilizer argument harder, but also makes the cocompactness argument harder. Let $\Stein_Z^{h\le m}$ be the full subcomplex of $\Stein_Z$ spanned by all vertices with at most $m$ feet.

\begin{lemma}\label{lem:br_ro_cocpt}
For each $m\in\N$, the sublevel complex $\Stein_Z^{h\le m}$ is cocompact under the action of $\brR$.
\end{lemma}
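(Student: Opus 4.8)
The plan is to show that, up to the action of $\brR$, there are only finitely many polysimplices in $\Stein_Z^{h\le m}$. Since $\brR$ acts transitively on vertices with a fixed number of feet $k$ (for any $k\le m$, by the same argument used in Subsection~\ref{ssec:stein_farley}: given two such vertices $[T_-,g,F_+]_Z$ and $[U_-,h,E_+]_Z$, the groupoid product $[U_-,h,E_+][F_+,g^{-1},T_-]$ lies in $\brR$ and carries one to the other), it suffices to bound, for each $k\le m$ and each vertex $v$ with $h(v)=k$, the number of $\brR$-orbits of polysimplices having $v$ as their bottom vertex and lying in $\Stein_Z^{h\le m}$. By the definition of a basic polysimplex $\poly([\tau];S_1,\dots,S_k)$, its top vertex has $h$-value $k+\sum_{i}(c_i)$ where $c_i$ is the number of carets introduced by a maximal-depth splitting in $S_i$; since each $S_i$ contributes at most $2$ to the caret count (the option $\{1,\wedge(a^k,1),\wedge^2\}$ has maximal splitting $\wedge^2$ with $2$ leaves, i.e.\ one caret beyond trivial in the binary setting — more carefully, $\wedge^2$ adds two carets), the constraint $h\le m$ forces all but boundedly many $S_i$ to equal $\{1\}$, and the non-$\{1\}$ ones to be among a bounded list of shapes. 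Thus the only unboundedness comes from the parameter $k\in\Z$ appearing in $\wedge(a^k,1)$.

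The heart of the argument is therefore to absorb this $\Z$-worth of ambiguity using the vertex stabilizer. Fix a vertex $v=[\tau]_Z$ with $h(v)=k$; by the previous paragraph we may assume $[\tau]=[T,1,1]$ for some binary tree $T$ with $k$ leaves. Its stabilizer in $\brR$ is $\Stab_{\brR}([\tau]_Z)\cong B_k\wr Z$, realized via $\eta\colon\beta(z_1,\dots,z_k)\mapsto[\tau][1,\beta(z_1,\dots,z_k),1][\tau]^{-1}$ as in Lemma~\ref{lem:br_ro_stabs}. I claim this stabilizer acts on the set of basic polysimplices with bottom vertex $v$, and that it acts on the ``$k$-parameters'' of the $\wedge(a^k,1)$ slots with only finitely many orbits. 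Concretely, applying $\eta(1\oplus\cdots\oplus\zeta\oplus\cdots\oplus 1)$ — a generator of $B_2\cong\Z$ placed so as to act on the two children below the $i$th foot — and using the relation $\wedge(a^k,1) = [\wedge,(a^k,1),1]$ together with $[1,\zeta,1][\wedge,(a^k,1),1] = [\wedge,(a^{k+1},1),1]\cdot(\text{something in }B_2\wr Z)$ coming from the braided wreath recursion $a=\zeta(1,1)$, we see that the stabilizer translates the exponent $k$ by arbitrary integers while preserving the set of other $S_j$. Hence each family $\{\poly([\tau];S_1,\dots,S_k) : k\in\Z \text{ in the }i\text{th slot}\}$ collapses to a single $\brR$-orbit. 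Combining this with the boundedness of the number of non-trivial slots and of their possible shapes, we conclude that $\Stein_Z^{h\le m}$ has finitely many $\brR$-orbits of polysimplices of each dimension, and finitely many dimensions occur, so the action is cocompact.

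The main obstacle I anticipate is making precise the claim that the vertex stabilizer ``shifts the $a^k$ parameter'' — this requires a careful bookkeeping of how $[1,\zeta,1]$ interacts with the splitting $[\wedge,(a^k,1),1]$ in the groupoid $\Groupoid$, keeping track of the residual $B_2\wr Z$ factor so that one stays within the $\sim_Z$ equivalence relation (this is exactly where the definition of $Z$ and the relation $\wedge(a^k,1)=[1,b^k,1][\wedge,(1,c^{-k}),1]$ noted after the definition of weakly elementary splittings gets used). Once that shift is established, the finiteness of orbits is essentially a counting argument analogous to the cocompactness step in Subsection~\ref{ssec:stein_farley}, modulo this one extra $\Z$-quotienting. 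I would also need to double-check that the non-expanding considerations do not introduce further orbit classes, but since non-expanding is an invariant of $\Groupoid$ and the shift above is realized by a genuine stabilizer element, the orbit count is unaffected.
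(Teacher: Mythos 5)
Your overall architecture matches the paper's: reduce to showing that for each vertex $[\tau]_Z$ there are finitely many $\Stab_{\brR}([\tau]_Z)$-orbits of polysimplices with bottom vertex $[\tau]_Z$, observe that the only source of infinitude is the exponent in the splittings $\wedge(a^k,1)$, and kill that $\Z$-parameter using the vertex stabilizer. However, the concrete mechanism you propose for that last step fails. The element $\eta(1\oplus\cdots\oplus\zeta\oplus\cdots\oplus 1)$ with $\zeta$ ``acting on the two children below the $i$th foot'' amounts to putting the label $a=\zeta(1,1)$ in the $i$th slot of $B_k\wr\brGrig$, and $a\notin Z$; so this element does not lie in $\Stab_{\brR}([\tau]_Z)\cong B_k\wr Z$ and in fact moves the bottom vertex, which disqualifies it from being used to identify orbits of polysimplices over $[\tau]_Z$. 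Relatedly, the identity $[1,\zeta,1][\wedge,(a^k,1),1]=[\wedge,(a^{k+1},1),1]\cdot(\text{something in }B_2\wr Z)$ is false: the left-hand side (interpreting $[1,\zeta,1]$ as $[1,a,1]=[\wedge,\zeta(1,1),\wedge]$) equals $[\wedge,\zeta(a^k,1),1]$, and writing $\zeta(a^k,1)=(a^j,1)\gamma$ forces $\gamma=\zeta(a^k,a^{-j})$, which lies in $B_2\wr Z$ only when $j=k=0$.

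The correct mechanism is exactly the relation you relegate to your ``obstacle'' paragraph. The shifting element is the label $b^{-k}\in Z$ placed in the $i$th slot, which genuinely lies in the stabilizer; its braided wreath recursion $b^{-k}=(a^{-k},c^{-k})$ gives $[1,b^{-k},1][\wedge,(a^k,1),1]_Z=[\wedge,(a^{-k},c^{-k})(a^k,1),1]_Z=[\wedge,(1,c^{-k}),1]_Z=[\wedge,(1,1),1]_Z$ since $c^{-k}\in Z$. One must also check that this same element fixes $\wedge^2_Z$ — needed when $S_i=\{1,\wedge(a^k,1),\wedge^2\}$, so that the whole slot and not just one of its splittings is normalized — which holds because the second-level recursion of $b^{-k}$ has braid part $\zeta^{-k}$ supported inside the caret and labels in $Z$, whence $[1,b^{-k},1][\wedge^2,1,1]_Z=[\wedge^2,1,1]_Z$. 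Your proposal omits this compatibility entirely. With the $b^{-k}$ computation in place, the count by ``signature'' (four possibilities per slot) finishes the proof as you outline.
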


\begin{proof}
By the same argument as in Subsection~\ref{ssec:stein_farley} for $\Stein_d(G_*)$, the action is transitive on vertices with a given number of feet, so in $\Stein_Z^{h\le m}$ there are finitely many (in fact exactly $m$) vertex orbits. However, since we no longer have upward-local finiteness, we need to do more work to see that there are finitely many polysimplex orbits. It suffices to prove that for any vertex $[\tau]_Z$, there are finitely many $\Stab_{\brR}([\tau]_Z)$-orbits of polysimplices with $[\tau]_Z$ as their bottom vertex. Let $P=\poly([\tau];S_1,\dots,S_m)$ be a polysimplex with $[\tau]_Z$ as its bottom vertex. As in the proof of Lemma~\ref{lem:br_ro_stabs}, we have $B_m\wr Z\cong \Stab_{\brR}([\tau]_Z)$ via the isomorphism
\[
\psi\colon \beta(z_1,\dots,z_m)\mapsto [\tau][1,\beta(z_1,\dots,z_m),1][\tau]^{-1}\text{.}
\]
Note that for any $k\in\Z$ we have
\begin{align*}
[1,b^{-k},1][\wedge,(a^k,1),1]_Z & =[\wedge,(a^{-k},c^{-k}),\wedge][\wedge,(a^k,1),1]_Z\\
&=[\wedge,(a^{-k},c^{-k})(a^k,1),1]_Z\\
&=[\wedge,(1,c^{-k}),1]_Z\\
&=[\wedge,(1,1),1]_Z\text{.}
\end{align*}
Moreover,
\begin{align*}
[1,b^{-k},1][\wedge^2,1,1]_Z &=[\wedge^2,(\zeta^{-k}(1,1),c^{-k}),\wedge^2][\wedge^2,1,1]_Z\\
&=[\wedge^2,(\zeta^{-k}(1,1),c^{-k}),1]_Z\\
&=[\wedge^2,1,1]_Z\text{.}
\end{align*}
(Recall that $\zeta$ is our chosen generator of $B_2\cong\Z$, with $a=\zeta(1,1)$.) Define the \emph{signature} $||S_i||$ of $S_i$ to be $1$ if $S_i=\{1\}$, $2$ if $S_i=\{1,\wedge(a^k,1)\}$ for some $k\in\Z$, $3$ if $S_i=\{1,\wedge^2\}$, and $4$ if $S_i=\{1,\wedge(a^k,1),\wedge^2\}$ for some $k\in\Z$. Extrapolating using direct sums, the above calculations show that the $\Stab_{\brR}([\tau]_Z)$-orbit of $P$ includes every $\poly([\tau];T_1,\dots,T_m)$ with $||T_i||=||S_i||$ for each $1\le i\le m$. Since there are only finitely many (four) possibilities for each $||S_i||$, we are done.
\end{proof}

Combining Proposition~\ref{prop:roever_Stein_cible}, Lemma~\ref{lem:br_ro_stabs}, and Lemma~\ref{lem:br_ro_cocpt}, we get the following:

\begin{corollary}\label{cor:br_ro_action}
The group $\brR$ acts on the contractible complex $\Stein_Z$ with cell stabilizers of type $\F_\infty$, and with cocompact sublevel complexes $\Stein_Z^{h\le m}$.\qed
\end{corollary}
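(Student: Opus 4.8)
The plan is simply to assemble the three results just established, so this will be a short argument. First I would record that $\brR=\brV_2(\brGrig)$ acts on $\Stein_Z$ by left multiplication. This is well defined for exactly the reason seen in Subsection~\ref{ssec:stein_farley} for $\Stein_d(G_*)$: the equivalence relation $\sim_Z$, the partial order on $\Poset_Z$, the direct sum operation, and hence the property of a (poly)simplex being weakly elementary, are all phrased purely in terms of right multiplication by groupoid elements and of direct sums, which commute with multiplication on the left. The ``number of feet'' function $h$ is invariant under this action, so each sublevel complex $\Stein_Z^{h\le m}$ is an $\brR$-invariant subcomplex on which $\brR$ acts.

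Given the action, contractibility of $\Stein_Z$ is precisely Proposition~\ref{prop:roever_Stein_cible}. The statement that the stabilizer of every cell --- that is, of every polysimplex --- is of type $\F_\infty$ is precisely Lemma~\ref{lem:br_ro_stabs}; note that this already subsumes the vertex stabilizers, which are the copies of $B_m\wr Z$. Cocompactness of each $\Stein_Z^{h\le m}$ under the $\brR$-action is precisely Lemma~\ref{lem:br_ro_cocpt}. Combining these three facts gives the corollary, and nothing further is needed.

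I do not expect any real obstacle internal to this statement: the genuinely new difficulties --- namely the failure of $\Stein_Z$ to be upward-locally finite, which forces the more delicate stabilizer and cocompactness arguments, together with the conical-contractibility argument underlying Proposition~\ref{prop:roever_Stein_cible} --- have already been handled inside the three ingredients. The role of the corollary is to feed into an application of Brown's Criterion together with discrete Morse theory, in the form of Citation~\ref{cit:brown_morse}; the substantial remaining work toward Theorem~\ref{thrm:braided_roever_F_infty} will be to prove that the descending links of vertices of $\Stein_Z$ with respect to $h$ are highly connected, which is where the $(2,5/2)$-disk complex is expected to enter.
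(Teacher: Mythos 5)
Your proposal is correct and matches the paper exactly: the corollary is stated with a \qed precisely because it is the immediate combination of Proposition~\ref{prop:roever_Stein_cible}, Lemma~\ref{lem:br_ro_stabs}, and Lemma~\ref{lem:br_ro_cocpt}, with the left-multiplication action and $h$-invariance carrying over from Subsection~\ref{ssec:stein_farley}. Nothing further is needed.
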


\subsection{Descending links}\label{ssec:br_ro_dlks}

By Citation~\ref{cit:brown_morse} and Corollary~\ref{cor:br_ro_action}, to get $\brR$ to be of type $\F_\infty$ it remains to prove that descending links get arbitrarily highly connected. (Note that $h$ is affine on polysimplices, so it really is a Morse function.) Much of the work in this subsection will be inspired by \cite[Section~6]{belk16} and \cite[Subsection~4.3]{skipper21}.

Since $\Stein_Z$ is a polysimplicial complex, the link of a vertex $v$ is a simplicial complex, with a $k$-simplex for each $(k+1)$-dimensional polysimplex containing $v$. If the polysimplex achieves its maximum $h$ value at $v$, then the corresponding simplex in the link of $v$ is even in the descending link. Note that vertices with the same number of feet have isomorphic descending links, so just like the complexes $\dlmodel_d^m(B_*\wr G)$ from Subsection~\ref{ssec:braid_dlks}, we can denote by $\dlmodel^m_Z$ the simplicial complex that is isomorphic to the descending link of any vertex with $m$ feet.

\begin{definition}[Merging]
Call an element $\merging$ of $\Groupoid$ a \emph{merging} if $\merging^{-1}$ is a splitting. Call it \emph{simple} or \emph{weakly elementary} if its inverse is, as a splitting. If $v=[\tau]_Z$ is a vertex in $\Stein_Z$ and $\merging$ is a merging such that the product $[\tau]\merging$ makes sense, then we call $[\tau]\merging_Z$ a \emph{merging of $v$}.
\end{definition}

For $\merging$ a merging, write $\merging^{(i)}$ for the merging $1\oplus\cdots\oplus 1\oplus \merging\oplus 1\oplus\cdots\oplus 1$, where $\merging$ is the $i$th direct summand. We see that the vertices of the descending link $\dlk v$ of $v$ are precisely the non-trivial, weakly elementary, simple mergings of $v$. Writing $\vee\defeq\wedge^{-1}$, this means that to obtain a vertex of $\dlk v$, we choose a representative $[\tau]$ of $v=[\tau]_Z$, take $\merging$ to be either $(a^k,1)\vee$ for some $k\in\Z$ or $\vee^2$, and then take $[\tau]\merging^{(i)}_Z$ for some $i$. If we already have a fixed representative $[\tau]$ of $v$, then in addition to all of the above we first multiply $[\tau]$ on the right by an element of $B_m\wr Z$, where $m=h(v)$ (to account for potentially changing representatives). In all that follows, we will often identify $B_m\wr Z$ with its image in $\Groupoid$ under $g\mapsto [1,g,1]$.

We can denote vertices of $\dlmodel^m_Z$ by $g\merging^{(i)}_Z$ for $g\in B_m\wr Z$, $\merging$ equal to either $(a^k,1)\vee$ for some $k\in\Z$ or $\vee^2$, and $i$ some appropriate number. If $\dlmodel^m_Z$ is representing $\dlk [\tau]_Z$, then $g\merging^{(i)}_Z$ represents $[\tau]g\merging^{(i)}_Z$.

\begin{definition}[Type, mass]
Let $[\tau]_Z$ be a vertex of $\Stein_Z$ with $h([\tau]_Z)=m$. Consider a vertex of $\dlk [\tau]_Z$, which is a non-trivial, weakly elementary, simple merging of $[\tau]_Z$, so one of the form $[\tau]g\merging^{(i)}_Z$, for $g=\beta(z_1,\dots,z_m)\in B_m\wr Z$, $\merging$ of the form $(a^k,1)\vee$ for some $k\in\Z$ or $\vee^2$, and some $i$. Define the \emph{type} of this merging to be $\merging$, so the type is either $(a^k,1)\vee$ for some $k\in\Z$ or $\vee^2$. Define the \emph{mass} of this merging to be $2$ if it is of type $(a^k,1)\vee$ for some $k\in\Z$, and $3$ if it is of type $\vee^2$ (so the mass is the ``number of feet'' that $\merging$ is merging together).
\end{definition}

In order to understand $\dlmodel^m_Z$, we will map it to a certain complex of disks, defined as follows.

\begin{definition}[$(2,5/2)$-disk complex]\label{def:252}
Let $\surface_{b,m}^g$ be a surface with $b$ boundary components, $m$ marked points, and genus $g$. The \emph{$(2,5/2)$-marked-point-disk complex} $\diskcpx_{2,5/2}(\surface_{b,m}^g)$ is the simplicial complex defined as follows. A vertex is an isotopy class of an embedded disk enclosing precisely $2$ marked points in its interior and either $0$ or $1$ marked points in its boundary. If a disk has $0$ marked points in its boundary call it a \emph{$2$-disk} and if it has $1$ marked point in its boundary call it a \emph{$5/2$-disk} (two points inside plus one point ``halfway inside'' equals two and a half points inside, hence the name $5/2$-disk). A $k$-simplex in $\diskcpx_{2,5/2}(\surface_{b,m}^g)$ is an isotopy class of $k+1$ (pairwise non-isotopic) embedded disks such that each one is either a $2$-disk or a $5/2$-disk, and any two of them are either disjoint or nested. The face relation is given by taking subsets.
\end{definition}

Here when we call two isotopy classes ``disjoint'' or ``nested'' we of course mean that they admit disjoint or nested representatives. If one disk is nested in a (non-isotopic) second disk then necessarily the former is a $2$-disk and the latter is a $5/2$-disk. Similar to how the usual disk complex can be viewed as a subcomplex of the curve complex (outside some pathological cases), this $(2,5/2)$-marked-point-disk complex can usually be viewed as a subcomplex of the arc-and-curve complex: the boundary of a $2$-disk is a curve and the boundary of a $5/2$-disk is an arc. See Figure~\ref{fig:nested_disks} for an example. (The astute reader may wonder why we are using $5/2$-disks instead of $3$-disks; the answer is that the analog of Proposition~\ref{prop:br_ro_complete_join} would not hold if we used $3$-disks.)

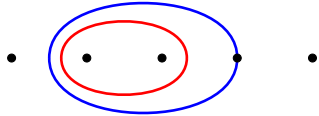
\begin{figure}[htb]
\centering
\begin{tikzpicture}
  \draw[red,line width=1pt]
  (0.66,0) to[out=90,in=90] (2.33,0)
  (0.66,0) to[out=-90,in=-90] (2.33,0);
  
  \draw[blue,line width=1pt]
  (0.5,0) to[out=90,in=90] (3,0)
  (0.5,0) to[out=-90,in=-90] (3,0);
  
  \filldraw
   (0,0) circle (1.5pt)
   (1,0) circle (1.5pt)
   (2,0) circle (1.5pt)
   (3,0) circle (1.5pt)
   (4,0) circle (1.5pt);
\end{tikzpicture}
\caption{An example of a $1$-simplex in $\diskcpx_{2,5/2}(\surface_{1,5}^0)$. The disk bounded in red is a $2$-disk, the disk bounded in blue is a $5/2$-disk, and they are nested.}
\label{fig:nested_disks}
\end{figure}

Now let us map $\dlmodel^m_Z$ to $\diskcpx_{2,5/2}(\surface_m)$, where $\surface_m=\surface_{1,m}^0$. To start, we will just define the map on the vertices. A vertex of $\dlmodel^m_Z$ is of the form $\beta(z_1,\dots,z_m)\merging^{(i)}_Z$, where $\beta(z_1,\dots,z_m)\in B_m\wr Z$, $\merging$ is of the form $(a^k,1)\vee$ for some $k\in\Z$ or $\vee^2$, and $i$ is some number. Define
\[
\mu\colon (\dlmodel^m_Z)^{(0)} \to \diskcpx_{2,5/2}(\surface_m)^{(0)}
\]
as follows. If $\merging=(a^k,1)\vee$ for some $k$ then define $\disk(\merging^{(i)})$ to be a $2$-disk that is a tubular neighborhood of the $2$-matching $\{i,i+1\}$ (recall from before that we fix an embedding of the linear graph into the surface, and so matchings make sense). If $\merging=\vee^2$ then define $\disk(\merging^{(i)})$ to be a $5/2$-disk that is obtained by taking a tubular neighborhood of the subset obtained from the $3$-matching $\{i,i+1,i+2\}$ by removing a small disk centered at $i+2$, in such a way that $i+2$ lies precisely on the boundary of the neighborhood. For example, in Figure~\ref{fig:nested_disks} if we label the marked points $1$ through $5$ from left to right, then (up to isotopy) the disk bounded in blue is $\disk\left((\vee^2)^{(2)}\right)$. Now define $\mu$ via
\[
\mu \colon \beta(z_1,\dots,z_m)\merging^{(i)}_Z \mapsto (\disk(\merging^{(i)}))\beta^{-1} \text{.}
\]

\begin{lemma}\label{lem:br_ro_well_def}
The map $\mu$ is well defined on vertices.
\end{lemma}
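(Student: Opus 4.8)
The statement to prove is Lemma~\ref{lem:br_ro_well_def}: the map $\mu$ on vertices of $\dlmodel^m_Z$ is well defined. The issue is that a vertex of $\dlmodel^m_Z$ is an equivalence class $\beta(z_1,\dots,z_m)\merging^{(i)}_Z$, where the subscript $Z$ means we are allowed to change the representative $\beta(z_1,\dots,z_m)$ by right-multiplying by elements of the relevant wreath product with $Z$, and also by passing through expansions/reductions as in the definition of $\Poset_Z$. So I need to check that if $\beta(z_1,\dots,z_m)\merging^{(i)}$ and $\beta'(z_1',\dots,z_{m'}')(\merging')^{(i')}$ represent the same vertex of $\dlmodel^m_Z$, then $(\disk(\merging^{(i)}))\beta^{-1} = (\disk((\merging')^{(i')}))(\beta')^{-1}$ as isotopy classes of disks in $\surface_m$.

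First I would isolate exactly which moves generate the equivalence. Two representations of the same vertex of $\dlmodel^m_Z$ differ by (a) right-multiplying the representative $[\tau]$ by some $g\in B_m\wr Z$ before applying the merging, and (b) the fact that the merging $\merging^{(i)}$ is only determined up to $\sim_Z$, i.e.\ up to direct-summing with trivial strands and up to post-composing inside the merged part by elements of the appropriate wreath-with-$Z$ group. Case (a) is the main point: if we replace $\beta(z_1,\dots,z_m)$ by $\beta(z_1,\dots,z_m)\cdot\gamma(w_1,\dots,w_m)$ with $\gamma(w_1,\dots,w_m)\in B_m\wr Z$, then after applying $\merging^{(i)}$ and simplifying (pushing the $\gamma$ and the $w_j$'s through the merging exactly as in the well-definedness proof for $\mu$ in Lemma~\ref{lem:well_def} and in the cocompactness computation of Lemma~\ref{lem:br_ro_cocpt}), the new braid is $\beta\gamma'$ for some $\gamma'$, and the disk gets acted on by $(\gamma')^{-1}$. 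The content to verify is that $(\disk(\merging^{(i)}))(\gamma')^{-1}$ is isotopic to $\disk((\merging')^{(i')})$, where $\merging'$ is the new merging. I would argue this just as in Lemma~\ref{lem:well_def}: $\gamma'$ is obtained from $\gamma$ by keeping most strands parallel and only performing some local braiding/twisting supported inside (a neighborhood of) the merged points, together possibly with relabeling from the $S_2$-action. Since the local modifications are supported in the interior of the disk being created (the $2$-disk or the $5/2$-disk), they do not change the isotopy class of that disk in $\surface_m$; and the "pushing through a merged strand" move corresponds to the intermediate braid $\gamma''$ for which $(\disk(\merging^{(i)}))\gamma'' = \disk((\merging')^{(i')})$ visibly holds.

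The new wrinkle compared to Lemma~\ref{lem:well_def} is the presence of the $a^k$ inside the type $(a^k,1)\vee$ and the $\zeta^{-k}$ appearing when one multiplies $b^{-k}$ past $\wedge^2$ (as computed in Lemma~\ref{lem:br_ro_cocpt}). These are braid elements supported on the two (resp.\ two-of-the-three) strands being merged, so they too are supported in the interior of $\disk(\merging^{(i)})$ — in particular they do not move the boundary of the $2$-disk, and for the $5/2$-disk the relevant $\zeta^{-k}(1,1)$ is supported on the two strands strictly interior to the disk, leaving the third (boundary) marked point fixed. Hence again the isotopy class of the resulting disk is unchanged. For case (b), direct-summing with trivial strands does not affect $\disk$, and post-composition inside the merged part is again interior to the disk; so these contribute nothing new. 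I would also note that the index $i$ together with the type $\merging$ is genuinely recorded by $\disk(\merging^{(i)})$ — a $2$-disk versus a $5/2$-disk is distinguished by whether it has a marked point on its boundary, and the index by which marked points it encloses — so two mergings of different type or different position are sent to different (non-isotopic) disks, which is consistent.

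The main obstacle is the bookkeeping in case (a): one must carefully track how $\gamma(w_1,\dots,w_m)$ transforms under the merging $\merging^{(i)}$ into a braid $\gamma'$ plus interior twists, reproducing the analysis from the cloning-system definition (Subsection~\ref{ssec:braid_aaut}) and from Lemmas~\ref{lem:well_def} and~\ref{lem:br_ro_cocpt}, and then verifying that every piece that is \emph{not} a genuine global braid is supported in the interior of $\disk(\merging^{(i)})$ (for the $5/2$-disk, strictly away from the boundary marked point). Once that support claim is in hand, the conclusion that the isotopy class of the disk system is unchanged is immediate, and the lemma follows. I would keep the write-up short by explicitly pointing to Lemma~\ref{lem:well_def} for the structurally identical steps and only spelling out the new contributions from $a^k$ and $\zeta^{-k}$.
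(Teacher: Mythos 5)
Your proposal is correct and follows essentially the same route as the paper's proof: push the change of representative through the merging, note that the only strand labels escaping $Z$ are powers of $a$ that get absorbed into the type $(a^k,1)\vee$, and check that the residual braiding (the $\zeta^{\pm k}$ on the two interior strands of a $5/2$-disk) is supported in the interior of the disk and hence does not change its isotopy class, deferring to Lemma~\ref{lem:well_def} for the structurally identical steps. One small framing correction: the equivalence on representatives is generated by right multiplication by $B_q\wr Z$ \emph{after} the merging, which when pushed leftward becomes a pre-composition by an element of $B_m\wr\brGrig$ (not of $B_m\wr Z$ --- pre-composing by an arbitrary element of $B_m\wr Z$ does not in general preserve the vertex), but this is exactly the computation you go on to describe, so the substance of your argument is unaffected.
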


\begin{proof}
The proof is similar to that of Lemma~\ref{lem:well_def}. Let $\gamma(y_1,\dots,y_q)\in B_q\wr Z$ for appropriate $q$, so
\[
\beta(z_1,\dots,z_m)\merging^{(i)}_Z = \beta(z_1,\dots,z_m)\merging^{(i)}\gamma(y_1,\dots,y_q)_Z \text{.}
\]
Let $\gamma'(y_1',\dots,y_m')\in B_m\wr \brGrig$ and $i'$ be such that
\[
\merging^{(i)}\gamma(y_1,\dots,y_q) = \gamma'(y_1',\dots,y_m')\merging^{(i')} \text{.}
\]
Note that the $y_j'$ might not lie in $Z$, but the only way this could happen is when $j=i'$, and then only if $\merging$ has mass $2$ so $y_{i'}'$ is a power of $a$. In this case, we can let $\merging'$ be $\merging=(a^k,1)\vee$ with $k$ changed appropriately, and (changing the definition of $y_{i'}'$) get
\[
\merging^{(i)}\gamma(y_1,\dots,y_q) = \gamma'(y_1',\dots,y_m')(\merging')^{(i')}
\]
with all $y_j'$ in $Z$. Now we have
\[
\beta(z_1,\dots,z_m)\merging^{(i)}\gamma(y_1,\dots,y_q)_Z = \beta\gamma'(z_1',\dots,z_m')(y_1',\dots,y_m')(\merging')^{(i')}
\]
for some $z_i'\in Z$. Since all the $z_j$ and $y_j'$ are in $Z$, this maps under $\mu$ to $(\disk((\merging')^{(i')}))(\beta\gamma')^{-1}$. We need to show this equals $(\disk(\merging^{(i)}))\beta^{-1}$, or equivalently that $(\disk(\merging^{(i)}))\gamma' = \disk((\merging')^{(i')})$. If $\merging$ (and hence $\merging'$) has mass $2$ then this follows by the same argument as in the proof of Lemma~\ref{lem:well_def}. Now suppose $\merging=\merging'=\vee^2$. Then $\gamma'$ is obtained from $\gamma$ by replacing the $i$th strand (counting from the top) with $3$ parallel strands -- call this intermediate braid $\gamma''$ -- and possibly braiding the first two of these around each other some number of times (depending on the braided wreath recursion of $y_{i'}$, which has some power of $a$ in its left entry and some element of $Z$ in its right entry). When acting on $\disk(\merging^{(i)})$, this braiding of the first $2$ strands does not affect the (equivalence class of the) $5/2$-disk, so $(\disk(\merging^{(i)}))\gamma' = (\disk(\merging^{(i)}))\gamma''$. It is clear that $(\disk(\merging^{(i)}))\gamma'' = \disk((\merging')^{(i')})$, so we are done.
\end{proof}

Now we want to extend $\mu$ to all simplices of $\dlmodel^m_Z$. We need to show that if vertices $w_0,\dots,w_k$ span a simplex in $\dlmodel^m_Z$ then their images under $\mu$ span a simplex in $\diskcpx_{2,5/2}(\surface_m)$. Since $\diskcpx_{2,5/2}(\surface_m)$ is a flag complex by construction, it suffices to do this for $k=1$, i.e., to show the following:

\begin{lemma}[Extends to simplices]\label{lem:span_edge}
If two vertices $w$ and $u$ span an edge in $\dlmodel^m_Z$ then their images under $\mu$ span an edge in $\diskcpx_{2,5/2}(\surface_m)$.
\end{lemma}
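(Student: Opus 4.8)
The plan is to read an edge of $\dlmodel^m_Z$ off the polysimplicial structure of $\Stein_Z$. Such an edge corresponds to a $2$-dimensional basic polysimplex $\poly([\tau];S_1,\dots,S_q)$ whose vertex of maximal $h$-value (the one with the most feet) is the basepoint $v$ of the descending link, and whose two other directions at $v$ are represented by $w$ and $u$. Since the polysimplex is $2$-dimensional we have $\sum_i(|S_i|-1)=2$, so exactly one of the following holds: two of the $S_i$ have signature $2$; one has signature $2$ and one signature $3$; two have signature $3$; or a single $S_i$ has signature $4$. In each case I will describe the two mergings $w,u$ of $v$ explicitly, compute $\mu(w)$ and $\mu(u)$, and check they are disjoint or nested. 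Throughout I will use that $\mu$ is well defined (Lemma~\ref{lem:br_ro_well_def}), so the images may be computed from a convenient common representative of $v$, and that a single mapping class carries disjoint disks to disjoint disks and nested disks to nested disks.

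\textbf{The cases.} In the first three cases the two non-trivial factors sit at distinct feet of $[\tau]$, so $w$ and $u$ are simple mergings supported over disjoint blocks of marked points; by the formula for $\mu$ on vertices their images, computed from a common representative of $v$, are obtained by transporting along one homeomorphism the tubular neighborhoods of two disjoint matchings (in the signature-$3$ positions with the last marked point pushed onto the boundary), hence are disjoint, and a pair of disjoint non-isotopic disks spans an edge of $\diskcpx_{2,5/2}(\surface_m)$. The genuinely new case is a single factor $S_i=\{1,\wedge(a^k,1),\wedge^2\}$ of signature $4$: here $v$ is obtained from $[\tau]$ by a $\wedge^2$-split of its $i$th foot, and since $\wedge^2$ is $\wedge$ with a caret on its \emph{left} leaf, the first two of the three new feet are siblings, so the two directions at $v$ are forced to be the mass-$3$ merging undoing the whole split and the mass-$2$ merging undoing only the inner caret. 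These map under $\mu$ respectively to a $5/2$-disk enclosing the three marked points with the third on its boundary, and a $2$-disk enclosing the first two of them; the $2$-disk is nested inside the $5/2$-disk, so $w$ and $u$ again span an edge. Finally, in every case $\mu(w)\neq\mu(u)$, since disjoint disks around distinct marked points are non-isotopic and a $2$-disk is never isotopic to a $5/2$-disk.

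\textbf{Main obstacle.} I expect the signature-$4$ case to be where the work lies, for two reasons. First, one must verify that among all mass-$2$ mergings of $v$ the only one that can co-occur with the given mass-$3$ merging in a $2$-polysimplex is the one picking out the children of the inner internal vertex; this is exactly where the left/right asymmetry of $\wedge^2$ is used, and it is what forces the $2$-disk to sit \emph{inside} the $5/2$-disk rather than to cross its boundary marked point. (This is also why we use $5/2$-disks rather than disks enclosing all three points in their interior: the nesting would still hold here, but the fibers of $\mu$ would then fail to be joins, breaking Proposition~\ref{prop:br_ro_complete_join}.) Second, one must track the braids introduced by changing $Z$-representatives or by adjusting the exponent $k$; as in the proof of Lemma~\ref{lem:br_ro_well_def}, each such braid is supported in the interior of one of the disks involved (a $\phi(g_i)\in B_2$ living inside the $2$-disk, which itself lies inside the $5/2$-disk), so it alters neither disk and the whole discussion is independent of the chosen representatives.
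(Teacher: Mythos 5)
Your proposal is correct and follows essentially the same route as the paper: the paper also reads the edge off a $2$-dimensional polysimplex containing $v$, $w$, $u$, splits into the square case (two size-$2$ factors, giving disjoint disks transported by a common braid) and the triangle case (one signature-$4$ factor, where the mass-$2$ merging is forced to undo the inner caret of $\wedge^2$ and its $2$-disk is nested in the $5/2$-disk of the mass-$3$ merging), and likewise observes that the stray $\zeta^k$ and representative-changing braids are supported inside the relevant disks and hence harmless. Your finer subdivision of the square case by signature and the explicit remark that $\mu(w)\neq\mu(u)$ are harmless refinements of the same argument.
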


\begin{proof}
View $\dlmodel^m_Z$ as $\dlk v$ for some vertex $v=[\tau]_Z$ of $\Stein_Z$ with $h(v)=m$, so $w$ and $u$ are adjacent vertices to $v$ with $h(w),h(u)<m$. The fact that $w$ and $u$ span an edge in $\dlk v$ means that there is a $2$-dimensional polysimplex, i.e., a square or a triangle, in $\Stein_Z$ containing $v$, $w$, and $u$. First suppose it is a square. Then there is a merging of $v$ of the form $[\tau]\beta(z_1,\dots,z_m)\merging_Z$ for $\beta(z_1,\dots,z_m)\in B_m\wr Z$ and
\[
\merging=1\oplus\cdots\oplus 1\oplus \merging_1\oplus 1\oplus\cdots\oplus 1\oplus \merging_2\oplus 1 \text{,}
\]
say with $\merging_1$ in the $i_1$th spot and $\merging_2$ in the $i_2$th spot, such that the following holds: If $\altmerging_i$ is $\merging$ with $\merging_i$ replaced by an appropriate number of copies of $1$, then $w=[\tau]\beta(z_1,\dots,z_m)(\altmerging_1)_Z$ and $u=[\tau]\beta(z_1,\dots,z_m)(\altmerging_2)_Z$. Clearly $\disk(\altmerging_1)$ and $\disk(\altmerging_2)$ are disjoint, hence so are $\mu(w)=(\disk(\altmerging_1))\beta^{-1}$ and $\mu(u)=(\disk(\altmerging_2))\beta^{-1}$. Now suppose the $2$-dimensional polysimplex containing $v$, $w$, and $u$ is a triangle. This means there we can choose $[\tau']$, $k$, and $i$ such that $v$, $w$, and $u$ are the vertices of the polysimplex $\poly([\tau'];S_1,\dots,S_{m-2})$, where $S_j=\{1\}$ for all $j\ne i$ and $S_i=\{1,\wedge(a^k,1),\wedge^2\}$. Say $w$ has mass $3$ relative to $v$ (and $u$ has mass $2$), so $w=[\tau']_Z$, $u=[\tau'](\wedge(a^k,1))^{(i)}_Z$, and $v=[\tau'](\wedge^2)^{(i)}_Z$. Without loss of generality $[\tau]=[\tau'](\wedge^2)^{(i)}$, so $u=[\tau](\vee a^k)^{(i)}_Z=[\tau](\zeta^k \vee)^{(i)}_Z$ and $w=[\tau](\vee^2)^{(i)}_Z$. Now $\mu(u)=\left(\disk(\vee^{(i)})\right)\left((\zeta^k)^{(i)}\right)^{-1}=\disk(\vee^{(i)})$ and $\mu(w)=\disk\left((\vee^2)^{(i)}\right)$, and the former is nested in the latter as desired.
\end{proof}

At this point we have a simplicial map
\[
\mu\colon \dlmodel^m_Z \to \diskcpx_{2,5/2}(\surface_m) \text{.}
\]
Our next goal is to prove that it is a surjective complete join. It is clearly surjective on vertices, and is simplexwise injective, so it suffices to show the following:

\begin{proposition}\label{prop:br_ro_complete_join}
Let $w_0,\dots,w_k$ be vertices of $\dlmodel^m_Z$ whose images under $\mu$ span a $k$-simplex in $\diskcpx_{2,5/2}(\surface_m)$. Then $w_0,\dots,w_k$ span a $k$-simplex in $\dlmodel^m_Z$. Hence $\mu$ is a complete join.
\end{proposition}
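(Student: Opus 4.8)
The plan is to mimic the argument of Proposition~\ref{prop:dlk_conn}, but now the fibers of $\mu$ are larger and the combinatorics of $2$-disks versus $5/2$-disks has to be tracked carefully. I proceed by induction on $k$, the base case $k=0$ being vacuous. So suppose $k\ge 1$ and that $w_0,\dots,w_k$ are vertices of $\dlmodel^m_Z$ whose images $\mu(w_0),\dots,\mu(w_k)$ are pairwise disjoint-or-nested disks spanning a $k$-simplex in $\diskcpx_{2,5/2}(\surface_m)$. By the inductive hypothesis $w_1,\dots,w_k$ already span a $(k-1)$-simplex $\sigma$ of $\dlmodel^m_Z$. Using the left action of $B_m\wr Z$ on $\dlmodel^m_Z$ — which covers the right action of $B_m$ on $\diskcpx_{2,5/2}(\surface_m)$ under $\mu$ — I may assume without loss of generality that $\sigma$ is a ``standard'' simplex, i.e.\ represented by a direct sum of mergings $\merging_1^{(i_1)},\dots,\merging_k^{(i_k)}$ each of which is $(a^0,1)\vee=\vee$ or $\vee^2$ placed at consecutive positions, with no braiding, so that $\mu(\sigma)$ is the corresponding standard disk system $\disk_\sigma$. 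Write $w_0=g\,\merging^{(i)}_Z$ for some $g=\beta(z_1,\dots,z_m)\in B_m\wr Z$ and $\merging$ of type $\vee$ (after adjusting $k$) or $\vee^2$.

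The heart of the argument is to show that, because $\mu(w_0)=(\disk(\merging^{(i)}))\beta^{-1}$ is disjoint from or nested with each disk of $\disk_\sigma$, we can choose the representative $g$ so that $\beta$ is (pure, hence up to $\sim_Z$) trivial on the strands involved in $\sigma$, and so that the merging $\merging^{(i)}$ ``fits together'' with the standard splitting underlying $\sigma$ into a single weakly elementary simple merging. Concretely: if $\mu(w_0)$ is disjoint from all disks of $\disk_\sigma$, then it is isotopic to a standard $2$- or $5/2$-disk on positions disjoint from those used by $\sigma$, and combing $\beta$ straight (using that the local $Z$-parts act inside disks and do not move the disk system, exactly as in Lemma~\ref{lem:br_ro_well_def}) lets me replace $w_0$ by $\merging^{(i)}_Z$ with $i$ disjoint from the $\sigma$-positions; then $\merging^{(i)}\oplus(\text{the }\sigma\text{-splitting})$ is weakly elementary, giving the $k$-simplex. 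The remaining case is nesting: $\mu(w_0)$ is a $2$-disk nested inside some $5/2$-disk $\disk(\vee^2{}^{(i_j)})=\mu(w_j)$ of $\disk_\sigma$ (it cannot be nested the other way, and two $5/2$-disks or two $2$-disks cannot be nested). Here I must check that the only vertex of $\dlmodel^m_Z$ mapping to that nested $2$-disk, and compatible with already having $\vee^2{}^{(i_j)}$ in $\sigma$, is of the form $g'(\vee a^k)^{(i_j)}_Z=g'(\zeta^k\vee)^{(i_j)}_Z$ — i.e.\ exactly the ``mass-$2$ merging that completes the triangle'' described in the proof of Lemma~\ref{lem:span_edge}. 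This is where the choice of $5/2$-disks rather than $3$-disks matters, as the remark after Definition~\ref{def:252} warns: with $3$-disks the nested $2$-disk would not pin down which power of $a$ (equivalently which strand configuration) we have, and the fiber would fail to be a join. Granting this, $w_0$ together with $\sigma$ spans the triangle $\poly([\tau'];S_1,\dots,S_{m-2})$ with $S_{i}=\{1,\wedge(a^k,1),\wedge^2\}$ from Lemma~\ref{lem:span_edge}, hence a $k$-simplex of $\dlmodel^m_Z$.

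The main obstacle I anticipate is precisely the nesting case: verifying that a $2$-disk nested inside a fixed $5/2$-disk forces the corresponding merging of $v$ to be the specific $(a^k,1)\vee$ at the right position with only the braiding ambiguity absorbed by $\sim_Z$, and in particular that no ``extra'' braiding of $\beta$ survives between the inner two strands that would not already be killed by right-multiplication by $B_m\wr Z$. This amounts to a careful bookkeeping of which braidings are supported inside the $5/2$-disk (those involving only the two interior marked points, which are exactly the powers of $\zeta$ that get identified away) versus braidings that would move the disk itself. Once that lemma-level statement is isolated and proved, the rest is the routine join/induction packaging, and the simplexwise injectivity and surjectivity of $\mu$ were already observed. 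Combining Proposition~\ref{prop:br_ro_complete_join} with Citation~\ref{cit:join} and a connectivity estimate for $\diskcpx_{2,5/2}(\surface_m)$ (to be established next) will then feed into Citation~\ref{cit:brown_morse} via Corollary~\ref{cor:br_ro_action} to finish Theorem~\ref{thrm:braided_roever_F_infty}.
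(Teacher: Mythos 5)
Your outline is the paper's argument: induct on $k$, dispose of the case where $\mu(w_0)$ is disjoint from every disk of $\mu(\sigma)$ exactly as in Proposition~\ref{prop:dlk_conn}, and in the remaining case use that $\mu(w_0)$ is a $2$-disk nested in some $5/2$-disk $\mu(w_j)$ to show $w_0$ is the mass-$2$ merging completing a triangle with $w_j$. You have correctly located the entire content of the proposition in that nesting case. But you stop at ``granting this'' precisely where the proof has to happen, so as written there is a gap at the one nontrivial step. (A secondary point: your normalization of $\sigma$ to a direct sum of mergings at distinct positions is not quite general, since $\sigma$ may itself already contain a nested pair, i.e.\ two vertices supported at the same position; the paper instead normalizes $\sigma$ to a polysimplex $\poly([\tau'];S_1,\dots,S_r)$, which accommodates this.)

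Here is how the paper closes the step you defer. Arrange that the polysimplex carrying $v,w_1,\dots,w_k$ is $\poly([\tau'];S_1,\dots,S_r)$ with $S_1=\{1,\wedge^2\}$, $[\tau]=[\tau'](\max(S_1)\oplus\cdots\oplus\max(S_r))$, and $w_1=[\tau](\vee^2)^{(1)}_Z$, and write $w_0=[\tau]\beta(z_1,\dots,z_m)((a^\ell,1)\vee)^{(i)}_Z$; right-multiplying by $(b^{-\ell})^{(i)}$ lets you take $\ell=0$. The only $2$-disk nested in $\mu(w_1)=\disk\bigl((\vee^2)^{(1)}\bigr)$ is $\disk\bigl(\vee^{(1)}\bigr)$, so $\disk\bigl(\vee^{(i)}\bigr)=\bigl(\disk\bigl(\vee^{(1)}\bigr)\bigr)\beta$, and this forces $w_0=[\tau](\zeta^\ell\vee)^{(1)}_Z$ for some $\ell$: once $\beta$ is required to carry one standard $2$-disk to another, the only ambiguity not absorbed by right multiplication by $B_{m-1}\wr Z$ (i.e.\ by $\sim_Z$, as in Lemma~\ref{lem:br_ro_well_def}) is braiding supported inside the disk, namely a power of $\zeta$. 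The residual power of $\zeta$ is then handled not by pinning $\ell$ down but by enlarging the polysimplex: since $\wedge\zeta^\ell\vee=a^\ell$, one gets $w_0=[\tau'](\wedge(a^\ell,1)\oplus\max(S_2)\oplus\cdots\oplus\max(S_r))(\zeta^\ell\vee)^{(1)}_Z$, so replacing $S_1$ by $S_1'=\{1,\wedge(a^\ell,1),\wedge^2\}$ produces a polysimplex $\poly([\tau'];S_1',S_2,\dots,S_r)$ containing $w_0$ and having the original one as a face, hence containing $v,w_0,w_1,\dots,w_k$. That explicit identification of $w_0$, together with the identity $\wedge\zeta^\ell\vee=a^\ell$, is the missing piece of your argument.
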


\begin{proof}
If $k=0$ then there is nothing to show, so assume $k>0$. By induction, $w_1,\dots,w_k$ span a simplex in $\dlmodel^m_Z$, call it $\sigma$. The image $\mu(\sigma)$ is represented by $k$ disks that are pairwise disjoint or nested, and such that the disk $\mu(w_0)$ is disjoint or nested with each of them. If $\mu(w_0)$ is disjoint from all the disks in $\mu(\sigma)$, then the argument in the proof of Proposition~\ref{prop:dlk_conn} shows that $w_0,\dots,w_k$ span a simplex in $\dlmodel^m_Z$. Now assume this is not the case, so without loss of generality $w_0$ has mass $2$, $w_1$ has mass $3$, $\mu(w_0)$ is nested in $\mu(w_1)$, and both $\mu(w_0)$ and $\mu(w_1)$ are disjoint from $\mu(w_i)$ for all $2\le i\le k$. Now view $\dlmodel^m_Z$ as $\dlk v$ for a vertex $v=[\tau]_Z$ with $h(v)=m$, so $\sigma$ is represented by a polysimplex containing $v,w_1,\dots,w_k$, with $v$ as its vertex maximizing $h$. Write this polysimplex as $\poly([\tau'];S_1,\dots,S_r)$, so $v=[\tau'](\max(S_1)\oplus\cdots\oplus\max(S_r))_Z$. We can assume $[\tau]=[\tau'](\max(S_1)\oplus\cdots\oplus\max(S_r))$. Without loss of generality $S_1=\{1,\wedge^2\}$ and
\[
w_1=[\tau'](1\oplus\max(S_2)\oplus\cdots\oplus\max(S_r))_Z = [\tau](\vee^2)^{(1)}_Z \text{.}
\]
Now say $w_0=[\tau]\beta(z_1,\dots,z_m)((a^\ell,1)\vee)^{(i)}_Z$ for some $\beta(z_1,\dots,z_m)\in B_m\wr Z$, some $\ell$, and some $i$. Up to $\sim_Z$ we can actually assume $\ell=0$, e.g, right multiply by $(b^{-\ell})^{(i)}$. Then $\mu(w_0)=\left(\disk(\vee^{(i)})\right)\beta^{-1}$, and this is disjoint from $\mu(w_2),\dots,\mu(w_k)$ and contained in $\mu(w_1)$. By the above equation for $w_1$ we have $\mu(w_1)=\disk\left((\vee^2)^{(1)}\right)$, so $\mu(w_0)=\disk\left(\vee^{(1)}\right)$, hence $\disk\left(\vee^{(i)}\right) = \left(\disk\left(\vee^{(1)}\right)\right)\beta$. This shows that in fact $w_0=[\tau](\zeta^\ell \vee)^{(1)}_Z$ for some $\ell$, hence
\[
w_0=[\tau'](\max(S_1)\oplus\cdots\oplus\max(S_r))(\zeta^\ell \vee)^{(1)}_Z \text{.}
\]
Since $\max(S_1)=\wedge^2$ and $\wedge\zeta^\ell \vee=a^\ell$, we conclude that
\[
w_0=[\tau'](\wedge(a^\ell,1)\oplus\cdots\oplus\max(S_r))(\zeta^\ell \vee)^{(1)}_Z \text{.}
\]
Now setting $S_1'=\{1,\wedge(a^\ell,1),\wedge^2\}$, we see that the polysimplex $\poly([\tau'];S_1',S_2,\dots,S_r)$ contains $w_0$. Since it has $\poly([\tau'];S_1,\dots,S_r)$ as a face it also contains $v,w_1,\dots,w_k$, so we conclude that $w_0,\dots,w_k$ span a simplex in $\dlk v$.
\end{proof}

Now that we know $\mu$ is a complete join, we can understand higher connectivity of $\dlmodel^m_Z$ by analyzing higher connectivity of $\diskcpx_{2,5/2}(\surface_m)$.

\begin{proposition}\label{prop:2_3_disk_conn}
The complex $\diskcpx_{2,5/2}(\surface_{b,m}^g)$ is wCM of dimension $\lfloor\frac{m+1}{3}\rfloor-1$.
\end{proposition}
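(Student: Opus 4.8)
The plan is to prove the statement by induction on $m$, uniformly over $b$ and $g$ (for $m\le 1$ the complex is empty and there is nothing to show), writing $n=\lfloor\frac{m+1}{3}\rfloor-1$ for the target dimension. Recall that a complex is wCM of dimension $n$ if and only if it is $(n-1)$-connected and the link of each of its vertices is wCM of dimension $n-1$, since the link of a $k$-simplex is the link of a $(k-1)$-simplex inside the link of a vertex. So it suffices to establish two things for $X=\diskcpx_{2,5/2}(\surface_{b,m}^g)$: that $X$ is $(n-1)$-connected, and that the link of every vertex of $X$ is wCM of dimension $n-1$; the second point is where the inductive hypothesis (applied to surfaces with fewer marked points) comes in, and it closes up because cutting a surface along an embedded disk changes $b$ and $g$ but the connectivity bounds only depend on $m$. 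Note that $2\cdot 2-1=3$, so the bound $\lfloor\frac{m+1}{2d-1}\rfloor-2$ from Citation~\ref{cit:disk_conn} for the ordinary disk complex $\diskcpx_2(\surface_{b,m}^g)$ is exactly $n-1$.

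For the $(n-1)$-connectivity I would compare $X$ with its full subcomplex $\diskcpx_2(\surface_{b,m}^g)$ spanned by the $2$-disks; since two $2$-disks can never be nested, this full subcomplex really is the disk complex governed by Citation~\ref{cit:disk_conn}. Inside every $5/2$-disk $E$ there is a $2$-disk $D_E$ enclosing the two marked points in the interior of $E$, unique up to isotopy because in a twice-punctured disk there is only one isotopy class of simple closed curve enclosing both punctures; this assignment, together with the identity on $2$-disks, is well defined on isotopy classes and defines a simplicial retraction $r\colon X\to\diskcpx_2(\surface_{b,m}^g)$ (the only $2$-disk nested in $E$ is $D_E$, which makes compatibility with the face relation routine). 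I would then check that for every simplex $\sigma$ of $X$ the set $\sigma\cup r(\sigma)$ again spans a simplex of $X$, so that $r$ is contiguous to the identity; hence $\diskcpx_2(\surface_{b,m}^g)\hookrightarrow X$ is a homotopy equivalence, and the desired $(n-1)$-connectivity follows from Citation~\ref{cit:disk_conn}.

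For the link of a vertex I would argue as follows. The link of a $5/2$-disk $E$ is the cone $\{D_E\}\ast\diskcpx_{2,5/2}(\surface\setminus E)$ over the disk complex of $\surface\setminus E=\surface_{b+1,m-3}^g$ (deleting a neighborhood of $E$ removes $D_E$'s two points and $E$'s boundary marked point), and a short direct argument using the inductive hypothesis for that complex shows this cone is wCM of dimension $n-1$. The link of a $2$-disk $D$ is $\diskcpx_{2,5/2}(\surface_{b+1,m-2}^g)$ with, for each $5/2$-disk $F\supseteq D$ (necessarily a neighborhood of $D$ joined by an arc to a marked point), a cone $\{F\}\ast\diskcpx_{2,5/2}(\surface\setminus F)$ attached along $\diskcpx_{2,5/2}(\surface\setminus F)$ with $\surface\setminus F=\surface_{b+1,m-3}^g$; distinct such $F$ are pairwise non-adjacent in this link, which is what pins the description down. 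Since $\lfloor\frac{m-1}{3}\rfloor-1\ge n-1$ and $\lfloor\frac{m-2}{3}\rfloor-1=n-1\ge n-2$, the inductive hypothesis makes the body $\diskcpx_{2,5/2}(\surface_{b+1,m-2}^g)$ wCM of dimension $\ge n-1$ and each coned-off subcomplex wCM of dimension $\ge n-2$, and a standard coning-off argument in the spirit of the connectivity lemmas of \cite{HW10} then gives that the whole link is wCM of dimension $n-1$. Combining the two steps yields that $X$ is wCM of dimension $n$.

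The hard part will be the link computation in the last step: one must use genuine surface-topology arguments to identify exactly which disks lie in the link of a $2$-disk $D$ and how the cones are glued on (in particular that a $2$-disk nested in a $5/2$-disk is forced to be the inner disk, and that the $5/2$-disks over $D$ are pairwise non-adjacent there), and then one needs a coning-off lemma robust enough to glue on the (infinitely many) cones while preserving wCM-ness rather than merely high connectivity. The small values of $m$ must also be checked by hand as the base of the induction, but there the dimension $n$ is at most $0$ and the assertions are vacuous or reduce to non-emptiness.
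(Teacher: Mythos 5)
Your proposal is correct in outline but takes a genuinely different route from the paper. The paper never reduces to vertex links: it first shows $\diskcpx_{2,5/2}(\surface_{b,m}^g)\simeq\diskcpx_2(\surface_{b,m}^g)$ by running discrete Morse theory on the barycentric subdivision with the function $(n_{5/2},-\dim)$ (your retraction $E\mapsto D_E$, contiguous to the identity, is a cleaner packaging of exactly this step, using the same pairing of a $5/2$-disk with its inner $2$-disk); it then attacks the link of an arbitrary simplex $\sigma$ directly, cutting along the $5/2$-disks of $\sigma$ to reduce to the case where $\sigma$ consists only of $2$-disks, and finally runs the same Morse function on $\lk\sigma$ to build up from the link in $\diskcpx_2$ to the link in $\diskcpx_{2,5/2}$, where the one non-contractible descending link is a join of a sphere with a $2$-disk complex of a cut surface and a floor-function estimate closes the argument. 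Your induction on $m$ instead pushes all the work into identifying the vertex links: the link of a $5/2$-disk as the cone $\{D_E\}\ast\diskcpx_{2,5/2}(\surface_{b+1,m-3}^g)$, and the link of a $2$-disk as $\diskcpx_{2,5/2}(\surface_{b+1,m-2}^g)$ with pairwise non-adjacent cones attached along full subcomplexes isomorphic to $\diskcpx_{2,5/2}(\surface_{b+1,m-3}^g)$ -- both identifications are correct, and I checked that your numerology closes the induction (indeed wCM of dimension $n-2$ for the coned-off subcomplexes already suffices). What your approach buys is a structurally transparent induction that avoids the barycentric-subdivision Morse function and only invokes Citation~\ref{cit:disk_conn} for the ambient surface; what it costs is that the gluing step you defer -- attaching infinitely many cones along wCM subcomplexes while preserving wCM-ness, including the links of simplices lying in several of the $Y_F$ simultaneously -- is precisely where the paper's descending face/coface link analysis lives, so the total effort is comparable. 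The one point to be careful about when you write it up is the standard simultaneous-minimal-position issue needed to see that the subcomplexes $Y_F$ are full in the body and that $\lk_{Y_F}(\tau)=Y_F\cap\lk_B(\tau)$; the paper makes the analogous identifications implicitly when cutting surfaces.
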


\begin{proof}
First we show that $\diskcpx_{2,5/2}(\surface_{b,m}^g)$ is homotopy equivalent to $\diskcpx_2(\surface_{b,m}^g)$. Given a $k$-simplex $\sigma=\{D_0,\dots,D_k\}$ in $\diskcpx_{2,5/2}(\surface_{b,m}^g)$, let $n_{5/2}(\sigma)$ be the number of $D_i$'s that are $5/2$-disks. Define a Morse function on the barycentric subdivision $\diskcpx_{2,5/2}(\surface_{b,m}^g)'$ by sending a $k$-simplex $\sigma$ to the ordered pair $(n_{5/2}(\sigma),-\dim(\sigma))$. This takes on finitely many values in $\R\times\R$, so viewed lexicographically the values are a finite totally ordered set, and adjacent (i.e., properly incident) simplices have different values, hence this really can be viewed as a Morse function. The sublevel complex of all $\sigma$ with $(n_{5/2}(\sigma),-\dim(\sigma))\le (0,0)$ equals $\diskcpx_2(\surface_{b,m}^g)$. In particular, if we can show that the descending link of any $\sigma$ with $(n_{5/2}(\sigma),-\dim(\sigma))>(0,0)$ is contractible, then Morse theory will tell us that the inclusion of $\diskcpx_2(\surface_{b,m}^g)$ into $\diskcpx_{2,5/2}(\surface_{b,m}^g)$ is a homotopy equivalence. (Intuitively, we attach the missing vertices in increasing order with respect to the Morse function, and at each stage we do so along a contractible descending link, so the homotopy type never changes.) To this end, suppose $(n_{5/2}(\sigma),-\dim(\sigma))>(0,0)$, and we need to prove the descending link of $\sigma$ is contractible. The descending link is a join, of the \emph{descending face link} spanned by all faces $\sigma^\vee<\sigma$ with $n_{5/2}(\sigma^\vee)<n_{5/2}(\sigma)$, and the \emph{descending coface link} spanned by all proper cofaces $\sigma^\wedge>\sigma$ with $n_{5/2}(\sigma^\wedge)=n_{5/2}(\sigma)$. For any $5/2$-disk $D$ there is a unique $2$-disk contained in it, which we will denote by $\delta(D)$. Let $\sigma=\{D_0,\dots,D_k\}$. First suppose that some $D_i$ is a $5/2$-disk such that $\delta(D_i)\not\in\sigma$. Then $\sigma\cup\{\delta(D_i)\}$ is a descending proper coface of $\sigma$, and moreover for any descending proper coface $\sigma^\wedge$ of $\sigma$ we have that $\sigma^\wedge \cup\{\delta(D_i)\}$ is also a descending proper coface. The map $\sigma^\wedge\mapsto \sigma^\wedge \cup\{\delta(D_i)\}$ thus induces a homotopy equivalence from the descending coface link to the (contractible) star of $\sigma\cup\{\delta(D_i)\}$, so the descending coface link is contractible (see, e.g., \cite[Section~1.5]{quillen78}). Now suppose whenever $D_i$ is a $5/2$-disk, $\delta(D_i)\in\sigma$. Let $\sigma_0<\sigma$ be the (non-empty) face of $\sigma$ consisting of all $2$-disks in $\sigma$. This is a descending face, and given any descending face $\sigma^\vee<\sigma$ we have that $\sigma^\vee \cup \sigma_0$ is again a descending face of $\sigma$. Hence the map $\sigma^\vee\mapsto \sigma^\vee \cup \sigma_0$ induces a homotopy equivalence from the descending face link to the (contractible) star of $\sigma_0$, so the descending face link is contractible.

We have shown that $\diskcpx_{2,5/2}(\surface_{b,m}^g)$ is homotopy equivalent to $\diskcpx_2(\surface_{b,m}^g)$, which is $(\left\lfloor\frac{m+1}{3}\right\rfloor-2)$-connected by Citation~\ref{cit:disk_conn}. Since higher connectivity of links is not necessarily preserved under homotopy equivalence, we have more work to do before we can conclude that $\diskcpx_{2,5/2}(\surface_{b,m}^g)$ is wCM of dimension $\left\lfloor\frac{m+1}{3}\right\rfloor-1$. Let $\sigma=\{D_0,\dots,D_k\}$ be a $k$-simplex in $\diskcpx_{2,5/2}(\surface_{b,m}^g)$, and consider its link $L\defeq \lk \sigma$ in $\diskcpx_{2,5/2}(\surface_{b,m}^g)$. Working with the barycentric subdivision as in the first paragraph, we can view $L$ as the subcomplex spanned by all proper cofaces $\tau$ of $\sigma$. If some $D_i$ is a $5/2$-disk such that $\delta(D_i)\not\in\sigma$, then the map $\tau\mapsto \tau \cup\{\delta(D_i)\}$ induces a homotopy equivalence from $L$ to the star of the proper coface $\sigma\cup\{\delta(D_i)\}$ in $L$, hence $L$ is contractible. Now assume whenever $D_i$ is a $5/2$-disk, $\delta(D_i)\in\sigma$. Call $D_i$ and $\delta(D_i)$ \emph{paired} in this case. If $D_i$ is a vertex of $\sigma$ that is not paired, call it \emph{solitary} (and note that the solitary vertices must be $2$-disks). Say $2p$ is the number of paired vertices in $\sigma$, so $k-2p+1$ is the number of solitary vertices. Consider the surface $\surface_{b+p,m-3p}^g$ obtained by cutting along the boundary and removing the interior (and un-marking the marked point on the boundary) of each $5/2$-disk in $\sigma$. Let $\overline{\sigma}$ be the (possibly empty) $(k-2p)$-simplex in $\diskcpx_{2,5/2}(\surface_{b+p,m-3p}^g)$ whose vertices are the solitary vertices of $\sigma$, now viewed as disks in $\surface_{b+p,m-3p}^g$. It is clear that $L$ is isomorphic to the link of $\overline{\sigma}$ in $\diskcpx_{2,5/2}(\surface_{b+p,m-3p}^g)$ (which if $\overline{\sigma}$ is empty means the whole complex). If the desired result holds for $\overline{\sigma}$ and $\surface_{b+p,m-3p}^g$, then the link of $\overline{\sigma}$ in $\diskcpx_{2,5/2}(\surface_{b+p,m-3p}^g)$ (and hence also $L$) is $\left(\left\lfloor\frac{(m-3p)+1}{3}\right\rfloor-(k-2p)-3\right)$-connected, hence $(\left\lfloor\frac{m+1}{3}\right\rfloor-k-3)$-connected, i.e., the desired result holds for $\sigma$ and $\surface_{b,m}^g$. Thus it remains only to prove the result in the case when all the vertices of $\sigma$ are solitary.

Since we have reduced to a case where all the vertices of $\sigma$ are $2$-disks, we can consider the subcomplex $L_2$ of $L$ that is the link of $\sigma$ in the subcomplex $\diskcpx_2(\surface_{b,m}^g)$ of $\diskcpx_{2,5/2}(\surface_{b,m}^g)$. Since $\diskcpx_2(\surface_{b,m}^g)$ is wCM of dimension $\left\lfloor\frac{m+1}{3}\right\rfloor-1$ by Corollary~\ref{cor:disk_conn}, we know $L_2$ is $(\left\lfloor\frac{m+1}{3}\right\rfloor-k-3)$-connected. Now we will build up from $L_2$ to $L$ by gluing in the missing simplices $\tau$. We can again use the Morse function $(n_{5/2},-\dim)$, now restricted to $L$, since $L_2$ is precisely the sublevel complex of $L$ defined by $(n_{5/2},-\dim)\le (0,0)$. If we can show that the descending link of any simplex in $L\setminus L_2$ is $(\left\lfloor\frac{m+1}{3}\right\rfloor-k-4)$-connected then discrete Morse theory (see, e.g., \cite[Corollary~2.6]{bestvina97}) will tell us that $L$ is $(\left\lfloor\frac{m+1}{3}\right\rfloor-k-3)$-connected and we will be done. The argument from the first paragraph works in exactly the same way to show that the descending link in $L$ of a simplex $\tau$ in $L\setminus L_2$ is contractible, except in one case, namely when every disk in $\tau\setminus\sigma$ is a $5/2$-disk that is paired with a $2$-disk in $\sigma$; see Figure~\ref{fig:link_nested_disks} for a visualization of this situation. In this case the descending face link of $\tau$ in $L$ is the entire boundary of $\tau$ (since removing any proper subset of disks in $\tau\setminus\sigma$ is a descending move), so if $\tau$ is an $\ell$-simplex in $L$ (i.e., a $(k+\ell+1)$-simplex in $\diskcpx_{2,5/2}(\surface_{b,m}^g)$ containing the $k$-simplex $\sigma$), then the descending face link of $\tau$ in $L$ is an $(\ell-1)$-sphere, hence $(\ell-2)$-connected. In this case we can also understand the descending coface link of $\tau$ in $L$: it is isomorphic to the $2$-disk complex on the surface obtained by cutting out all the disks in $\tau$. Since each disk in $\tau\setminus\sigma$ contains a disk in $\sigma$, this produces $k+1$ new boundary components and eliminates $2(k+1)+(\ell+1)=2k+\ell+3$ marked points, so this surface is $\surface_{b+k+1,m-2k-\ell-3}^g$. The $2$-disk complex of this surface, and hence the descending coface link of $\tau$, is $\left(\left\lfloor\frac{(m-2k-\ell-3)+1}{3}\right\rfloor-2\right)$-connected. Joining this with the descending face link, which is $(\ell-2)$-connected, we get that the descending link of $\tau$ is $\left(\left\lfloor\frac{(m-2k-\ell-3)+1}{3}\right\rfloor+\ell-2\right)$-connected. We need it to be $\left(\left\lfloor\frac{m+1}{3}\right\rfloor-k-4\right)$-connected, so it suffices to prove that $\left\lfloor\frac{m+1}{3}\right\rfloor-k-1 \le \left\lfloor\frac{m+1}{3} - \frac{2k+\ell}{3}\right\rfloor+\ell$. This is indeed true by the following calculation:
\begin{align*}
\left\lfloor\frac{m+1}{3}\right\rfloor-k-1 &= \left\lfloor\frac{m+1}{3} - \frac{3k+3}{3}\right\rfloor \\
&\le \left\lfloor\frac{m+1}{3} - \frac{2k-2\ell}{3}\right\rfloor \\
&\le \left\lfloor\frac{m+1}{3} - \frac{2k+\ell}{3}\right\rfloor+\ell
\end{align*}
\end{proof}

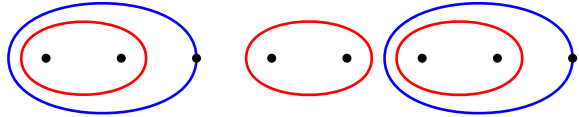
\begin{figure}[htb]
\centering
\begin{tikzpicture}
  \draw[red,line width=1pt]
  (-.33,0) to[out=90,in=90] (1.33,0)
  (-.33,0) to[out=-90,in=-90] (1.33,0);
  
  \draw[blue,line width=1pt]
  (-0.5,0) to[out=90,in=90] (2,0)
  (-0.5,0) to[out=-90,in=-90] (2,0);
  
  \draw[red,line width=1pt]
  (2.66,0) to[out=90,in=90] (4.33,0)
  (2.66,0) to[out=-90,in=-90] (4.33,0);
  
  \draw[red,line width=1pt]
  (4.66,0) to[out=90,in=90] (6.33,0)
  (4.66,0) to[out=-90,in=-90] (6.33,0);
  
  \draw[blue,line width=1pt]
  (4.5,0) to[out=90,in=90] (7,0)
  (4.5,0) to[out=-90,in=-90] (7,0);
  
  \filldraw
   (0,0) circle (1.5pt)
   (1,0) circle (1.5pt)
   (2,0) circle (1.5pt)
   (3,0) circle (1.5pt)
   (4,0) circle (1.5pt)
   (5,0) circle (1.5pt)
   (6,0) circle (1.5pt)
   (7,0) circle (1.5pt);
\end{tikzpicture}
\caption{An example of the last case in the proof of Proposition~\ref{prop:2_3_disk_conn} (for the surface $\surface_8$). The disks bounded in red comprise $\sigma$ and the disks bounded in blue comprise $\tau\setminus\sigma$. Hence $\sigma$ is a $2$-simplex and $\tau$ is a $4$-simplex in $\diskcpx_{2,5/2}(\surface_8)$ containing $\sigma$, so $\tau$ represents a $1$-simplex in $L=\lk\sigma$.}
\label{fig:link_nested_disks}
\end{figure}

Now we can prove the main result of this section, that $\brR$ is of type $\F_\infty$.

\begin{proof}[Proof of Theorem~\ref{thrm:braided_roever_F_infty}]
By Citation~\ref{cit:brown_morse} and Corollary~\ref{cor:br_ro_action}, it suffices to prove that the descending links of vertices in $\Stein_Z$ get arbitrarily highly connected as their number of feet goes to $\infty$. This follows from Citation~\ref{cit:join}, Proposition~\ref{prop:br_ro_complete_join}, and Proposition~\ref{prop:2_3_disk_conn}.
\end{proof}

\bibliographystyle{alpha}

\begin{thebibliography}{BZFG{\etalchar{+}}18}

\bibitem[AC22]{aroca22}
Julio Aroca and Mar\'{\i}a Cumplido.
\newblock A new family of infinitely braided {T}hompson's groups.
\newblock {\em J. Algebra}, 607(part B):5--34, 2022.

\bibitem[All21]{allcock21}
Daniel Allcock.
\newblock Most big mapping class groups fail the {T}its alternative.
\newblock {\em Algebr. Geom. Topol.}, 21(7):3675--3688, 2021.

\bibitem[BB97]{bestvina97}
Mladen Bestvina and Noel Brady.
\newblock Morse theory and finiteness properties of groups.
\newblock {\em Invent. Math.}, 129(3):445--470, 1997.

\bibitem[BBCS08]{brady08}
Tom Brady, Jos\'{e} Burillo, Sean Cleary, and Melanie Stein.
\newblock Pure braid subgroups of braided {T}hompson's groups.
\newblock {\em Publ. Mat.}, 52(1):57--89, 2008.

\bibitem[BFM{\etalchar{+}}16]{bux16}
Kai-Uwe Bux, Martin~G. Fluch, Marco Marschler, Stefan Witzel, and Matthew C.~B.
  Zaremsky.
\newblock The braided {T}hompson's groups are of type {$\rm F_\infty$}.
\newblock {\em J. Reine Angew. Math.}, 718:59--101, 2016.
\newblock With an appendix by Zaremsky.

\bibitem[BG84]{brown84}
Kenneth~S. Brown and Ross Geoghegan.
\newblock An infinite-dimensional torsion-free $\mathrm{FP}_\infty$ group.
\newblock {\em Inventiones mathematicae}, 77(2):367--381, 1984.

\bibitem[BGdlH13]{benli13}
Mustafa~G\"{o}khan Benli, Rostislav Grigorchuk, and Pierre de~la Harpe.
\newblock Amenable groups without finitely presented amenable covers.
\newblock {\em Bull. Math. Sci.}, 3(1):73--131, 2013.

\bibitem[Bir75]{Bir74}
Joan~S. Birman.
\newblock {\em Erratum: ``{\it {B}raids, links, and mapping class groups}''
  ({A}nn. of {M}ath. {S}tudies, {N}o. 82, {P}rinceton {U}niv. {P}ress,
  {P}rinceton, {N}. {J}., 1974)}.
\newblock Princeton University Press, Princeton, N. J.; University of Tokyo
  Press, Toyko, 1975.
\newblock Based on lecture notes by James Cannon.

\bibitem[BM16]{belk16}
James Belk and Francesco Matucci.
\newblock R\"over's simple group is of type {$\text{F}_\infty$}.
\newblock {\em Publ. Mat.}, 60(2):501--524, 2016.

\bibitem[Bri04]{brin04}
Matthew~G. Brin.
\newblock Higher dimensional {T}hompson groups.
\newblock {\em Geom. Dedicata}, 108:163--192, 2004.

\bibitem[Bri07]{brin07}
Matthew~G. Brin.
\newblock The algebra of strand splitting. {I}. {A} braided version of
  {T}hompson's group {$V$}.
\newblock {\em J. Group Theory}, 10(6):757--788, 2007.

\bibitem[Bro87]{brown87}
Kenneth~S. Brown.
\newblock Finiteness properties of groups.
\newblock In {\em Proceedings of the {N}orthwestern conference on cohomology of
  groups ({E}vanston, {I}ll., 1985)}, volume~44, pages 45--75, 1987.

\bibitem[Bro21]{brothier21}
Arnaud Brothier.
\newblock Classification of {T}hompson related groups arising from {J}ones'
  technology {II}.
\newblock {\em Bull. Soc. Math. France}, 149(4):663--725, 2021.

\bibitem[BZ]{bashwinger}
Eli Bashwinger and Matthew C.~B. Zaremsky.
\newblock Von {N}eumann algebras of {T}hompson-like groups from cloning
  systems.
\newblock {\em J. Operator Theory}.
\newblock To appear. arXiv:2104.04826.

\bibitem[BZFG{\etalchar{+}}18]{berns-zieve18}
Rose Berns-Zieve, Dana Fry, Johnny Gillings, Hannah Hoganson, and Heather
  Mathews.
\newblock Groups with context-free co-word problem and embeddings into
  {T}hompson's group {$V$}.
\newblock In {\em Topological methods in group theory}, volume 451 of {\em
  London Math. Soc. Lecture Note Ser.}, pages 19--37. Cambridge Univ. Press,
  Cambridge, 2018.

\bibitem[Deg00]{degenhardt00}
Franz Degenhardt.
\newblock {\em Endlichkeitseigenschaften gewisser {G}ruppen von {Z}\"opfen
  unendlicher {O}rdnung}.
\newblock PhD thesis, Frankfurt, 2000.

\bibitem[Deh06]{dehornoy06}
Patrick Dehornoy.
\newblock The group of parenthesized braids.
\newblock {\em Adv. Math.}, 205(2):354--409, 2006.

\bibitem[dlH00]{dlHar00}
Pierre de~la Harpe.
\newblock {\em Topics in geometric group theory}.
\newblock Chicago Lectures in Mathematics. University of Chicago Press,
  Chicago, IL, 2000.

\bibitem[Far03]{farley03}
Daniel~S. Farley.
\newblock Finiteness and {$\rm CAT(0)$} properties of diagram groups.
\newblock {\em Topology}, 42(5):1065--1082, 2003.

\bibitem[FH15]{farley15}
Daniel~S. Farley and Bruce Hughes.
\newblock Finiteness properties of some groups of local similarities.
\newblock {\em Proc. Edinb. Math. Soc. (2)}, 58(2):379--402, 2015.

\bibitem[FK08]{funar08}
Louis Funar and Christophe Kapoudjian.
\newblock The braided {P}tolemy-{T}hompson group is finitely presented.
\newblock {\em Geom. Topol.}, 12(1):475--530, 2008.

\bibitem[FK11]{funar11}
Louis Funar and Christophe Kapoudjian.
\newblock The braided {P}tolemy-{T}hompson group is asynchronously combable.
\newblock {\em Comment. Math. Helv.}, 86(3):707--768, 2011.

\bibitem[FMWZ13]{fluch13}
Martin~G. Fluch, Marco Marschler, Stefan Witzel, and Matthew C.~B. Zaremsky.
\newblock The {B}rin--{T}hompson groups {$sV$} are of type {$\text{F}_\infty$}.
\newblock {\em Pacific J. Math.}, 266(2):283--295, 2013.

\bibitem[Geo08]{geoghegan08}
Ross Geoghegan.
\newblock {\em Topological methods in group theory}, volume 243 of {\em
  Graduate Texts in Mathematics}.
\newblock Springer, New York, 2008.

\bibitem[GLU22]{genevois22}
Anthony Genevois, Anne Lonjou, and Christian Urech.
\newblock Asymptotically rigid mapping class groups, {I}: {F}initeness
  properties of braided {T}hompson's and {H}oughton's groups.
\newblock {\em Geom. Topol.}, 26(3):1385--1434, 2022.

\bibitem[Gri80]{grigorchuk80}
R.~I. Grigor\v{c}uk.
\newblock On {B}urnside's problem on periodic groups.
\newblock {\em Funktsional. Anal. i Prilozhen.}, 14(1):53--54, 1980.

\bibitem[Gri84]{grigorchuk84}
R.~I. Grigorchuk.
\newblock Degrees of growth of finitely generated groups and the theory of
  invariant means.
\newblock {\em Izv. Akad. Nauk SSSR Ser. Mat.}, 48(5):939--985, 1984.

\bibitem[Gri85]{grigorchuk85}
R.~I. Grigorchuk.
\newblock Degrees of growth of {$p$}-groups and torsion-free groups.
\newblock {\em Mat. Sb. (N.S.)}, 126(168)(2):194--214, 286, 1985.

\bibitem[Har81]{Hav81}
W.~J. Harvey.
\newblock Boundary structure of the modular group.
\newblock In {\em Riemann surfaces and related topics: {P}roceedings of the
  1978 {S}tony {B}rook {C}onference ({S}tate {U}niv. {N}ew {Y}ork, {S}tony
  {B}rook, {N}.{Y}., 1978)}, volume~97 of {\em Ann. of Math. Stud.}, pages
  245--251. Princeton Univ. Press, Princeton, N.J., 1981.

\bibitem[HW10]{HW10}
Allen Hatcher and Nathalie Wahl.
\newblock Stabilization for mapping class groups of 3-manifolds.
\newblock {\em Duke Math. J.}, 155(2):205--269, 2010.

\bibitem[Ish18]{ishida18}
Tomohiko Ishida.
\newblock Orderings of {W}itzel-{Z}aremsky-{T}hompson groups.
\newblock {\em Comm. Algebra}, 46(9):3806--3809, 2018.

\bibitem[LB17]{leboudec17}
Adrien Le~Boudec.
\newblock Compact presentability of tree almost automorphism groups.
\newblock {\em Ann. Inst. Fourier (Grenoble)}, 67(1):329--365, 2017.

\bibitem[Nek04]{nekrashevych04}
Volodymyr~V. Nekrashevych.
\newblock Cuntz-{P}imsner algebras of group actions.
\newblock {\em J. Operator Theory}, 52(2):223--249, 2004.

\bibitem[Nek05]{nekrashevych05}
Volodymyr Nekrashevych.
\newblock {\em Self-similar groups}, volume 117 of {\em Mathematical Surveys
  and Monographs}.
\newblock American Mathematical Society, Providence, RI, 2005.

\bibitem[Qui78]{quillen78}
Daniel Quillen.
\newblock Homotopy properties of the poset of nontrivial {$p$}-subgroups of a
  group.
\newblock {\em Adv. in Math.}, 28(2):101--128, 1978.

\bibitem[R{\"{o}}v99]{roever99}
Claas~E. R{\"{o}}ver.
\newblock Constructing finitely presented simple groups that contain
  {G}rigorchuk groups.
\newblock {\em J. Algebra}, 220(1):284--313, 1999.

\bibitem[R{\"{o}}v02]{roever02}
Claas~E. R{\"{o}}ver.
\newblock Abstract commensurators of groups acting on rooted trees.
\newblock In {\em Proceedings of the {C}onference on {G}eometric and
  {C}ombinatorial {G}roup {T}heory, {P}art {I} ({H}aifa, 2000)}, volume~94,
  pages 45--61, 2002.

\bibitem[Spa]{spahn}
Robert Spahn.
\newblock Braided {B}rin-{T}hompson groups.
\newblock Submitted. arXiv:2101.03462.

\bibitem[Ste92]{stein92}
Melanie Stein.
\newblock Groups of piecewise linear homeomorphisms.
\newblock {\em Trans. Amer. Math. Soc.}, 332(2):477--514, 1992.

\bibitem[SW]{skipperwu}
Rachel Skipper and Xiaolei Wu.
\newblock Finiteness properties for relatives of braided {H}igman-{T}hompson
  groups.
  \newblock {\em Groups Geom. Dyn}.
\newblock To appear. arXiv:2103.14589v2.

\bibitem[SWZ19]{skipper19}
Rachel Skipper, Stefan Witzel, and Matthew C.~B. Zaremsky.
\newblock Simple groups separated by finiteness properties.
\newblock {\em Invent. Math.}, 215(2):713--740, 2019.

\bibitem[SZ21]{skipper21}
Rachel Skipper and Matthew C.~B. Zaremsky.
\newblock Almost-automorphisms of trees, cloning systems and finiteness
  properties.
\newblock {\em J. Topol. Anal.}, 13(1):101--146, 2021.

\bibitem[Thu17]{thumann17}
Werner Thumann.
\newblock Operad groups and their finiteness properties.
\newblock {\em Adv. Math.}, 307:417--487, 2017.

\bibitem[Wit19]{witzel19}
Stefan Witzel.
\newblock Classifying spaces from {O}re categories with {G}arside families.
\newblock {\em Algebr. Geom. Topol.}, 19(3):1477--1524, 2019.

\bibitem[WZ18]{witzel18}
Stefan Witzel and Matthew Zaremsky.
\newblock Thompson groups for systems of groups, and their finiteness
  properties.
\newblock {\em Groups Geom. Dyn.}, 12(1):289--358, 2018.

\bibitem[Zar21]{zaremsky21}
Matthew C.~B. Zaremsky.
\newblock A short account of why {T}hompson's group {$F$} is of type {$\rm
  F_\infty$}.
\newblock {\em Topology Proc.}, 57:77--86, 2021.

\end{thebibliography}
\newcommand{\etalchar}[1]{$^{#1}$}

\end{document}